\newcommand\edge{\mathrel{\boldsymbol{\cdot}\mkern-4mu{-}\mkern-4mu\boldsymbol{\cdot}}}
\newtheorem{theorem}{Theorem}[section]
\newtheorem{lemma}[theorem]{Lemma}
\newtheorem{proposition}[theorem]{Proposition}
\newtheorem{corollary}[theorem]{Corollary}
\theoremstyle{definition}
\newtheorem{question}[theorem]{Question}
\newcommand{\N}{\ensuremath{\mathbb{N}}}
\newcommand{\R}{\ensuremath{\mathbb{R}}}
\newcommand{\T}{\ensuremath{\mathbb{T}}}
\newcommand{\Z}{\ensuremath{\mathbb{Z}}}
\newcommand{\cU}{\ensuremath{\mathcal{U}}}
\newcommand{\vr}[2]{\mathrm{VR}(#1;#2)}
\newcommand{\cl}{\ensuremath{\mathrm{Cl}}} 
\newcommand{\diam}{\ensuremath{\mathrm{diam}}} 
\newcommand\dv{{:}} 
\newcommand{\note}[1]{\textcolor{blue}{({#1})}}
\numberwithin{equation}{section}
\newcolumntype{?}{!{\vrule width 1.7pt}}
\title{Vietoris--Rips complexes of torus grids}
\author{Henry Adams}
\address{Department of Mathematics\\University of Florida\\Gainesville, FL~32611\\USA}
\email{henry.adams@ufl.edu}
\author{Adenike Yeside Adetowubo}
\address{School of Mathematical Sciences\\Hebei Normal University\\Shijiazhuang\\Hebei, 050024\\China}
\email{adetowubonikky@yahoo.com}
\author{Hector Barriga-Acosta}
\address{I Am Legacy\\Charlotte, NC\\USA}
\email{hector@iam-legacy.com}
\author{Ziqin Feng}
\address{Department of Mathematics and Statistics\\Auburn University\\Auburn, AL~36849\\USA}
\email{zzf0006@auburn.edu}
\author{John Sterling}
\address{Department of Mathematics and Statistics\\Auburn University\\Auburn, AL~36849\\USA}
\email{jss0122@auburn.edu}
\subjclass{55N31, 
05E45, 
05C69
}
\keywords{Vietoris--Rips simplicial complexes, homotopy types, homology, nerve lemma, $l^1$ metric, torus grid}
\begin{document}

\begin{abstract}
We study the topology of Vietoris--Rips complexes of finite grids on the torus.
Let $T_{n,n}$ be the grid of $n\times n$ points on the flat torus $S^1\times S^1$, equipped with the $l^1$ metric.
Let $\vr{T_{n,n}}{k}$ be the Vietoris--Rips simplicial complex of this torus grid at scale $k\ge 0$.
For $n\ge 7$ and small scales $2\le k\le \frac{n-1}{3}$, the complex $\vr{T_{n,n}}{k}$ is homotopy equivalent to the torus.
For large scales $k\ge 2\lfloor\frac{n}{2}\rfloor$, the complex $\vr{T_{n,n}}{k}$ is a simplex and hence contractible.
Interesting topology arises over intermediate scales $\frac{n-1}{3}<k<2\lfloor\frac{n}{2}\rfloor$.
For example, we prove that $\vr{T_{2n,2n}}{2n-1}\cong S^{2n^2-1}$ for $n\ge 2$, that $\vr{T_{3n,3n}}{n}\simeq\vee^{6n^2-1}S^2$ for $n\ge 2$, and that $\vr{T_{3n-1,3n-1}}{n}\simeq \bigvee_{6n-3} S^2\vee \bigvee_{6n-2}S^3$ for $n\geq 3$.
Based on homology computations, we conjecture that $\vr{T_{n,n}}{k}$ is homotopy equivalent to a 3-sphere for a countable family of $(n,k)$ pairs, and we prove this for $(n,k)=(7,4)$.
\end{abstract}

\maketitle

\pagenumbering{roman}
\pagenumbering{arabic}

\section{Introduction}
\label{sec:intro}

The Vietoris--Rips complex originates in the work of Leopold Vietoris~\cite{Vietoris27}, a pioneer in the study of homology.
In 1927, he introduced Vietoris--Rips complexes to construct a canonical simplicial complex on top of any metric space.
Later, Elihayu Rips and Mikhail Gromov~\cite{Gromov1987} used Vietoris--Rips complexes to study hyperbolic groups in geometric group theory.
More recently, Vietoris--Rips complexes have been used in computational algebraic topology and topological data analysis to process point cloud data~\cite{Carlsson2009,EdelsbrunnerHarer,edelsbrunner2013persistent,dey2022computational,zomorodian2009computational}, based on the existence of fast algorithms to compute persistent homology~\cite{bauer2021ripser,zomorodian2010fast,zomorodian2005computing}.
The study of Vietoris--Rips complexes is finding connections to many areas of geometry and topology.

For any metric space $X$ and a scale parameter $r\ge 0$, the Vietoris--Rips complex $\vr{X}{r}$ is a simplicial complex on the vertex set $X$, which has as its simplices the finite subsets of $X$ of diameter at most $r$.
The Vietoris--Rips complex is an example of a clique complex because it is the maximal simplicial complex determined by its 1-skeleton.
Part of the applicability of Vietoris--Rips complexes comes from the work of Hausmann~\cite{Hausmann1995} and Latschev~\cite{Latschev2001}.
Hausmann proved that if $M$ is a Riemann manifold and the scale $r$ is sufficiently small, then $\vr{M}{r}$ is homotopy equivalent to $M$.
Latschev also proved that if $X$ is Gromov--Hausdorff close to $M$ and $r$ is in the correct range, then $\vr{X}{r}$ remains homotopy equivalent to $M$.

Let us now focus attention on the $m$-dimensional torus $\T^m = (S^1)^m = 
S^1 \times \ldots \times S^1
$.
There are a variety of metrics that one can put on the torus, including the $l^p$ metric for any $1\le p\le \infty$.
The most natural metric, in our opinion, is the $l^2$ metric; indeed, $(\T^m,l^2)$ is the flat torus, a Riemannian manifold with constant curvature zero.
Question~2 in Section~2 of~\cite{gasarch2017open} asks about the homotopy types of Vietoris--Rips complexes of the flat torus $(\T^m,l^2)$.
To our knowledge, the only known result is that $\vr{(\T^m,l^2)}{r}$ is homotopy equivalent to the torus $\T^m$ for all scale parameters $r$ that are sufficiently small, by Hausmann's theorem~\cite{Hausmann1995}.

When the $m$-dimensional torus is equipped with the $l^\infty$ metric, and when each geodesic circle has unit circumference, it is known from~\cite[Proposition~10.2]{AA-VRS1} that $\vr{(\T^m,l^\infty)}{r}$ is a product of odd-dimensional spheres, namely
\[\vr{(\T^m,l^\infty)}{r} \simeq \underbrace{\vr{S^1}{r} \times \ldots \times \vr{S^1}{r}}_{m\text{ times}} \simeq \underbrace{S^{2k+1} \times \ldots \times S^{2k+1}}_{m\text{ times}}\]
for all $\frac{k}{2k+1}<r<\frac{k+1}{2k+3}$.
An analogous formula is true if the different circular factors forming the $m$-dimensional torus have different circumferences; in this case the Vietoris--Rips complex of the $m$-torus with the $l^\infty$ metric could be homotopy equivalent to a product of odd-dimensional spheres of different dimensions.

In this paper, we instead consider the $l^1$ metric.
Part of our motivation for considering the $l^1$ metric is the following.
Though we think of the $l^1$ and $l^\infty$ metrics on the torus $\T^2$ as being less natural than the $l^2$ metric, if we can understand the inclusion map $\vr{(\T^2,l^1)}{r}\hookrightarrow \vr{(\T^2,l^\infty)}{r}$, then since this inclusion factors as
\[\vr{(\T^2,l^1)}{r}\hookrightarrow \vr{(\T^2,l^2)}{r}\hookrightarrow \vr{(\T^2,l^\infty)}{r},\]
we can deduce properties of the intermediate space $\vr{(\T^2,l^2)}{r}$.
For example, if we can show that the inclusion map $\vr{(\T^2,l^1)}{r}\hookrightarrow \vr{(\T^2,l^\infty)}{r}$ is nonzero on the $k$-th homotopy group $\pi_k$, then it follows that $\pi_k(\vr{(\T^2,l^\infty)}{r})\neq 0$, and similarly for homology groups.\footnote{See the poster~\cite{coyle2025vietoris} for some initial computations of the homology of grid graphs in the torus with the $l^2$ metric.}
A second motivation for considering the $l^1$ metric is that it is the natural (shortest path) metric to put on the grid graphs $T_{n,n}$, which approximate the torus $\T^2$, as we describe in the next paragraph.
In this paper, we also focus on the $2$-dimensional torus $\T^2 = S^1 \times S^1$, as this case is already complicated and points to phenomena that we expect to appear again with the $m$-dimensional torus $\T^m$.

We study the homotopy types and homology groups of Vietoris--Rips complexes of a finite grid of points on the torus.
Let $T_{n,n}$ be the $n\times n$ grid graph on a torus $T_{n,n}\subseteq S^{1}\times S^{1}$.
In other words, $T_{n,n}$ is the product of the cyclic graph $C_{n}$ with itself.
We consider the homotopy types and homology groups of its Vietoris--Rips complexes $\vr{T_{n,n}}{k}$ at a growing scale parameter $k\ge 0$.
Here, the input metric space into the Vietoris--Rips construction is the vertex set of the graph $T_{n,n}$, equipped with the $l^1$ metric.
We think of the $l^1$ metric as a natural metric on the vertex set of the grid graph $T_{n,n}$: it is the same as the shortest path metric on the natural grid graph with $T_{n,n}$ as its vertex set.
We note that as $n\to \infty$, the persistent homology of $\vr{T_{n,n}}{\frac{\bullet}{n}}$ converges to the persistent homology of $\vr{(\T^2,l^1)}{\bullet}$ by the stability of persistent homology~\cite{ChazalDeSilvaOudot2014}, where $(\T^2,l^1)$ is the $l^1$ metric product of two geodesic circles each of circumference one.

Table~\ref{table:Torus-Grids} shows known homotopy types and computed homology groups of the Vietoris--Rips complexes $\vr{T_{n,n}}{k}$ of $n\times n$ grids on the torus.
We write $\T^2$ in Table~\ref{table:Torus-Grids} to mean that the complex has homotopy type of the torus.
We write $\beta_i=d$ as shorthand for $H_i(\vr{T_{n,n}}{k};\Z/2)\cong (\Z/2)^d$.
The homology computations, performed using $\Z/2$ coefficients in Ripser~\cite{bauer2021ripser}, were carried out on Auburn University's Easley cluster.

\renewcommand{\arraystretch}{1.13}
\begin{table}[h]
\setlength\extrarowheight{0.6pt}
\begin{center}
\begin{adjustwidth}{-.58in}{-.5in} 
\scalebox{0.70}{
\begin{tabular}{? >{$} c <{$} ? >{$} c <{$} | >{$} c <{$} | >{$} c <{$} | >{$} c <{$} | >{$} c <{$} | >{$} c <{$} | >{$} c <{$} | >{$} c <{$} | >{$} c <{$} ?}
\Xhline{1.7pt}
& k=1& 2 & 3 & 4 & 5 & 6 & 7 & 8 & 9 \\
\Xhline{1.7pt}
n=3 & \vee^4 S^1 & \cellcolor{lightgray!50} * & \cellcolor{lightgray!50} * & \cellcolor{lightgray!50} * & \cellcolor{lightgray!50} * & \cellcolor{lightgray!50} * & \cellcolor{lightgray!50} * & \cellcolor{lightgray!50} * & \cellcolor{lightgray!50} * \\
\hline
4 & \vee^{17} S^1 & \vee^{9}S^3 & \cellcolor{RoyalBlue!55}S^7 & \cellcolor{lightgray!50} * & \cellcolor{lightgray!50} * & \cellcolor{lightgray!50} * & \cellcolor{lightgray!50} * & \cellcolor{lightgray!50} * & \cellcolor{lightgray!50} * \\
\hline
5 & \vee^{26}S^1 & \cellcolor{teal!50}\bigvee^{9}S^2 & \vee^{9}S^4 & \cellcolor{lightgray!50} * & \cellcolor{lightgray!50} * & \cellcolor{lightgray!50} * & \cellcolor{lightgray!50} * & \cellcolor{lightgray!50} * & \cellcolor{lightgray!50} * \\
\hline
6 & \vee^{37}S^1 &\cellcolor{purple!60}\vee^{23}S^2 & 3\dv 1, 5\dv 12 & 5\dv 23, 8\dv 2 & \cellcolor{RoyalBlue!55}S^{17} & \cellcolor{lightgray!50} * & \cellcolor{lightgray!50} * & \cellcolor{lightgray!50} * & \cellcolor{lightgray!50} * \\
\hline
7 & \vee^{50}S^1 & \cellcolor{lightgray!50}\T^2 & 3\dv 1, 4\dv 14 & S^3 & 11\dv 1 & \cellcolor{lightgray!50} * & \cellcolor{lightgray!50} * & \cellcolor{lightgray!50} * & \cellcolor{lightgray!50} * \\
\hline
8 & \vee^{65}S^1 & \cellcolor{lightgray!50}\T^2 & \cellcolor{teal!50}\bigvee^{15}S^2\vee\bigvee^{16}S^3 & 3\dv 1, 7\dv 16\ (12\dv0) & 3\dv 1\ (12\dv0) & 12\dv 259 & \cellcolor{RoyalBlue!55}S^{31} & \cellcolor{lightgray!50} * & \cellcolor{lightgray!50} * \\
\hline
9 & \vee^{82}S^1 & \cellcolor{lightgray!50}\T^2 & \cellcolor{purple!60}\vee^{53}S^2 & 3\dv 1, 5\dv 36\ (9\dv0) & 3\dv 1\ (9\dv 0) & 7\dv 31, 8\dv 183\ (9\dv0) & (6\dv 0) & \cellcolor{lightgray!50} * & \cellcolor{lightgray!50} * \\
\hline
10 & \vee^{101}S^1 & \cellcolor{lightgray!50}\T^2 & \cellcolor{lightgray!50}\T^2 & 3\dv 21,
4\dv 60 & 3\dv 1, 9\dv 20 & 4\dv 39\ (10\dv0) & (6\dv 0)& (6\dv 0)& \cellcolor{RoyalBlue!55}S^{49} \\
\hline
11 & \vee^{122}S^1 & \cellcolor{lightgray!50}\T^2 & \cellcolor{lightgray!50}\T^2 & \cellcolor{teal!50}\bigvee^{21}S^2\vee\bigvee^{22}S^3 & 3\dv 1, 6\dv 22 & 3\dv 1\ (10\dv 0) & 4\dv1, 5\dv 4 & (5\dv 0) & (5\dv 0) \\
\hline
12 & \vee^{145}S^1 &\cellcolor{lightgray!50}\T^2 & \cellcolor{lightgray!50}\T^2 &\cellcolor{purple!60}\vee^{95}S^2 & 3\dv 1, 4\dv 24, 5\dv 120
& 3\dv 1\ (9\dv0) & 3\dv 1\ (6\dv0) & (4\dv 0) & (4\dv 0) \\
\hline
13 & \vee^{170}S^1 & \cellcolor{lightgray!50}\T^2 & \cellcolor{lightgray!50}\T^2 & \cellcolor{lightgray!50}\T^2 & 3\dv 27, 4\dv 26 & 3\dv 1, 8\dv26 & 3\dv 1\ (6\dv0) & 3\dv 1\ (4\dv0) & (4\dv 0) \\
\hline
14 & \vee^{197}S^1 &\cellcolor{lightgray!50}\T^2 & \cellcolor{lightgray!50}\T^2 & \cellcolor{lightgray!50}\T^2 & \cellcolor{teal!50}\bigvee^{27}S^2\vee\bigvee^{28}S^3& 3\dv 1, 5\dv28, 6\dv84 & 3\dv 1\ (6\dv0)& 3\dv 1\ (6\dv0) & 4\dv 1 \\
\hline
15 & \vee^{226}S^1 & \cellcolor{lightgray!50}\T^2 & \cellcolor{lightgray!50}\T^2 & \cellcolor{lightgray!50}\T^2 & \cellcolor{purple!60}\vee^{149}S^2& 3\dv 1, 4\dv 90& 3\dv 1\ (4\dv 0)& 3\dv 1\ (4\dv0) & 4\dv 89 \\
\hline
16 & \vee^{257}S^1 & \cellcolor{lightgray!50}\T^2 & \cellcolor{lightgray!50}\T^2 & \cellcolor{lightgray!50}\T^2 & \cellcolor{lightgray!50}\T^2 & 3\dv 35, 4\dv34\ (6\dv0)& 3\dv 1, 6\dv64\ (7\dv0)& 3\dv1\ (6\dv0) & 3\dv 1\ (4\dv 0) \\
\hline
17 & \vee^{290}S^1 & \cellcolor{lightgray!50}\T^2 & \cellcolor{lightgray!50}\T^2 & \cellcolor{lightgray!50}\T^2 & \cellcolor{lightgray!50}\T^2 &\cellcolor{teal!50}\bigvee^{33}S^2\vee\bigvee^{34}S^3 &3\dv 1, 4\dv34, 5\dv34 & 3\dv 1\ (6\dv0) & 3\dv 1\ (4\dv 0) \\
\hline
18 & \vee^{325}S^1 & \cellcolor{lightgray!50}\T^2 & \cellcolor{lightgray!50}\T^2 & \cellcolor{lightgray!50}\T^2 & \cellcolor{lightgray!50}\T^2 & \cellcolor{purple!60}\vee^{215}S^2 &3\dv3, 4\dv2, 5\dv216 & 3\dv 1\ (6\dv0)& 3\dv 1\ (4\dv 0) \\
\hline
19 & \vee^{362}S^1 & \cellcolor{lightgray!50}\T^2 & \cellcolor{lightgray!50}\T^2 & \cellcolor{lightgray!50}\T^2 & \cellcolor{lightgray!50}\T^2 & \cellcolor{lightgray!50}\T^2 &3\dv 39, 4\dv38\ (5\dv0) &3\dv 1\ (4\dv 0)& 3\dv 1\ (4\dv 0) \\
\hline
20 & \vee^{401}S^1 & \cellcolor{lightgray!50}\T^2 & \cellcolor{lightgray!50}\T^2 & \cellcolor{lightgray!50}\T^2 & \cellcolor{lightgray!50}\T^2 & \cellcolor{lightgray!50}\T^2 &\cellcolor{teal!50}\bigvee^{39}S^2\vee\bigvee^{40}S^3 & 3\dv 1, 4\dv160\ (5\dv0)& 3\dv 1\ (5\dv 0) \\
\hline
21 & \vee^{442}S^1 & \cellcolor{lightgray!50}\T^2 & \cellcolor{lightgray!50}\T^2 & \cellcolor{lightgray!50}\T^2 & \cellcolor{lightgray!50}\T^2 & \cellcolor{lightgray!50}\T^2 & \cellcolor{purple!60}\vee^{293}S^2 & 3\dv 3, 4\dv2, 5\dv294& 3\dv 1\ (5\dv0) \\
\hline
22 & \vee^{485}S^1 & \cellcolor{lightgray!50}\T^2 & \cellcolor{lightgray!50}\T^2 & \cellcolor{lightgray!50}\T^2 & \cellcolor{lightgray!50}\T^2 & \cellcolor{lightgray!50}\T^2 & \cellcolor{lightgray!50}\T^2 & 3\dv 47, 4\dv46\ (5\dv0)& 3\dv1, 4\dv44, 5\dv44\\
\hline
23 & \vee^{530}S^1 & \cellcolor{lightgray!50}\T^2 & \cellcolor{lightgray!50}\T^2 & \cellcolor{lightgray!50}\T^2 & \cellcolor{lightgray!50}\T^2 & \cellcolor{lightgray!50}\T^2 & \cellcolor{lightgray!50}\T^2 & \cellcolor{teal!50}\bigvee^{45}S^2\vee\bigvee^{46}S^3& 3\dv3, 4\dv2\ (5\dv0) \\
\hline
24 & \vee^{577}S^1 & \cellcolor{lightgray!50}\T^2 & \cellcolor{lightgray!50}\T^2 & \cellcolor{lightgray!50}\T^2 & \cellcolor{lightgray!50}\T^2 & \cellcolor{lightgray!50}\T^2 & \cellcolor{lightgray!50}\T^2 & \cellcolor{purple!60}\vee^{383}S^2& 3\dv 3, 4\dv2, 5\dv384 \\
\hline
25 & \vee^{626}S^1 & \cellcolor{lightgray!50}\T^2 & \cellcolor{lightgray!50}\T^2 & \cellcolor{lightgray!50}\T^2 & \cellcolor{lightgray!50}\T^2 & \cellcolor{lightgray!50}\T^2 & \cellcolor{lightgray!50}\T^2 & \cellcolor{lightgray!50}\T^2 & 3\dv 51, 4\dv50\ (5\dv0) \\
\hline
26 & \vee^{677}S^1 & \cellcolor{lightgray!50}\T^2 & \cellcolor{lightgray!50}\T^2 & \cellcolor{lightgray!50}\T^2 & \cellcolor{lightgray!50}\T^2 & \cellcolor{lightgray!50}\T^2 & \cellcolor{lightgray!50}\T^2 & \cellcolor{lightgray!50}\T^2 & \cellcolor{teal!50}\bigvee^{51}S^2\vee\bigvee^{52}S^3 \\
\hline
27 & \vee^{730}S^1 & \cellcolor{lightgray!50}\T^2 & \cellcolor{lightgray!50}\T^2 & \cellcolor{lightgray!50}\T^2 & \cellcolor{lightgray!50}\T^2 & \cellcolor{lightgray!50}\T^2 & \cellcolor{lightgray!50}\T^2 & \cellcolor{lightgray!50}\T^2 & \cellcolor{purple!60}\vee^{485}S^2 \\
\hline
28 & \vee^{785}S^1 & \cellcolor{lightgray!50}\T^2 & \cellcolor{lightgray!50}\T^2 & \cellcolor{lightgray!50}\T^2 & \cellcolor{lightgray!50}\T^2 & \cellcolor{lightgray!50}\T^2 & \cellcolor{lightgray!50}\T^2 & \cellcolor{lightgray!50}\T^2 & \cellcolor{lightgray!50}\T^2 \\
\Xhline{1.7pt}
\end{tabular}}
\end{adjustwidth}
\end{center}
\captionsetup{width=1\linewidth}
\caption{Homotopy types of clique complexes of graph powers of the torus grid graphs $\vr{T_{n,n}}{k}$.
We write
$i\dv d$ as shorthand for $H_i(\vr{T_{n,n}}{k};\Z/2) \cong (\Z/2)^d$.
Cells that contain an entry of the form $(i\dv0)$ have been computed up to the $i$-th homology group without yielding additional features.
Cells not containing $(i\dv0)$ have had the homology computed up to the greatest $i$ within the cell.
All computations were done using $\Z/2$ coefficients in Ripser.
Each distinct colored diagonal corresponds to a result discussed in the paper: the \colorbox{RoyalBlue!50}{blue}-highlighted diagonal is explained in Section~\ref{sec:intro}, while the \colorbox{purple!60}{red} and \colorbox{teal!50}{teal} diagonals are explained in Section~\ref{sec:wedgeS2S3}.}
\label{table:Torus-Grids}
\end{table}

\begin{table}[h]
\scalebox{0.8}{
\begin{tabular}{? >{$} c <{$} ? >{$} c <{$} | >{$} c <{$} | >{$} c <{$} | >{$} c <{$} ?}
\Xhline{1.7pt}
& k=10 & 11 & 12 & 13 \\
\Xhline{1.7pt}
n=12 & (4\dv 0) & \cellcolor{RoyalBlue!55}S^{71} & \cellcolor{lightgray!50} * & \cellcolor{lightgray!50} *\\
\hline
13 & (4\dv 0)& (4\dv 0) & \cellcolor{lightgray!50} * & \cellcolor{lightgray!50} *\\
\hline
14 & (4\dv 0)& (4\dv 0) & (4\dv 0)& \cellcolor{RoyalBlue!55}S^{97}\\
\hline
15 & (4\dv 0) & (4\dv 0) & (4\dv 0) & (4\dv 0)\\
\hline
16 & 4\dv 3 & (4\dv 0) & (4\dv 0) & (4\dv 0)\\
\hline
17 & 3\dv 1\ (4\dv 0) & 4\dv 1 & (4\dv 0) & (4\dv 0) \\
\hline
18 & 3\dv 1\ (4\dv 0) & 3\dv 1\ (4\dv 0) & (4\dv 0) & (4\dv 0)\\
\hline
19 & 3\dv 1\ (4\dv 0) & 3\dv 1\ (4\dv 0) & 4\dv 1&\\
\hline
20 & 3\dv 1\ (4\dv 0) & 3\dv 1\ (4\dv 0) & 4\dv 159 &\\
\hline
21 & 3\dv 1\ (4\dv 0) & 3\dv 1\ (4\dv 0) & 3\dv 1\ (4\dv 0) &\\
\hline
22 & 3\dv 1\ (4\dv 0) & 3\dv 1\ (4\dv 0) & 3\dv 1\ (4\dv 0) &\\
\hline
23 & 3\dv 1\ (4\dv 0) & 3\dv 1\ (4\dv 0) & 3\dv 1\ (4\dv 0) &\\
\hline
24 & 3\dv 1\ (4\dv 0) & 3\dv 1\ (4\dv 0) &&\\
\hline
25 & 3\dv 1, 4\dv 250 & 3\dv 1\ (4\dv 0) &&\\
\hline
26 & 3\dv 3, 4\dv 2& 3\dv 1\ (4\dv 0) &&\\
\hline
27 & 3\dv 3, 4\dv 2 & 3\dv 1, 4\dv 54 &&\\
\hline
28 & 3\dv 59, 4\dv 58& 3\dv 3, 4\dv 2 &&\\
\Xhline{1.7pt}
\end{tabular}
}
\caption{A continuation of Table~\ref{table:Torus-Grids}.
Additional computational power is likely needed to find interesting topology in the blank entries.
}
\label{table:cont}
\end{table}

Let us describe some initial examples in Table~\ref{table:Torus-Grids}.
At scale $0$, we have that $\vr{T_{n,n}}{0}$ is a collection of $n^2$ vertices, and therefore $\vr{T_{n,n}}{0} \cong \vee^{n^{2}-1}S^{0}$.

By an Euler characteristic argument, any connected graph with $v$ vertices and $e$ edges is homotopy equivalent to $\vee^{e-v+1} S^1$.
Since $\vr{T_{n,n}}{1}$ has $n^2$ vertices and $2n^2$ edges, we get that 
\[
\vr{T_{n,n}}{1}\simeq \vee^{e-v+1} S^1\simeq \vee^{n^{2}+1}S^{1}\quad \text{for} \quad n\geq 4.
\]

For $k\ge 2$ and for $n>3k$, i.e.\ when the scale parameter $k$ is small enough compared to $n$, we prove in Theorem~\ref{thm:torus_homotopy} that $\vr{T_{n,n}}{k}$ is homotopy equivalent to the torus $\T^2$.

We note that $\vr{T_{n,n}}{k}$ is contractible if the scale parameter $k$ is large enough:
\[
\vr{T_{n,n}}{k}\simeq\ast \quad\text{for }k\ge
\begin{cases}
n &\text{if }n\text{ is even} \\
n-1 &\text{if }n\text{ is odd.}
\end{cases}
\]
Indeed, this follows since the diameter $\diam(T_{n,n})$ is equal to $n$ if $n$ is even, and equal to $n-1$ if $n$ is odd.
Once $k$ is large enough so that $k\ge \diam(T_{n,n})$, then any two vertices in $T_{n,n}$ are connected by an edge in the Vietoris--Rips complex, meaning that $\vr{T_{n,n}}{k}$ is a complete simplex and hence contractible.

Interesting topology arises over intermediate scales $\frac{n-1}{3}<k<2\lfloor\frac{n}{2}\rfloor$.
Here $2\lfloor\frac{n}{2}\rfloor$ is simply the function returning $n$ if $n$ is even, and returning $n-1$ if $n$ is odd --- i.e., the function returning the smallest even integer less than or equal to $n$.
Two goals of our paper are (i) to describe the topology of $\vr{T_{n,n}}{k}$ for $\frac{n-1}{3}<k<2\lfloor\frac{n}{2}\rfloor$ as best we can, and (ii) to ask open questions about aspects of the topology in this regime that we have observed (say from homology computations).
We hope this paper inspires future work studying the intricate topology that arises.

For $n$ even we have a homeomorphism $\vr{T_{n,n}}{n-1}\cong S^{\frac{n^2}{2}-1}$; the entries in Table~\ref{table:Torus-Grids} that are examples of this are $\vr{T_{4,4}}{3}\cong S^{7}$, $\vr{T_{6,6}}{5}\cong S^{17}$, $\vr{T_{8,8}}{7}\cong S^{31}$, and $\vr{T_{10,10}}{9}\cong S^{49}$.
These homeomorphisms follow since the complex in question is isomorphic to the boundary of a cross-polytope.
Indeed, if $n$ is even, then $\diam(T_{n,n})=n$, and so at scale $\diam(T_{n,n})-1 = n-1$, each vertex in $\vr{T_{n,n}}{n-1}$ is connected to every other vertex besides its ``antipodal pair''.
By Corollary~\ref{cor:cross-polytopal}, this means that $\vr{T_{n,n}}{k}$ is isomorphic to the boundary of a cross-polytope on $n^2$ vertices, hence homeomorphic to the boundary of a ball of dimension $\frac{n^2}{2}$, and hence homeomorphic to a sphere of dimension $\frac{n^2}{2}-1$.

We prove that $\vr{T_{3n,3n}}{n}\simeq\vee^{6n^2-1}S^2$ for $n\ge 2$.
These homotopy types arise as follows.
The simplicial complex $\vr{T_{3n,3n}}{n}$ is obtained from a subcomplex that is homotopy equivalent to the torus (which we prove using the nerve lemma) by attaching $6n^2$ additional $2$-simplices.
Of these $6n^2$ additional disks, $3n^2$ are attached with boundary circle wrapping once around one circular factor of the torus $S^1\times S^1$, and the remaining $3n^2$ are attached with boundary circle wrapping once around the other circular factor of the torus.
This produces a space that is homotopy equivalent to the $(6n^2-1)$-fold wedge sum of $2$-spheres.

Using a similar proof technique, we also prove that $\vr{T_{3n-1,3n-1}}{n}\simeq \bigvee_{6n-3} S^2\vee \bigvee_{6n-2}S^3$ for $n\geq 3$.

Based on homology computations, we conjecture that for $n\ge 3$ the reduced homology of $\vr{T_{3n-2,3n-2}}{n}$ is nontrivial only in dimensions $3$ and $4$.

In Section~\ref{sec:homotopy-through-homology}, we use homology computations with integer coefficients and the theorems of Hurewicz and Whitehead to prove the homotopy equivalences $\vr{T_{5,5}}{3}\simeq \bigvee_9 S^4$ and $\vr{T_{7,7}}{4}\simeq S^3$.
We conjecture that $\vr{T_{n,n}}{k}$ is homotopy equivalent to a $3$-sphere for a countable family of $(n,k)$ pairs; see Question~\ref{ques:3sphere}.
We conjecture that this 3-sphere is formed from the hollow torus $S^1 \times S^1$ by gluing in two solid tori (the space fills in as the scale increases), in order to obtain the $3$-sphere as the standard genus-1 Heegaard decomposition $S^3=(S^1\times D^2)\cup_{S^1\times S^1}(D^2 \times S^1)=S^1 * S^1$.

Even though Table~\ref{table:Torus-Grids} and~\cite[Table~1]{adamaszek2022vietoris} are quite different, one may recognize a similarity between the homotopy types in one row and one column selected from each table:
\begin{center}
\begin{tabular}{|rl|rl|}
\hline
$4$-th \ row&of \ Table~\ref{table:Torus-Grids} &
$4$-th \ column&of \ \cite[Table~1]{adamaszek2022vietoris} \\
\hline
$\vr{T_{4,4}}{0}$&$\simeq\ \ \vee^{15} S^0$ &
$\vr{Q_4}{0}$&$\simeq\ \ \vee^{15} S^0$ \\
$\vr{T_{4,4}}{1}$&$\simeq\ \ \vee^{17} S^1$ &
$\vr{Q_4}{1}$&$\simeq\ \ \vee^{17} S^1$ \\
$\vr{T_{4,4}}{2}$&$\simeq\ \ \vee^9 S^3$ &
$\vr{Q_4}{2}$&$\simeq\ \ \vee^9 S^3$ \\
$\vr{T_{4,4}}{3}$&$\simeq\ \ S^7$ &
$\vr{Q_4}{3}$&$\simeq\ \ S^7$ \\
$\vr{T_{4,4}}{k}$&$\simeq\ \ *$ for $k\ge 4$ &
$\vr{Q_4}{k}$&$\simeq\ \ *$ for $k\ge 4$ \\
\hline
\end{tabular}
\end{center}
Here $Q_4$ is the vertex set of the $4$-dimensional hypercube with $2^4=16$ vertices, equipped with the Hamming metric, and the homotopy types of $\vr{Q_4}{k}$ are proven for all $k$ in~\cite{adamaszek2022vietoris}.
This similarity is no coincidence.
Indeed, we observe that the $4\times 4$ torus grid graph $T_{4,4}$, which also has 16 vertices, is isomorphic to the $4$-dimensional hypercube graph $Q_4$.
It follows immediately that $\vr{T_{4,4}}{k}\cong \vr{Q_4}{k}$ for all $k$, and in particular, $\vr{T_{4,4}}{2}\simeq\vee^9 S^3$ and $\vr{T_{4,4}}{3}\simeq S^7$.

More generally, let $Q_n$ be the vertex set of the hypercube graph with $2^n$ vertices equipped with the shortest distance path metric, or equivalently, the set of all $2^{n}$ binary strings of length $n$ equipped with the Hamming distance.
The $\underbrace{4\times\ldots\times 4}_{m\text{ times}}$ grid graph on the $m$-dimensional torus $\T^m=(S^1)^m$ is isomorphic to the $2m$-dimensional hypercube graph $Q_{2m}$, and so their Vietoris--Rips complexes coincide as well.
So, determining the homotopy types of Vietoris--Rips complexes of grid graphs on the $m$-dimensional torus can be thought of as a generalization of the problem of determining the homotopy types of Vietoris--Rips complexes of (even-dimensional) hypercube graphs, which is a hard problem~\cite{adamaszek2022vietoris,shukla2022vietoris,adams2024lower,feng2023homotopy,feng2023vietoris,feng2024exploring}.

We remark that results about the equivariant homotopy type of Vietoris--Rips and \v{C}ech complexes of tori may have consequences for the roots of multivariate trigonometric polynomials (see~\cite{cantor2025roots}).
We also refer the reader to~\cite{fendley2004hard,jonsson2006hard,IndCplxSquareGrids} for results on independence complexes of square grid graphs.

We begin in Section~\ref{sec:related} with a description of related work, before continuing with preliminaries and notation in Section~\ref{sec:preliminaries}.
Section~\ref{sec:Z^2} describes the maximal simplices in $\vr{\Z^2}{2}$.
In Section~\ref{sec:wedgeS2S3} we prove the homotopy types of
the torus family $\vr{T_{n, n}}{k}\simeq\T^2$ for $k\ge 2$ and $n>3k$, the family $\vr{T_{3n,3n}}{n}\simeq\vee^{6n^2-1}S^2$ for $n\ge 2$, and the family $\vr{T_{3n-1,3n-1}}{n}\simeq \bigvee_{6n-3} S^2\vee \bigvee_{6n-2}S^3$ for $n\geq 3$.
Section~\ref{sec:cross-polytopes} describes the connection to cross-polytopes, and Section~\ref{sec:homotopy-through-homology} explains how certain homotopy types can be deduced from integral homology computations.
We conclude in Section~\ref{sec:conclusion} with a list of open questions.

\section{Related work}
\label{sec:related}

\subsection*{Hausmann's theorem}

Hausmann's theorem~\cite{Hausmann1995} states that if $M$ is a compact Riemannian manifold, then there is a positive scale $r(M) > 0$ such that if $0 < r < r(M)$, then the Vietoris--Rips complex $\vr{M}{r}$ is homotopy equivalent to $M$.
The value $r(M)$ depends on the geometry of the manifold $M$, such as its curvature.
This is extended by Latschev's theorem~\cite{Latschev2001}, which further implies that if $X$ is sufficiently close to $M$ in the Gromov--Hausdorff distance, then the homotopy equivalence $\vr{X}{r}\simeq M$ is maintained when scale $r$ is sufficiently small compared to the curvature of $M$ and sufficiently large compared to the density of $X$.

\subsection*{Vietoris--Rips complexes of products}
Let $(X_{1},\rho_{1}), (X_{2},\rho_{2}),\ldots,(X_{n},\rho_{n})$ be metric spaces.
For each $1\le p \leq\infty$ we define the $l^p$ metric on the product $X=X_{1}\times X_{2}\times \ldots\times X_{n}$ by
\[d_{l^p}((x_{1},x_{2},\ldots,x_{n}),(y_{1},y_{2},\ldots,y_{n}))=\bigg(\sum_{i=1}^{n}(\rho_{i}(x_{i},y_{i}))^{p}\bigg)^{1/p}.\]
In the case $p=\infty$, we have
\[d_{l^\infty}((x_{1},x_{2},\ldots,x_{n}),(y_{1},y_{2},\ldots,y_{n}))=\max\{\rho_{i}(x_{i},y_{i})~|~1\le i\le n\}.\]
By~\cite[Proposition~10.2]{AA-VRS1}, if we equip the product with the $l^\infty$ metric, then for any $r>0$, we have a homotopy equivalence 
\[\vr{X}{r}\simeq\vr{X_{1}}{r}\times\ldots\times\vr{X_{n}}{r}.\]

Little is known if we instead equip the product with the $l^p$ metric for $p<\infty$.
For example, if $I=\{0,1\}$ is a two-point metric space with $d_{I}(0,1)=1$, then the Vietoris--Rips complexes are easy to understand: 
$\vr{I}{r}$ is two points (the zero-dimensional sphere) for $r<1$, 
and $\vr{I}{r}$ is a (contractible) edge for $r\ge 1$.
So, by~\cite[Proposition~10.2]{AA-VRS1}, $\vr{(I^n,l^\infty)}{r}$ is $2^n$ points for $r<1$, 
and $\vr{(I^n,l^\infty)}{r}$ is contractible for $r\ge 1$.
But, if instead $p=1<\infty$, then the product $(I^n,l^1)$ is isometric to $Q_n$, the vertex set of the hypercube graph equipped with the shortest path metric.
And, the Vietoris--Rips complexes of these metric spaces $\vr{(I^n,l^1)}{r}=\vr{Q_n}{r}$ are quite complicated; see~\cite{adamaszek2022vietoris,shukla2022vietoris,adams2024lower,feng2023homotopy,feng2023vietoris,feng2024exploring}.

There are several papers exploring K\"unneth formulae for persistent homology~\cite{polterovich2017persistence,gakhar2019kunneth,carlsson2020persistent,bubenik2021homological,VargasRosarioThesis}, though from this perspective the Vietoris--Rips complexes of $l^\infty$ products are again significantly simpler than Vietoris--Rips complexes of $l^1$ products.

\subsection*{Vietoris--Rips complexes of platonic solids}

In~\cite{saleh2023vietoris}, the homotopy types of Vietoris--Rips complexes of platonic solids are determined.
In more detail, the vertex set of the platonic solid is equipped with the metric that arises from the shortest path metric in the graph that is the 1-skeleton of that platonic solid.
Many of the higher-dimensional homotopy types that arise are boundaries of cross-polytopes, at a scale parameter one below the diameter of the vertex set.
The most new interesting shape that arises is the Vietoris--Rips complex of the vertex set of the dodecahedron, at scale parameter two below the diameter, when it is homotopy equivalent to a 9-fold wedge of $3$-spheres.

\subsection*{Vietoris-Rips complexes of grid graphs}

Let $\Z^n$ be the integer lattice in $n$-dimensional space, equipped with the $l^1$ metric.
Zaremsky asked the question if for any $n$, there exists a scale parameter $r$ sufficiently large such that the Vietoris--Rips complex $\vr{\Z^n}{r}$ is contractible.
This result was recently proven by Virk in~\cite{virk2024contractibility}; see also~\cite{zaremsky2024contractible}.
The case of $n=2$ or $n=3$ is simpler; see~\cite{373007,wang2024contractibilityvietorisripscomplexesdense}.
In order to understand lattice grids in a torus, we need a different proof in the $n=2$ case that provides a characterization of the maximal simplices of $\vr{\Z^2}{r}$, which we do in Section~\ref{sec:Z^2}.

\section{Preliminaries}
\label{sec:preliminaries}

In this section, we introduce the definitions and notation for metric spaces, simplicial complexes, graph theory, and topology that we will need in the remainder of the paper.

\subsection{Metric spaces}
\label{ssec:metric-spaces}

We denote the \emph{open ball of radius $r\ge 0$ centered at $x\in X$} by $B_X(x,r) = \{ y\in X : d(x,y) < r\}$, and the \emph{closed ball} by $B_X[x,r] = \{ y\in X : d(x,y) \le r\}$.
When the space $X$ is clear, we may write these open and closed balls as $B(x,r)$ and $B[x,r]$.
We define the \emph{diameter} of subset $A\subseteq X$ as $\diam(A)=\sup_{x, y \in A }d(x,y)$.
If $\diam(A)$ is not finite, then we set $\diam(A) = \infty$.

The most frequent metric spaces we will encounter in this paper are the following.
We equip the integer lattice $\Z^2$ with the $l^1$ metric, defined by $d((x,y),(x',y'))=|x-x'|+|y-y'|$ for all $(x,y),(x',y')\in \Z^2$.
This is also referred to as the word metric on $\Z^2$.
We also equip $\R^2$ with the $l^1$ metric, defined in the same way for all $(x,y),(x',y')\in \R^2$.
We let $C_n$ denote the set $\{0,1,\ldots,n-1\}$, equipped with the circular metric $d(i,i')=\min\{|i-i'|,n-|i-i'|\}$.
We let $T_{n,n}$ denote the set $\{0,1,\ldots,n-1\} \times \{0,1,\ldots,n-1\}$, equipped with the $l^1$ product metric, namely $d((i,j),(i',j'))=\min\{|i-i'|,n-|i-i'|\}+\min\{|j-j'|,n-|j-j'|\}$.
The diameter of $C_n$ is $\frac{n}{2}$ if $n$ is even and $\frac{n-1}{2}$ if $n$ is odd.
It follows that the diameter $\diam(T_{n,n})$ is equal to $n$ if $n$ is even, and equal to $n-1$ if $n$ is odd.
Note that the metric on $C_n$ recovers the shortest path metric on the vertex set of a cycle graph, and that the metric on $T_{n,n}$ recovers the shortest path metric on the vertex set of a torus grid graph.
For this reason, by an abuse of notation, we will also use the symbols $C_n$ and $T_{n,n}$ to denote graphs in Section~\ref{ssec:graphs}.

\subsection{Simplicial complexes}
A \emph{$k$-simplex} is a $k$-dimensional polytope that is the convex hull of its $k + 1$ vertices.
For example, a 0-dimensional simplex is a point, a 1-dimensional simplex is a line segment, a 2-dimensional simplex is a triangle, a 3-dimensional simplex is a tetrahedron, etc.

The convex hull of any nonempty subset of the $k + 1$ points that define an $k$-simplex is called a \emph{face} of the simplex.
A \emph{simplicial complex} $K$ is a set of simplices of an underlying vertex set that satisfies the following conditions: 
(1) every face of a simplex from $K$ is also in $K$, and 
(2) the nonempty intersection of any two simplices in $K$ is a face of both of them.
A simplex $\sigma\in K$ is a \emph{maximal simplex} or a \emph{facet} in $K$ if there is no simplex $\tau\in K$ with $\sigma\subsetneq\tau$.
That is, a facet is a simplex that is not the face of any larger simplex.

An abstract simplicial complex $K$ consists of a vertex set $V$ and a collection of simplices, namely finite subsets of $V$, such that if $\sigma\in K$ and $\sigma'\subseteq \sigma$, then $\sigma'\in K$.
We identify abstract simplicial complexes with their geometric realizations, described in the paragraph above.

\subsection{Vietoris--Rips complexes}

For a metric space $(X,d)$ and $r\geq 0$, define the \emph{Vietoris-Rips complex}, denoted as $\vr{X}{r}$, as the simplicial complex satisfying:
\begin{itemize}
\item the underlying vertex set of $\vr{X}{r}$ is $X$, and
\item a finite set $\sigma\subseteq X$ is a simplex in $\vr{X}{r}$ if and only if $\diam(\sigma) \leq r$.
\end{itemize}
This is the closed Vietoris-Rips complex; the open Vietoris-Rips complex instead uses $\diam(\sigma)<r$.

\subsection{Nerve complexes}

We will sometimes understand the homotopy type of a space using nerve complexes and the nerve lemma, which we introduce next.

Given a finite collection of sets $\cU=\{U_{\alpha}\}_{\alpha\in A}$ with each $U_\alpha\subseteq X$,
the \emph{nerve complex} of $\cU$ is the simplicial complex $N(\cU)$ 
whose vertex set is the index set $A$, and where a subset $\{\alpha_{0},\alpha_{1},...,\alpha_{k}\}\subseteq A$ spans a $k$-simplex in $N(\cU)$  if and only if $U_{\alpha_{0} }\cap U_{\alpha_{1} }\cap ...\cap U_{\alpha_{k} }\neq\emptyset$.

\begin{theorem}[Nerve Theorem]
\label{thm:nerve}
Let $\cU$ be a collection of open sets in a metric space $X$ with $\cup \cU = X$.
If every nonempty intersection $\bigcap^{n}_{i=0} U_{\alpha_{i}}$ is contractible, then the nerve simplicial complex $N(\cU)$ is homotopy equivalent to $X$.
\end{theorem}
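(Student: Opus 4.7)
The plan is to prove the Nerve Theorem by induction on the number of sets in $\mathcal{U}$, gluing on one element of the cover at a time and matching the resulting pushout on the $X$-side with the corresponding pushout on the nerve side. When $|\mathcal{U}|=1$, the hypothesis that every nonempty intersection is contractible forces $X=U_1$ to be contractible, while $N(\mathcal{U})$ is a single vertex, so both sides are trivially homotopy equivalent.

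For the inductive step, enumerate $\mathcal{U}=\{U_1,\dots,U_m\}$, set $\mathcal{U}'=\{U_1,\dots,U_{m-1}\}$, and let $X'=\bigcup \mathcal{U}'$. Since $X=X'\cup U_m$ is a union of two open sets, we have a homotopy pushout square
\[
\begin{array}{ccc}
X'\cap U_m & \hookrightarrow & U_m \\
\downarrow & & \downarrow \\
X' & \hookrightarrow & X.
\end{array}
\]
On the nerve side, $N(\mathcal{U})$ is obtained from $N(\mathcal{U}')$ by adjoining the closed star of the new vertex $v_m$ along its link, giving a pushout of CW complexes whose upper-right corner $\mathrm{star}(v_m)$ is a cone and hence contractible.

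The crucial step is to identify the two left-hand columns up to homotopy. The contractibility hypothesis applies to the cover $\{U_i\cap U_m\}_{i<m}$ of $X'\cap U_m$, because nonempty intersections in this cover are of the form $U_{i_1}\cap\cdots\cap U_{i_j}\cap U_m$, which are contractible by assumption. The nerve of this cover is exactly the link of $v_m$ in $N(\mathcal{U})$. Since this cover is strictly smaller than $\mathcal{U}$, the inductive hypothesis yields $\mathrm{link}(v_m)\simeq X'\cap U_m$, and likewise $N(\mathcal{U}')\simeq X'$. Combined with $U_m\simeq *\simeq \mathrm{star}(v_m)$, the gluing lemma for homotopy pushouts delivers $N(\mathcal{U})\simeq X$.

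The main obstacle I expect is bookkeeping: one must verify that the homotopy equivalences produced inductively assemble into a strictly commuting ladder of pushout squares so that the gluing lemma applies. The cleanest way to achieve this is to fix a partition of unity $\{\varphi_\alpha\}$ subordinate to $\mathcal{U}$ and to define all comparison maps uniformly from it, sending $x\in X$ to $\sum_\alpha \varphi_\alpha(x)\,e_\alpha\in |N(\mathcal{U})|$ where $e_\alpha$ is the vertex corresponding to $U_\alpha$; restriction of this partition to subcovers gives compatible maps on every corner of every pushout. A secondary but routine concern is to check that the inclusions $X'\hookrightarrow X$ and the analogous nerve inclusion are cofibrations so that the set-theoretic pushouts compute the homotopy pushouts, which is standard for open covers of a paracompact metric space.
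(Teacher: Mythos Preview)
The paper does not prove this theorem; it simply cites \cite[Corollary~4G.3]{Hatcher}, observing that every metric space is paracompact so Hatcher's general nerve theorem applies directly.

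Your inductive pushout approach is a genuinely different route and a reasonable strategy, but two points keep the sketch from closing. First, the induction treats only finite $\cU$, whereas the statement allows arbitrary covers, and the paper's own application in Theorem~\ref{Theorem:MaximalSimplices} takes the infinite cover $\cU=\{B_{\R^2}(v,\tfrac{k+1}{2})\}_{v\in\Z^2}$; a further limiting argument would be needed. Second, and more substantively, the compatibility step does not work as described. The global partition-of-unity map $\Phi(x)=\sum_\alpha\varphi_\alpha(x)\,e_\alpha$ restricted to $X'=\bigcup_{i<m}U_i$ need not land in $|N(\cU')|$: at any $x\in X'\cap U_m$ with $\varphi_m(x)>0$ the image has a nonzero $e_m$-coordinate. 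Killing that coordinate by renormalising requires $\sum_{i<m}\varphi_i(x)>0$ on all of $X'$, which subordination of $\{\varphi_\alpha\}$ to $\cU$ does not guarantee; choosing instead a \emph{separate} partition of unity for $\cU'$ on $X'$ destroys the commutativity you need against the map on $X'\cap U_m$. You correctly flagged this assembly problem as the crux, but the fix you propose does not resolve it. The inductive scheme can be repaired---for instance by strengthening the inductive hypothesis to ``the canonical map $\Phi$ is a homotopy equivalence'' and constructing local homotopy inverses---but at that point one is essentially carrying out Hatcher's direct argument inside the induction.
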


This version of the nerve theorem follows from~\cite[Corollary~4G.3]{Hatcher} since every metric space is paracompact.
We refer the reader to~\cite{Borsuk1948,Bjorner1995} for other versions of the nerve theorem.

\subsection{Graphs}
\label{ssec:graphs}

A \emph{graph} is a pair $G = (V,E)$ consisting of a set of
vertices, $V$, and a set of edges, $E$, where each edge is a pair of vertices.
In some cases, we will use the notation $G = (V(G), E(G))$ to emphasize that the vertices and edges correspond to the graph $G$.
The \emph{shortest path metric} $d_G$ on $V(G)$ is the metric satisfying that $d_G(u,v)$ is the minimum length of a path of edges connecting $u$ to $v$.
If $u$ and $v$ are not connected by a path of edges, we say $d_G(u,v) = \infty$.
The \emph{diameter} of a finite connected graph $G$ is defined as $\diam(G) = \max \{ d_G(u,v) : u,v \in G\}$.

Given a graph $G$ and a positive integer $k$, the \emph{graph power} $G^k$ is the graph with $V(G^k) = V(G)$, and with $(u,v) \in E(G^k)$ if and only if $d_G(u,v) \leq k$.

For an integer $n\ge 3$, we define the \emph{cycle graph} $C_n = (V(C_n), E(C_n))$, where $V(C_n) = \{ 0, 1, \ldots, n-1 \}$ and $E(C_n) = \{ (i, i+1) : 0 \le i \le n-1 \} \cup \{ (n-1, 0)\}$.

Given graphs $G= (V(G), E(G))$ and $H=(V(H), E(H))$, define the \emph{Cartesian product} (sometimes called \emph{box product}) $G\ \square \ H$ as the graph such that: (i) $V(G \ \square \ H) = V(G) \times V(H)$, and (ii) $((u,u' ),(v,v')) \in E(G\ \square \ H)$ if and only if either $u = v$ and $(u',v') \in E(H)$, or $u' = v'$ and $(u,v)\in E(G)$.
The \emph{torus grid graph} of $n$ by $m$ vertices is defined as $T_{n,m} = C_n \ \square \ C_m$.

The \emph{clique complex} $\cl(G)$ of a graph $G$ is the simplicial complex with
\begin{itemize}
\item vertex set $V(G)$, and
\item a finite $\sigma$ as a simplex if and only if $\sigma$ induces a complete subgraph of $G$.
\end{itemize}

We can relate the clique complexes of graph powers to Vietoris-Rips complexes of graphs equipped with the shortest path metric.
Indeed, for every $k\ge 0$, the clique complex $\cl(G^k)$ is equal to the Vietoris--Rips complex $\vr{(V(G), d_G)}{k}$.
For ease of notation, we write $\vr{(V(G), d_G)}{k}$ as $\vr{G}{k}$.
In particular, $\vr{T_{n,n}}{k}$ denotes the same simplicial complex whether $T_{n,n}$ is interpreted as the metric space defined in Section~\ref{ssec:metric-spaces} or as the vertex set of the torus grid graph equipped with the shortest path metric.




\subsection{Homology}

Given a simplicial complex $K$, we define $\Delta_n(K)$ to be the free abelian group consisting of finite integral combinations of the oriented $n$-simplices in $K$.
The \emph{boundary homomorphism} $\partial_n:\Delta_n(K)\rightarrow\Delta_{n-1}(K)$ is a function that maps each oriented $n$-simplex to a linear combination of its $(n-1)$-dimensional faces, each with an appropriate sign to respect orientation, such that $\partial_{n-1}\circ\partial_{n}=0$.
The collection of all $\Delta_i(K)$ connected by boundary homomorphisms, $\partial_i$, form a \emph{chain complex} 
 \[\cdots\xrightarrow{\partial_{n+2}}\Delta_{n+1}(K)\xrightarrow{\partial_{n+1}}\Delta_n(K)\xrightarrow{\partial_n} \Delta_{n-1}(K)\xrightarrow{\partial_{n-1}}\cdots\xrightarrow{\partial_2}\Delta_1(K)\xrightarrow{\partial_1} \Delta_0(K)\rightarrow 0\]
 where $\partial_i\circ\partial_{i+1}=0$ for each $i$.
 
The \emph{$i$-th homology group} (with $\Z$ coefficients), $H_i(K)$, is defined to be the quotient group $\mathrm{Ker}\partial_i/\mathrm{Im}\partial_{i+1}$.
Moreover, the \emph{$i$-th Betti number} $\beta_i(K)$ is the rank of $H_i(K)$, i.e.\ $\beta_i(K)=\mathrm{rank}(H_i(K))$.

\subsection{Connectivity}

We say that a nonempty topological space $Y$ is \emph{$k$-connected} if the homotopy groups $\pi_i(Y)$ are trivial for all $i\le k$.

\section{The facets of $\vr{\Z^2}{k}$}
\label{sec:Z^2}

Recall that we equip $\Z^2$ and $\R^2$ with the $l^1$ metric, given by
\[d((x,y),(x',y'))=|x-x'|+|y-y'|.\]
In this section, we classify the facets of $\vr{\Z^2}{k}$ for any scale $k$, where $\Z^2$ is equipped with the $l^1$ metric.
This classification will be needed for the proofs of Theorems~\ref{thm:homotopy_3k} and~\ref{thm:homotopy_3k-1} in the following section.
As an easy consequence, we also recover the known result that $\vr{\Z^2}{k}$ is contractible for scales $k$ greater than or equal to $2$.
We begin by introducing the required notation and key lemmas.

Since $\vr{\Z^2}{k}=\vr{\Z^2}{\lfloor k\rfloor}$, it suffices to restrict attention to integer values of $k$.
Given a positive integer $k$, we say that a simplex $\sigma \subseteq \Z^2$ is \emph{$k$-generated} by a set of vertices $\hat \sigma \subseteq \sigma$ if $\sigma = \bigcap_{v\in \hat \sigma} B_{\Z^2}[v,k]$; call the set $\hat \sigma$ a \emph{generating set}.

In the set $\Z^2$, for $n, m \in \N$, let $R_{n,m} = \{ (i,j) \in \Z^2 : 0 \le i\le n-1, 0\le j\le m-1\}$ be the rectangle with $n$ vertices on the base and $m$ vertices on the height.
Note that $R_{1,1}$ is simply a point, $R_{2,1}$ is two vertices with distance $1$, and $R_{2,2}$ is the set of corners of a square.
We define the following collections of simplices with vertex sets in $\Z^2$ up to isometry (translation and $90^o$ rotation).

If $k$ is even, let 
\begin{itemize}
\item
$\Z ( \boldsymbol{\cdot} , k) = \{ \sigma \subseteq \Z^2 : \sigma \text{ is a $\frac{k}{2}$-generated simplex and } \hat \sigma \cong R_{1,1} \}$.
\item
$\Z (\square, k) = \{ \sigma \subseteq \Z^2 : \sigma \text{ is a $\frac{k+2}{2}$-generated simplex and } \hat \sigma \cong R_{2,2} \}$, and
\end{itemize}
If $k$ is odd, let 
\begin{itemize}
\item
$\Z ( \edge, k) = \{ \sigma \subseteq \Z^2 : \sigma \text{ is a $\frac{k+1}{2}$-generated simplex and } \hat \sigma \cong R_{2,1} \}$.
\end{itemize}
By $\hat \sigma \cong R_{n,m}$ we mean that $\hat \sigma$ is isometric to $R_{n,m}$.
Observe that $\Z ( \boldsymbol{\cdot} , k)$ is simply the collection of all closed balls of radius $\frac{k}{2}$ centered at a vertex in $\Z^2$.

Given a non-vertical line $L$ in the plane $\R^2$ of the form $y=mx+b$, define 
\[T_L = \{ (x,y) \in \Z^2 : y> mx+b \} \text{ and } 
B_L = \{ (x,y) \in \Z^2 : y< mx+b \}.\]
That is, $T_L$ and $B_L$ are the top and bottom parts of $\Z^2$ split by the line $L$.
We say that $L$ is \emph{tangent} to a set of vertices $\sigma \subseteq \R^2$ if (1) $L$ contains at least one point of $\sigma$, and (2) either
$(\sigma \subseteq T_L \cup L \text{ and } \sigma \cap B_L =\emptyset) \text{ or } (\sigma \subseteq B_L \cup L \text{ and } \sigma \cap T_L =\emptyset).$

\begin{lemma}
\label{Lemma:EnclosedSigma}
Let $k\ge 0$ be an integer and let $\sigma \in \vr{\Z^2}{k}$ be a maximal simplex.
Then there is a $c \in \R^2$ with each coordinate an integer or a half-integer so that $\sigma = B_{\R^2}[c,\frac{k}{2}] \cap \Z^2$.
\end{lemma}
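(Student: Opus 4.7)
The plan is to exploit the $45^\circ$ change of coordinates that converts the $l^1$ metric on $\R^2$ into the $l^\infty$ metric, where bounding boxes (and their centers) are much easier to describe. Define $T\colon\R^2\to\R^2$ by $T(x,y)=(x+y,\,x-y)=:(u,v)$. Using the elementary identity $|a|+|b|=\max(|a+b|,|a-b|)$, one checks that the $l^1$-distance between $(x,y)$ and $(x',y')$ equals the $l^\infty$-distance between their $T$-images. So the hypothesis $\diam_{l^1}(\sigma)\le k$ translates to $u_{\max}-u_{\min}\le k$ and $v_{\max}-v_{\min}\le k$, where $u_{\min},u_{\max},v_{\min},v_{\max}$ are the extreme values of $u$ and $v$ on $\sigma$.

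Next I would pick the natural center for the bounding box of $\sigma$ in $(u,v)$-coordinates, namely
\[
\tilde c = (u_c,v_c) := \bigl(u_{\min}+\tfrac{k}{2},\ v_{\min}+\tfrac{k}{2}\bigr),
\]
so that $\sigma$ lies inside the closed $l^\infty$-square $[u_c-\tfrac{k}{2},u_c+\tfrac{k}{2}]\times[v_c-\tfrac{k}{2},v_c+\tfrac{k}{2}]$. Setting $c=T^{-1}(\tilde c)$, I compute
\[
x_c=\frac{u_c+v_c}{2}=\frac{u_{\min}+v_{\min}}{2}+\frac{k}{2}, \qquad y_c=\frac{u_c-v_c}{2}=\frac{u_{\min}-v_{\min}}{2}.
\]
Since $u_{\min},v_{\min}\in\Z$ (they are the coordinates of lattice points of $\sigma$) and $\tfrac{k}{2}\in\tfrac{1}{2}\Z$, both $x_c$ and $y_c$ lie in $\tfrac{1}{2}\Z$, which is exactly the statement that each coordinate of $c$ is an integer or a half-integer.

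Finally I would verify both containments. Because $T$ sends the $l^1$-ball $B_{\R^2}[c,\tfrac{k}{2}]$ to the $l^\infty$-square $[u_c-\tfrac{k}{2},u_c+\tfrac{k}{2}]\times[v_c-\tfrac{k}{2},v_c+\tfrac{k}{2}]$, which by construction contains $T(\sigma)$, we get $\sigma\subseteq B_{\R^2}[c,\tfrac{k}{2}]\cap\Z^2$. Conversely, by the triangle inequality any two points of $B_{\R^2}[c,\tfrac{k}{2}]$ are at $l^1$-distance at most $k$, so $B_{\R^2}[c,\tfrac{k}{2}]\cap\Z^2$ is itself a simplex of $\vr{\Z^2}{k}$ containing $\sigma$, and maximality forces equality.

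The only real obstacle is showing that $c$ has each coordinate in $\tfrac{1}{2}\Z$; the rotation trick makes this a one-line parity check, but if one tries to work directly with the diamond-shaped $l^1$-ball and optimize a center without changing coordinates the bookkeeping becomes delicate. It will also be important later to keep track of which parity cases of $k$ force $c$ to lie at a lattice point, at the center of an edge, or at the center of a unit square, since those three cases correspond exactly to the three families $\Z(\boldsymbol{\cdot},k)$, $\Z(\edge,k)$, and $\Z(\square,k)$ defined above.
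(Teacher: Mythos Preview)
Your proof is correct. The underlying geometry is the same as the paper's --- both arguments enclose $\sigma$ in a smallest axis-aligned $l^\infty$ box after a $45^\circ$ rotation --- but the presentations differ. The paper works directly with four tangent lines of slopes $\pm 1$, uses maximality once to argue that the two pairs of parallel tangent lines are at distance exactly $k$ (so the enclosing region is a diamond of side $k$), takes $c$ to be the diamond's center, and then uses maximality a second time to conclude $\sigma = B_{\R^2}[c,\tfrac{k}{2}]\cap\Z^2$. By making the isometry $T$ explicit, you avoid the first use of maximality entirely: anchoring $\tilde c$ at $(u_{\min}+\tfrac{k}{2},\,v_{\min}+\tfrac{k}{2})$ guarantees $\sigma\subseteq B_{\R^2}[c,\tfrac{k}{2}]$ without ever knowing that $u_{\max}-u_{\min}$ equals $k$ rather than something smaller, and a single appeal to maximality at the end finishes. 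Your parity check for $c\in(\tfrac{1}{2}\Z)^2$ is also a shade cleaner than the paper's, which reasons about the intersection point $L_1\cap L_3$.
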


\begin{proof}
Let $\sigma$ be a facet in $\vr{\Z^2}{k}$.
Now, let $L_1, L_2$ be lines in $\R^2$ of the form $y=x+k_1$, $y=x+k_2$ so that $L_1$ and $L_2$ are tangent to $\sigma$ and $\sigma \subseteq (B_{L_1} \cup L_1) \cap (T_{L_2} \cup L_2)$.
Note the lines $L_1$ and $L_2$ are parallel translations of the positive diagonal enclosing $\sigma$ in a sandwich fashion.
Also, let $L_3, L_4$ be lines in $\R^2$ of the form $y=-x+k_3$, $y=-x+k_4$ so that $L_3$ and $L_4$ are tangent to $\sigma$ and $\sigma \subseteq (B_{L_3} \cup L_3) \cap (T_{L_4} \cup L_4)$.
Similarly, the lines $L_3$ and $L_4$ are parallel translations of the negative diagonal enclosing $\sigma$.
Note $k_1,k_2,k_3,k_4\in\Z$.

Let $P$ denote the closed quadrilateral of points in $\R^2$ enclosed by the lines $L_i$ for $1\leq i \leq 4$.
This implies $\sigma \subseteq P$.

We claim that $d(L_1, L_2) = d(L_3, L_4) = k$.
On the one hand, if $d(L_1, L_2)> k$, then $\diam(\sigma) >k$: pick points $v_1 \in L_1 \cap \sigma$ and $v_2 \in L_2 \cap \sigma$, which exist since the lines are tangent to $\sigma$.
Because $L_1$ and $L_2$ are parallel, any pair $u \in L_1$ and $w\in L_2$ have distance $>k$ by our assumption, so $d(v_1, v_2) >k$ which is a contradiction since $v_1, v_2 \in \sigma$.
Similarly for $L_3$ and $L_4$.
So, we can assume that both $d(L_1, L_2), d(L_3, L_4) \leq k$.
On the other hand, if $d(L_1, L_2) < k$, then $\sigma$ is not maximal: consider the line $L' = L_1+1$ and consider the intersecting vertex $u \in L'\cap L_3$.
Note that $d(u,v) \leq k$ for every $v\in \sigma$, contradicting that $\sigma$ is maximal.
We have proven the claim.

From the claim, note that $P$ is a diamond: a quadrilateral with sides of length $k$ and all sides parallel to the diagonal or the negative diagonal.
Let $c$ be the center of mass (intersecting point of the diagonals) of $P$ in $\R^2$.
Though $c$ need not be in $\Z^2$, each coordinate of $c$ is either an integer or a half-integer.
We note $P=B_{\R^2}[c,\frac{k}{2}]$.
It is clear that all pairs in $B_{\R^2}[c,\frac{k}{2}]$ are at distance at most $k$.
Also, since $\sigma \subseteq P = B_{\R^2}[c,\frac{k}{2}]$ and $\sigma$ is maximal, $\sigma$ must coincide with $B_{\R^2}[c,\frac{k}{2}] \cap \Z^2$.
\end{proof}

\begin{proposition}
\label{Proposition:MaximalSimplices}
Let $k\ge 0$ be an integer.
The only facets in $\vr{\Z^2}{k}$ are:
\begin{enumerate}
\item elements from $\Z ( \boldsymbol{\cdot} , k) \cup \Z (\square, k)$, if $k$ is even,
\item elements from $\Z ( \edge, k)$, if $k$ is odd.
\end{enumerate}
That is, the facets of $\vr{\Z^2}{k}$ are of the form $\sigma = B_{\R^2}[c,\frac{k}{2}] \cap \Z^2$ where
\begin{itemize}
\item $c=(i,j)$ or $(i+\frac{1}{2},j+\frac{1}{2})$ for $i,j\in \Z$ if $k$ is even, and
\item $c=(i+\frac{1}{2},j)$ or $(i,j+\frac{1}{2})$ for $i,j\in \Z$ if $k$ is odd.
\end{itemize}
\end{proposition}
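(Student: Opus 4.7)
The plan is to refine the conclusion of Lemma~\ref{Lemma:EnclosedSigma} by a parity analysis identifying which parities of the coordinates of $c$ are possible, and then to verify in each case that $\sigma$ has a generating set of the claimed shape.

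First, recall from Lemma~\ref{Lemma:EnclosedSigma} that any facet satisfies $\sigma = B_{\R^2}[c,k/2]\cap \Z^2$, where each coordinate of $c$ lies in $\tfrac{1}{2}\Z$. By itself this permits four parity patterns for $c$, and I need to eliminate the two that do not appear in the proposition. To do this, I revisit the proof of Lemma~\ref{Lemma:EnclosedSigma} and compute $c$ explicitly from the four intercepts $k_1,k_2,k_3,k_4\in\Z$ of the tangent lines, which satisfy $k_1-k_2 = k_3-k_4 = k$. Parametrizing the enclosing diamond by $u=y-x$, $v=y+x$ and intersecting its diagonals yields
\[ c_x = \frac{k_4-k_2}{2}, \qquad c_y = \frac{k_2+k_4}{2} + \frac{k}{2}. \]
Since $k_4-k_2$ and $k_4+k_2$ always share the same parity, $c_x$ is an integer iff $(k_2+k_4)/2$ is an integer. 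The shift by $k/2$ then preserves or flips parity according to the parity of $k$: for $k$ even the two coordinates of $c$ are either both integers or both half-integers, while for $k$ odd exactly one is an integer and the other is a half-integer. This is exactly the restriction claimed in the proposition.

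Next, for each allowed form of $c$ I exhibit a generating set of the required shape. The case $c=(i,j)\in\Z^2$ with $k$ even is immediate: $\sigma = B_{\Z^2}[(i,j),k/2]$ is $\tfrac{k}{2}$-generated by the single vertex $(i,j)\cong R_{1,1}$, placing $\sigma\in \Z(\boldsymbol{\cdot},k)$. For $c=(i+\tfrac{1}{2},j+\tfrac{1}{2})$ with $k$ even, I use the elementary identity
\[ \max_{v_1\in\{i,i+1\}}|x-v_1| \;=\; \bigl|x-(i+\tfrac{1}{2})\bigr| + \tfrac{1}{2}, \]
together with its $y$-analogue, to deduce
\[ \max_{v\in R_{2,2}}\bigl(|x-v_1|+|y-v_2|\bigr) \;=\; |x-c_x| + |y-c_y| + 1. \]
Hence $\bigcap_{v\in R_{2,2}} B_{\Z^2}[v,(k+2)/2]$ coincides with $B_{\R^2}[c,k/2]\cap\Z^2=\sigma$, showing $\sigma\in\Z(\square,k)$. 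The odd case is handled analogously with generating set $R_{2,1}$ at radius $(k+1)/2$, using the identity above in only the $x$-coordinate; the two subcases $c=(i+\tfrac{1}{2},j)$ and $c=(i,j+\tfrac{1}{2})$ are interchanged by a $90^\circ$ rotation, which preserves $\vr{\Z^2}{k}$ as an isometry of the ambient metric.

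The main obstacle I anticipate is the bookkeeping in the parity step, namely correctly fixing the orientation conventions of $L_1,L_2,L_3,L_4$ from the proof of Lemma~\ref{Lemma:EnclosedSigma} and reading off $c$ cleanly in terms of the $k_i$. A minor supplementary check, which I would record separately, is that the proposed generating vertices actually belong to $\sigma$; this is automatic for $\Z(\boldsymbol{\cdot},k)$, and amounts to the mild bounds $k\ge 2$ for $\Z(\square,k)$ and $k\ge 1$ for $\Z(\edge,k)$.
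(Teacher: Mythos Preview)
Your proposal is correct and follows essentially the same route as the paper's proof: both arguments refine Lemma~\ref{Lemma:EnclosedSigma} by reading off the center $c$ from the integer intercepts $k_i$ of the tangent lines and then running a parity check. The paper does this slightly more tersely, observing that the top corner $L_1\cap L_3$ has coordinates summing to an integer and that $c$ sits $k/2$ below it, whereas you compute $c_x=(k_4-k_2)/2$ and $c_y=(k_2+k_4)/2+k/2$ directly and use that $k_4\pm k_2$ share parity; these are the same calculation. Your additional step verifying the generating-set descriptions $R_{1,1},R_{2,2},R_{2,1}$ is a welcome bit of completeness that the paper leaves implicit in the definitions.
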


\begin{proof}
Let $\sigma$ be a facet in $\vr{\Z^2}{k}$.
By Lemma~\ref{Lemma:EnclosedSigma}, we can write $\sigma = B_{\R^2}[c,\frac{k}{2}] \cap \Z^2$ with each coordinate of $c$ an integer of a half-integer.
Let $L_i$, for $1\leq i \leq 4$, denote the tangent lines that enclose $\sigma$ as in the proof of Lemma~\ref{Lemma:EnclosedSigma}.
That is, $L_1, L_2, L_3, L_4$ are of the form, $y=x+k_1$, $y=x+k_2$, $y=-x+k_3$, $y=-x+k_4$, respectively, so that they form a diamond with sides of length $k = d(L_1, L_2) = d(L_3, L_4)$.



Since the top lines $L_1$ and $L_3$ are lattice diagonals (lines of slopes $\pm 1$ with integer $y$-intercepts), their intersection point $L_1\cap L_3$ has coordinates $(x,y)$ that are either both integers or both half-integers.
Thus $x+y$ is an integer.
The center $c$ of the ball with $\sigma=B_{\R^2}[c,\frac{k}{2}]\cap\Z^n$ is $\frac{k}{2}$ units below the point $L_1\cap L_3$, namely $c=(x,y-\frac{k}{2})$.
Therefore $c$ has integer or half-integer coordinates that add up to an integer if $k$ is even, and that add up to a half-integer if $k$ is odd.
\end{proof}

\begin{corollary}
\label{Corollary:NerveZ2}
Let $k\ge 0$ be an integer.
Then $\sigma\subseteq \Z^2$ is a simplex in $\vr{\Z^2}{k}$ if and only if $\cap_{v\in\sigma}B_{\R^2}(v,\frac{k+1}{2}) \neq \emptyset$.
\end{corollary}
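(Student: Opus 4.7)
The plan is to split the argument into the two implications and observe that the heavy lifting has already been done in Proposition~\ref{Proposition:MaximalSimplices}.

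For the ``if'' direction, I would assume a point $c$ lies in $\bigcap_{v\in\sigma}B_{\R^2}(v,\frac{k+1}{2})$ and show directly that $\diam(\sigma)\le k$. For any two vertices $u,v\in\sigma$, the triangle inequality for the $l^1$ metric on $\R^2$ gives
\[d(u,v)\ \le\ d(u,c)+d(c,v)\ <\ \frac{k+1}{2}+\frac{k+1}{2}\ =\ k+1.\]
Since $u,v\in\Z^2$, the $l^1$ distance $d(u,v)$ is a nonnegative integer, so the strict inequality $d(u,v)<k+1$ actually forces $d(u,v)\le k$. Thus $\diam(\sigma)\le k$ and $\sigma$ is a simplex in $\vr{\Z^2}{k}$.

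For the ``only if'' direction, I would first observe that every simplex $\sigma\in\vr{\Z^2}{k}$ is contained in some facet. Indeed, any simplex $\tau'\supseteq\sigma$ has vertex set inside the finite set $\bigcap_{v\in\sigma}B_{\Z^2}[v,k]$, so the poset of simplices containing $\sigma$ is finite and therefore has a maximal element $\tau$. Proposition~\ref{Proposition:MaximalSimplices} then tells us that this facet has the form $\tau=B_{\R^2}[c,\tfrac{k}{2}]\cap\Z^2$ for some explicit center $c\in\R^2$. Because $\sigma\subseteq\tau$, every vertex $v\in\sigma$ satisfies $d(v,c)\le\tfrac{k}{2}<\tfrac{k+1}{2}$, so $c\in B_{\R^2}(v,\tfrac{k+1}{2})$ for every $v\in\sigma$, proving the intersection is nonempty.

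There is no real obstacle once Proposition~\ref{Proposition:MaximalSimplices} is available; the only detail meriting a moment's care is the interplay between the closed balls of radius $\tfrac{k}{2}$ that appear in the facet classification and the open balls of radius $\tfrac{k+1}{2}$ appearing in this corollary. The ``slack'' of $\tfrac{1}{2}$ is exactly what the integrality of $l^1$ distances on $\Z^2$ absorbs in the ``if'' direction, while the extra room in the open balls trivially accommodates the closed balls in the ``only if'' direction.
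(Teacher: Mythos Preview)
Your proof is correct. The ``only if'' direction is essentially the paper's argument, though you are in fact more careful than the paper in explicitly passing from an arbitrary simplex to a facet containing it before invoking Proposition~\ref{Proposition:MaximalSimplices}.

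Your ``if'' direction, however, is genuinely simpler than the paper's. The paper argues that whenever $\bigcap_{v\in\sigma}B_{\R^2}(v,\tfrac{k+1}{2})$ is nonempty it must contain a point $c$ whose coordinates are integers or half-integers, by analyzing the shape of an intersection of open $l^1$ balls; it then uses this special $c$ to deduce $\sigma\subseteq B_{\R^2}[c,\tfrac{k}{2}]$. You bypass this entirely: taking \emph{any} point $c$ in the intersection, the triangle inequality gives $d(u,v)<k+1$ for all $u,v\in\sigma$, and integrality of $l^1$ distances on $\Z^2$ immediately yields $d(u,v)\le k$. This is cleaner and requires no structural analysis of the intersection region. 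The paper's more explicit argument does have a side benefit---it effectively identifies $\vr{\Z^2}{k}$ with the nerve of the open cover $\{B_{\R^2}(v,\tfrac{k+1}{2})\}_{v\in\Z^2}$ in a way that makes the subsequent contractibility proof (Theorem~\ref{Theorem:MaximalSimplices}) transparent---but for the corollary as stated, your route is the more economical one.
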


\begin{proof}
First we claim that for $c\in \R^2$ with each coordinate an integer or a half-integer, we have $B_{\R^2}[c,\frac{k}{2}] \cap \Z^2 = B_{\R^2}(c,\frac{k+1}{2}) \cap \Z^2$.
The $\subseteq$ direction is clear.
For the reverse direction, note if $z\in B_{\R^2}(c,\frac{k+1}{2}) \cap \Z^2$, then $d(z,c)<\frac{k+1}{2}$.
Since $z$ and $c$ have integer or half-integer coordinates, and since $k$ is an integer, this implies $d(z,c)\le\frac{k}{2}$, and hence $z\in B_{\R^2}[c,\frac{k}{2}]$.

Hence if $\sigma$ is a simplex in $\vr{\Z^2}{k}$, then by Proposition~\ref{Proposition:MaximalSimplices} and the claim above we have $\sigma = B_{\R^2}[c,\frac{k}{2}] \cap \Z^2 = B_{\R^2}(c,\frac{k+1}{2}) \cap \Z^2$, where $c$ has each coordinate an integer or a half-integer.
Hence $c\in \cap_{v\in\sigma}B_{\R^2}(v,\frac{k+1}{2})$, so $\cap_{v\in\sigma}B_{\R^2}(v,\frac{k+1}{2}) \neq \emptyset$.

Next we claim that if the intersection $\cap_{v\in\sigma}B_{\R^2}(v,\frac{k+1}{2})$ is nonempty for $\sigma\subseteq \Z^2$, then that intersection contains some point $c$ with each coordinate an integer or a half-integer.
The intersection of any open $l^1$ balls in $\R^2$ yields an intersection set equal to $\{(x,y)\in\R^2 : k_2 < y-x < k_1 ,\ k_4 < y+x < k_3\}$.
Write
\[ \cap_{v\in\sigma}B_{\R^2}(v,\tfrac{k+1}{2}) = \{(x,y)\in\R^2 : k_2 < y-x < k_1 ,\ k_4 < y+x < k_3\}.\]
Since the ball centers are in $\Z^2$, it follows that each of $k_1,k_2,k_3,k_4$ is an integer (if $k$ is odd) or an integer plus $\frac{1}{2}$ (if $k$ is even).
Since this intersection set is nonempty, we have $k_1-k_2\ge 1$ and $k_3-k_4\ge 1$.
It follows that the point $c=(x,y)\in\R^2$ with $y-x=k_1-\frac{1}{2}$ and $y+x=k_3-\frac{1}{2}$, namely the point $c=(\frac{k_3-k_1}{2},\frac{k_1+k_3-1}{2})$, is a point in $\cap_{v\in\sigma}B_{\R^2}(v,\frac{k+1}{2})$ with each coordinate an integer or a half-integer.

Hence if $\cap_{v\in\sigma}B_{\R^2}(v,\frac{k+1}{2}) \neq \emptyset$, then there is some $c$ in this intersection with each coordinate an integer or a half-integer.
Hence $\sigma \subseteq B_{\R^2}(c,\frac{k+1}{2})$.
Since $k$ is an integer and all coordinates of $c$ are integers or half-integers, this implies $\sigma \subseteq B_{\R^2}[c,\frac{k}{2}]$.
So $\diam(\sigma)\le k$ and $\sigma\in \vr{\Z^2}{k}$.
\end{proof}

This concludes the classification of the facets of $\vr{\Z^2}{k}$, which is the main point of this section.
As a consequence, we can deduce the known result that $\vr{\Z^2}{k}$ is contractible for all $k\ge 2$~\cite{zaremsky2024contractible,373007,wang2024contractibilityvietorisripscomplexesdense,virk2024contractibility}.

\begin{theorem}
\label{Theorem:MaximalSimplices}
$\vr{\Z^2}{k}$ is contractible for every $k\geq 2$, where $\Z^2$ is equipped with the $l^1$ metric.
\end{theorem}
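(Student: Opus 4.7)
The plan is to recognize $\vr{\Z^2}{k}$ as a nerve complex and apply the Nerve Theorem. By Corollary~\ref{Corollary:NerveZ2}, for any integer $k\ge 0$ the complex $\vr{\Z^2}{k}$ coincides with the nerve $N(\cU)$ of the collection
\[
\cU = \bigl\{B_{\R^2}\bigl(v,\tfrac{k+1}{2}\bigr)\bigr\}_{v\in \Z^2}
\]
of open $l^1$-balls of radius $\tfrac{k+1}{2}$ centered at the lattice points. So it suffices to check that $\cU$ satisfies the hypotheses of the Nerve Theorem and that the space it covers is contractible.

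First I would verify that $\cU$ is an open cover of $\R^2$ when $k\ge 2$. Every point of $\R^2$ lies within $l^1$ distance at most $1$ of some lattice point (the worst case is a unit-square center at distance exactly $1$), and for $k\ge 2$ we have $\tfrac{k+1}{2}\ge \tfrac{3}{2}>1$, so $\bigcup \cU = \R^2$. Next I would observe that each open $l^1$-ball in $\R^2$ is convex, that finite intersections of convex sets are convex, and that nonempty convex subsets of $\R^2$ are contractible; hence every nonempty intersection of finitely many members of $\cU$ is contractible. Applying the Nerve Theorem then gives
\[
\vr{\Z^2}{k} \;=\; N(\cU) \;\simeq\; \R^2 \;\simeq\; \ast.
\]

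The only technical point is that Theorem~\ref{thm:nerve} is phrased for finite covers while $\cU$ is countably infinite. The cited form from Hatcher's Corollary~4G.3 does not require finiteness, and our cover is moreover locally finite in the paracompact space $\R^2$, so the theorem applies without modification. As a cleaner alternative, one can exhaust via the finite subcomplexes $\vr{[-N,N]^2 \cap \Z^2}{k}$: each is the nerve of a finite good cover of an open neighborhood of $[-N,N]^2$ that deformation retracts to that square, hence contractible by the finite Nerve Theorem, and a compactness-of-maps-from-spheres argument shows the colimit CW complex $\vr{\Z^2}{k}$ is weakly contractible and therefore contractible. The main obstacle is handling this finite-versus-infinite nerve subtlety cleanly; once it is addressed, no further difficulty arises.
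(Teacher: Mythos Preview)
Your proposal is correct and follows essentially the same route as the paper: identify $\vr{\Z^2}{k}$ with the nerve of the open $l^1$-balls $\{B_{\R^2}(v,\tfrac{k+1}{2})\}_{v\in\Z^2}$ via Corollary~\ref{Corollary:NerveZ2}, observe that these convex balls cover $\R^2$ for $k\ge 2$ and have contractible nonempty intersections, and invoke the Nerve Theorem. You give a bit more detail on why the cover is genuine and on the infinite-cover issue, but the argument is the same.
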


\begin{proof}
By Corollary~\ref{Corollary:NerveZ2}, the $\vr{\Z^2}{k}$ is isomorphic to the nerve complex $N(\cU)$, where $\cU$ is the collection of open balls $\cU=\{B_{\R^2}(v,\frac{k+1}{2})\}_{v\in \Z^2}$.
Since $k\ge 2$, we have that $\cU$ is a cover of $\R^2$.
Since $l^1$ balls in $\R^2$ are convex, a finite intersection of such balls is convex, and hence either empty or contractible.
Therefore, the nerve theorem (Theorem~\ref{thm:nerve}) gives a homotopy equivalence $N(\cU) \simeq \cup\cU = \R^2$.
So $\vr{\Z^2}{k}\cong N(\cU) \simeq \R^2$ is contractible.
\end{proof}

\section{
Homotopy types of $\vr{T_{n,n}{k}}$.
}
\label{sec:wedgeS2S3}

In this section, we discuss the homotopy types of some of the complexes $\vr{T_{n,n}}{k}$.
We prove that $\vr{T_{n,n}}{k}$ is homotopy equivalent to the torus $\T^2$ for $k\ge 2$ and $n>3k$.
We show that for $k\geq 2$, $\vr{T_{3k,3k}}{k}$ is homotopy equivalent to a $(6k^2-1)$-fold wedge sum of $2$-spheres.
Finally, for $k\geq 3$, we prove $\vr{T_{3k-1,3k-1}}{k}$ is homotopy equivalent to a wedge sum of $6k-3$ many $S^2$'s and $6k-2$ many $S^3$'s.
In Section~\ref{ssec:maximal} we classify the facets in these simplicial complexes, before determining the homotopy types of the complexes in Section~\ref{ssec:homotopy}.

Though we expect some of the results in this section to have analogs for grid graphs in higher-dimensional tori, we do not pursue that direction here.

\subsection{Maximal simplices}
\label{ssec:maximal}

We denote $G_n=n\Z\times n\Z$ and its group action on the grid $\Z^2$ by $g\cdot (a, b)= (a+i, b+j)$ for $g=(i, j)\in G$.
Recall that we equip $\Z^2$ with the $l^1$ metric.
It is straightforward to verify that $T_{n, n}$ is isometric to $\Z^2/G_n$, equipped with the quotient metric, by the natural correspondence.
For convenience, we use $d$ to represent the metrics on $\Z^2$ and $\Z^2/G_n$.
Let $\pi_n$ be the quotient map from $\Z^2$ to $\Z^2/G_n$.
For each $x\in \Z^2$, let $[x]=\{x'\in \Z^2~|~\pi_n(x')=\pi_n(x)\}$ be the corresponding element of $\Z^2/G_n = \pi_n(\Z^2)$.
If $x=(a,b)\in\Z^2$, then we may also write $[x]=([a],[b])$, where $[a]=\{a'\in \Z~|~a'=a\pmod n\}$.
If any distinct pair $x' \neq x''\in \Z^2$ satisfies $x', x'' \in [x]$, then $d(x', x'')\geq n$ since either the horizontal or the vertical coordinates differ by at least $n$.
Notice that $d([x], [y]) =\min \{d(x',y')~|~ x' \in [x]\text{ and }y'\in [y] \}$.
Using these facts, the following result is straightforward to verify.

\begin{lemma}
\label{lemma:unique_pre}
Fix $n>2k$ and $[x],[y]\in \pi_n(\Z^2)$.
Suppose that $d([x],[y])\leq k$.
Then for any $x'\in [x]$, there exists a unique $y'\in [y]$ such that
\[d(x',y')=d([x],[y])\leq k.\]
\end{lemma}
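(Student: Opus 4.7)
The plan is to exploit two features of the covering $\pi_n\dv \Z^2\to\Z^2/G_n$: the metric is $G_n$-invariant, and distinct points in a single fiber are at least $n$ apart. Since $n>2k$, any $k$-ball in $\Z^2$ can meet a fiber in at most one point, which will yield uniqueness; existence comes from moving a minimizing pair by the group action.

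First I would establish existence. By definition of the quotient metric,
\[
d([x],[y])=\min\{\,d(x'',y'')\dv x''\in[x],\,y''\in[y]\,\},
\]
and this minimum is attained (over some bounded search region) since $\Z^2$ is discrete. Let $(x_0,y_0)$ be a minimizing pair, so $d(x_0,y_0)=d([x],[y])\le k$. Given the arbitrary $x'\in[x]$, pick $g\in G_n$ with $g\cdot x_0=x'$, and set $y'=g\cdot y_0\in[y]$. Since the $l^1$ metric on $\Z^2$ is $G_n$-invariant, $d(x',y')=d(x_0,y_0)=d([x],[y])\le k$.

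Next I would prove uniqueness. Suppose $y_1,y_2\in[y]$ both satisfy $d(x',y_i)\le k$. By the triangle inequality,
\[
d(y_1,y_2)\le d(y_1,x')+d(x',y_2)\le 2k<n.
\]
On the other hand, as noted in the paragraph preceding the lemma, any two distinct points in the same fiber $[y]$ differ by some nonzero element of $n\Z\times n\Z$, so either their horizontal or vertical coordinates differ by at least $n$, forcing $d(y_1,y_2)\ge n$. This contradicts $d(y_1,y_2)<n$ unless $y_1=y_2$, finishing the proof.

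This argument is essentially a bookkeeping exercise; there is no real obstacle. The only place one has to be a little careful is in invoking that a minimizer for $d([x],[y])$ exists and that the $G_n$-action is transitive on fibers, both of which are immediate from the definitions, and in the observation that $2k<n$ is precisely what separates any two $k$-close candidates in a fiber.
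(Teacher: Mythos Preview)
Your proof is correct and follows exactly the approach the paper intends: the paper does not actually write out a proof of this lemma, remarking only that it is ``straightforward to verify'' from the two facts stated just before it (that distinct points in a fiber are at distance $\ge n$, and that $d([x],[y])$ is realized as a minimum over representatives). Your argument simply unpacks those two observations---translating a minimizing pair by the $G_n$-action for existence, and using $2k<n$ together with the fiber-separation bound for uniqueness---so there is nothing to compare.
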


Next we discuss the relations between the facets in $\vr{\Z^2}{k}$ and $\vr{\pi_n(\Z^2)}{k}$.

\begin{lemma}
\label{lemma:imageoffacets}
Let $\sigma$ be a facet in the complex $\vr{\Z^2}{k}$.
For $n>2k+1$, $\pi_n(\sigma)$ is a facet in $\vr{\pi_n(\Z^2)}{k}$.
\end{lemma}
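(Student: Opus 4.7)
The plan is to establish both that $\pi_n(\sigma) \in \vr{\pi_n(\Z^2)}{k}$ and that this simplex is maximal, by lifting any hypothetical extra vertex back to $\Z^2$ and contradicting maximality of $\sigma$ there.

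For membership, the quotient map $\pi_n$ is $1$-Lipschitz from the definition of the quotient metric, so $\diam(\pi_n(\sigma)) \le \diam(\sigma) \le k$, giving $\pi_n(\sigma) \in \vr{\pi_n(\Z^2)}{k}$. For maximality, I would suppose toward a contradiction that there is $[z] \in \pi_n(\Z^2) \setminus \pi_n(\sigma)$ with $\pi_n(\sigma) \cup \{[z]\}$ a simplex, so that $d([x],[z]) \le k$ for every $x \in \sigma$. Since $n > 2k+1 > 2k$, Lemma~\ref{lemma:unique_pre} assigns to each $x \in \sigma$ a unique $z_x \in [z]$ realizing $d(x, z_x) = d([x],[z]) \le k$. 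Define $\Phi \colon \sigma \to [z]$ by $\Phi(x) = z_x$; the whole argument reduces to showing $\Phi$ is constant.

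The key step, and the one that uses the hypothesis $n > 2k+1$, is the claim $\Phi(x) = \Phi(x')$ whenever $x, x' \in \sigma$ are grid-adjacent (i.e.\ $d(x,x')=1$). For such a pair,
\[
d(z_x, z_{x'}) \le d(z_x, x) + d(x,x') + d(x', z_{x'}) \le k + 1 + k = 2k+1 < n,
\]
and since $z_x, z_{x'}$ lie in the common orbit $[z]$ whose distinct elements are at distance at least $n$, we conclude $z_x = z_{x'}$. To propagate this equality across all of $\sigma$, I would use the grid-connectivity of $\sigma$: by Proposition~\ref{Proposition:MaximalSimplices}, $\sigma = B_{\R^2}[c,k/2] \cap \Z^2$ for some $c$ with integer or half-integer coordinates, and because the $l^1$ ball is convex, from any lattice point of $\sigma$ one can reach a lattice point nearest $c$ by changing one coordinate at a time by $\pm 1$ toward $c$; each such step is a grid edge whose endpoints both lie in $B_{\R^2}[c,k/2] \cap \Z^2 = \sigma$ (since the $l^1$ distance to $c$ does not increase).

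Combining these, $\Phi$ is constant with some common value $z_0 \in [z]$, so $d(x, z_0) \le k$ for every $x \in \sigma$. Therefore $\sigma \cup \{z_0\}$ is a simplex in $\vr{\Z^2}{k}$, and the maximality of $\sigma$ forces $z_0 \in \sigma$, giving $[z] = [z_0] \in \pi_n(\sigma)$, a contradiction. The main technical obstacle is the constancy of $\Phi$: it is precisely the interplay between the uniqueness from Lemma~\ref{lemma:unique_pre}, the sharp triangle inequality along a single grid edge, and the grid-connectivity of the lattice points of a convex $l^1$ ball that makes $n > 2k+1$ the right hypothesis.
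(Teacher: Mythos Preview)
Your proof is correct and follows essentially the same approach as the paper: both reduce maximality of $\pi_n(\sigma)$ to showing the unique lifts $z_x\in[z]$ all coincide, using the triangle inequality across a single grid edge together with $n>2k+1$ and the grid-connectivity of $\sigma$ coming from Proposition~\ref{Proposition:MaximalSimplices}. Your version is a bit more explicit in justifying the grid-connectivity of $\sigma=B_{\R^2}[c,k/2]\cap\Z^2$ via monotone coordinate moves toward $c$, whereas the paper simply invokes the shape of $\sigma$ to assert that if the lifts are not all equal then some adjacent pair witnesses this.
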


\begin{proof}
Clearly $\pi_n(\sigma)$ is a simplex in $\vr{\pi_n(\Z^2)}{k}$.
Suppose that $\pi_n(\sigma)$ is not maximal.
Then pick $[y]\notin \pi_n(\sigma)$ such that $d([y], [x])\leq k$ for all $x\in \sigma$.
By Lemma~\ref{lemma:unique_pre}, for each $x\in \sigma$, there is a unique $y_x\in [y]$ such that $d(x, y_x)
\leq k$.
If all of the $y_x$ points are the same, then $\sigma\cup \{y_x\}$ is a simplex in $\vr{\Z^2}{k}$, which contradicts with the maximality of $\sigma$.
Otherwise by the shape of $\sigma$ as in Proposition~\ref{Proposition:MaximalSimplices}, there are $x$ and $x'$ in $\sigma$ such that $d(x, x')=1$ and $y_x\neq y_{x'}$.
Then $d(y_x, y_{x'})\leq d(y_x, x)+d(x, x') + d(x', y_{x'}) \leq 2k+1$, which contradicts the fact that $d(y_x, y_{x'})\geq n>2k+1$.
\end{proof}

\begin{lemma}
\label{lemma:prefacets}
Suppose that $n\geq 3k-1$ and $k\geq 1$.
Let $x,y\in\Z^2$ with $x\neq y$, with $3k-n< d(x,y)\leq k$, and with $\pi_n(B_{\Z^2}[x, k]\cap B_{\Z^2}[y, k]) =\pi_n(B_{\Z^2}[x, k])\cap \pi_n(B_{\Z^2}[y, k])$.
Then any facet $\tau$ in $\vr{\pi_n(\Z^2)}{k}$ containing $[x]$ and $[y]$ is of the form $\pi_n(\sigma)$ for some facet $\sigma\in \vr{\Z^2}{k}$.
\end{lemma}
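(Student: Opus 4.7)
The plan is to lift the facet $\tau$ element-wise to $\Z^2$ using the intersection hypothesis, and then to check that the lift is a facet of $\vr{\Z^2}{k}$. Note first that the range $3k-n<d(x,y)\le k$ is non-vacuous only if $n>2k$, so Lemma~\ref{lemma:unique_pre} applies throughout. For each $[z]\in\tau$, both $d([z],[x])\le k$ and $d([z],[y])\le k$, so $[z]\in\pi_n(B_{\Z^2}[x,k])\cap\pi_n(B_{\Z^2}[y,k])$; the intersection hypothesis furnishes a lift $z^*\in[z]$ inside $B_{\Z^2}[x,k]\cap B_{\Z^2}[y,k]$, and Lemma~\ref{lemma:unique_pre} makes $z^*$ the unique such lift. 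Setting $\sigma=\{z^*:[z]\in\tau\}$ automatically gives $\pi_n(\sigma)=\tau$ and $x,y\in\sigma$; it remains to show that $\sigma$ is a facet.

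The hard step will be to prove $\diam(\sigma)\le k$. Fix $z_1^*,z_2^*\in\sigma$; since $d([z_1],[z_2])\le k$, Lemma~\ref{lemma:unique_pre} provides a unique $z_2'\in[z_2]$ with $d(z_1^*,z_2')\le k$, so the problem reduces to proving $z_2'=z_2^*$. If not, $z_2'-z_2^*=n(c,d)$ for some nonzero $(c,d)\in\Z^2$; combining $d(z_2',z_2^*)\ge n$ with the triangle bound $d(z_2',z_2^*)\le d(z_2',z_1^*)+d(z_1^*,x)+d(x,z_2^*)\le 3k$ and the hypothesis $n\ge 3k-1$ forces $|c|+|d|=1$ and $n\in\{3k-1,3k\}$. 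Placing $z_1^*$ at the origin and inspecting each of the four axial possibilities $z_2'-z_2^*\in\{\pm(n,0),\pm(0,n)\}$, the requirement $d(z_1^*,z_2')\le k$ pins $z_2^*-z_1^*$ itself to be axial with $d(z_1^*,z_2^*)\in\{2k-1,2k\}$.

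The main obstacle is then the following geometric claim, which I would verify by a direct $l^1$ computation: $B_{\Z^2}[z_1^*,k]\cap B_{\Z^2}[z_2^*,k]$ is a single lattice point when $d(z_1^*,z_2^*)=2k$, and consists of exactly two lattice points at mutual distance $1$ when $d(z_1^*,z_2^*)=2k-1$. Either conclusion contradicts the hypothesis that distinct $x,y$ both lie in this intersection with $d(x,y)>3k-n$: in the first case $x=y$ is forced, while in the second $d(x,y)\le 1$, and since $d(z_1^*,z_2^*)=2k-1$ requires $n=3k-1$ we then have $3k-n=1$, so $d(x,y)\le 3k-n$, violating the hypothesis. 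This contradiction gives $z_2'=z_2^*$ and hence $d(z_1^*,z_2^*)\le k$, so $\sigma\in\vr{\Z^2}{k}$.

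Finally, for the maximality of $\sigma$, any candidate $w\in\Z^2\setminus\sigma$ extending $\sigma$ to a larger simplex must satisfy $d(w,x),d(w,y)\le k$, hence $w\in B_{\Z^2}[x,k]\cap B_{\Z^2}[y,k]$. If $\pi_n(w)\in\tau$, the uniqueness of lifts from Step~1 forces $w=z^*\in\sigma$, contradicting $w\notin\sigma$; if $\pi_n(w)\notin\tau$, then $\tau\cup\{\pi_n(w)\}$ is a strictly larger simplex in $\vr{\pi_n(\Z^2)}{k}$, contradicting the maximality of $\tau$. This will conclude the proof that $\sigma$ is a facet of $\vr{\Z^2}{k}$ with $\pi_n(\sigma)=\tau$.
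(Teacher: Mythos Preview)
Your proof is correct and follows the same three-step skeleton as the paper (lift $\tau$ to a set $\sigma\subseteq B[x,k]\cap B[y,k]$; show $\diam(\sigma)\le k$; show maximality), but your argument for the diameter step is a genuine variant. The paper argues coordinate-wise: given two lifted points $z_x,z'_x$, it splits into the cases where they do or do not share a coordinate, and in each case bounds from below the distance from $z_x$ to any competing lift $z''\in[z'_x]\setminus\{z'_x\}$, the shared-coordinate case using the geometric bound $d(z_x,z'_x)\le 2k-d(x,y)$ on the intersection $B[x,k]\cap B[y,k]$. You instead reverse the roles of $\{x,y\}$ and $\{z_1^*,z_2^*\}$: assuming a bad lift exists, you pin down that $z_1^*,z_2^*$ are collinear at distance $2k$ or $2k-1$, compute that $B[z_1^*,k]\cap B[z_2^*,k]$ then contains at most two points at distance $\le 1$, and observe that $x,y$ both lie in this set, contradicting $d(x,y)>3k-n$. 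Your route makes the role of the hypothesis $d(x,y)>3k-n$ more transparent (it is used once, at the very end) and avoids the somewhat implicit ``elementary geometry'' bound in the paper's shared-coordinate case; the paper's route has the advantage of never needing the explicit description of $B[z_1^*,k]\cap B[z_2^*,k]$. Your maximality paragraph is also more explicit than the paper's, which simply appeals to Lemma~\ref{lemma:unique_pre}.
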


\begin{proof}
Let $B[x,k]=B_{\Z^2}[x,k]$.
Fix a facet $\tau\in\vr{\pi_n(\Z^2)}{k}$ containing $[x]$ and $[y]$.
By Lemma~\ref{lemma:unique_pre}, $\pi_n(B[x, k])=B_{\pi_n(\Z^2)}[[x], k]$ for each $x\in \Z^2$ since $n>2k$.
Hence, $\tau\subseteq\pi_n(B[x, k])\cap \pi_n(B[y, k])=\pi_n(B[x, k]\cap B[y, k])$.

For each $[z]\in \tau$, Lemma~\ref{lemma:unique_pre} guarantees a unique representative $z_x$ in $[z]$ such that $d(x, z_x)\leq k$.
Note that $y_x=y$ since $1\leq d(x,y)\leq k$.
Define $\sigma=\{x\}\cup \{z_x: [z]\in \tau \text{ and } z\neq x\}$.
It is clear that $\tau=\pi_n(\sigma)$ and that $\sigma\subseteq B[x,k]$.
We will show that $\sigma$ is a facet in $\vr{\Z^2}{k}$.

First, we show that $\sigma\subseteq B[x,k]$ furthermore satisfies $\sigma\subseteq B[x, k]\cap B[y, k]$.
Let $z_x$ be an arbitrary element of $\sigma$, with $z_x\in [z]$ for some $[z]\in \tau \subseteq \pi_n(B[x, k]\cap B[y, k])$.
So there must be some $z'\in [z]$ with $z'\in B[x, k]\cap B[y, k]$. By Lemma~\ref{lemma:unique_pre}, $z_x=z'$.
Hence $z_x\in B[x, k]\cap B[y, k]$, as desired.

We now show that $\sigma$ is a simplex in $\vr{\Z^2}{k}$.
Let $[z]$ and $[z']$ be arbitrary elements of $\tau$, meaning that $z_x$ and $z'_x$ are arbitrary elements of $\sigma$.
Since $\tau\in\vr{\pi_n(\Z^2)}{k}$, we have $d([z], [z'])\leq k$.
Clearly $d(z_x, z_x')\leq 2k$. 
We claim that for any $z''\in[z'_x]\setminus\{z'_x\}$, we have $d(z_x,z'')\ge k+1$.
There are two cases.
Suppose first that $z_x$ and $z_x'$ have same first or second coordinates.
Since $z_x, z'_x\in\sigma \subseteq B[x, k]\cap B[y, k]$ have the same first or second coordinates, elementary geometry gives $d(z_x, z'_x)\leq 2k-d(x,y)$.
For any element $z''\in [z'_x]\setminus\{z'_x\}$, the inequality $d(z'_x, z'')\geq n$ with the triangle inequality give $d(z_x, z'')\geq n-2k+d(x,y)>k$.
For the second case, suppose that $z_x$ and $z_x'$ are not in the same vertical or horizontal line. Let  $z_x=(z_1, z_2)$ and $z_x'=(z_1', z_2')$.
Then $1\leq |z_1-z_1'|\leq 2k-1$ and also $1\leq |z_2-z_2'|\leq  2k-1$.
Pick $z''\in [z'_x]\setminus\{z'_x\}$. Without loss of generality, suppose $z''_2\neq z'_2$.
Then by triangle inequality,  \[|z_2-z_2''|\geq |z_2'-z_2''|-|z_2-z_2'|\geq n-(2k-1)\geq k.\] 
This shows that $d(z_x, z'')= |z_1-z_1'|+|z_2-z_2''|\geq 1+k$.
Lemma~\ref{lemma:unique_pre} therefore implies that $d(z_x, z'_x)= d([z], [z'])\le k$, and so $\sigma \in \vr{\Z^2}{k}$.

Lastly, we show that $\sigma$ is maximal.
Since $n> 3k-d(x,y) \ge 2k$, Lemma~\ref{lemma:unique_pre} implies that $\sigma=\{x\}\cup \{z_x: [z]\in \tau \text{ and } z\neq x\}$ is a maximal simplex in $\vr{\Z^2}{k}$.
\end{proof}

\begin{lemma}
\label{lemma:emptyintersection} 
Fix different $x=(x_1,x_2)$ and $y=(y_1,y_2)$ in $\Z^2$.
If $|x_1-y_1|<n-2k$ and $|x_2-y_2|<n-2k$, then $\pi_n (B_{\Z^2}[x,k]\cap B_{\Z^2}[y,k]) =\pi_n(B_{\Z^2}[x,k]) \cap \pi_n(B_{\Z^2}[y,k])$.
\end{lemma}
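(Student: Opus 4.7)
The plan is to prove the two inclusions separately, with the forward inclusion being trivial and the reverse inclusion reducing to a coordinate-wise triangle inequality argument.

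First I would dispatch the inclusion $\pi_n(B_{\Z^2}[x,k]\cap B_{\Z^2}[y,k]) \subseteq \pi_n(B_{\Z^2}[x,k]) \cap \pi_n(B_{\Z^2}[y,k])$, which holds for any map of sets: the image of an intersection is always contained in the intersection of the images. This requires no use of the hypotheses.

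For the reverse inclusion, I would take an arbitrary $[z]$ in $\pi_n(B_{\Z^2}[x,k]) \cap \pi_n(B_{\Z^2}[y,k])$ and produce representatives $z' \in [z]\cap B_{\Z^2}[x,k]$ and $z'' \in [z]\cap B_{\Z^2}[y,k]$. Writing $z'' = z' + (an, bn)$ for some integers $a, b$ (since both lie in the same $G_n$-orbit), the goal is to show $(a,b)=(0,0)$, so that $z'=z''$ lies in both balls simultaneously, which gives $[z]\in\pi_n(B_{\Z^2}[x,k]\cap B_{\Z^2}[y,k])$. Writing $z'=(z'_1,z'_2)$, from $z'\in B_{\Z^2}[x,k]$ we have $|z'_1-x_1|\le k$ and from $z''\in B_{\Z^2}[y,k]$ we have $|z'_1+an-y_1|\le k$, so the triangle inequality gives
\[|an-(y_1-x_1)|\le |z'_1+an-y_1|+|z'_1-x_1|\le 2k.\]
If $a\neq 0$, then $|an|\ge n$, and hence
\[|an-(y_1-x_1)|\ge |an|-|y_1-x_1|\ge n-|y_1-x_1|>2k,\]
using the hypothesis $|x_1-y_1|<n-2k$. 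This contradicts the previous inequality, forcing $a=0$. The identical argument applied to the second coordinate, using $|x_2-y_2|<n-2k$, forces $b=0$. Hence $z'=z''$ and the inclusion is established.

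I do not anticipate any real obstacle: the content is a two-line coordinate-wise triangle inequality. The only thing to be careful about is distinguishing which $l^1$-distance bound gives coordinate-wise control (namely, that the $l^1$ distance dominates each individual coordinate difference), and checking that the strict inequality hypothesis is exactly what is needed to make the two estimates incompatible when $a$ or $b$ is nonzero.
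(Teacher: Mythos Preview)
Your proposal is correct and follows essentially the same route as the paper's proof: both handle the forward inclusion trivially and, for the reverse inclusion, pick representatives of $[z]$ in each ball and use a coordinate-wise triangle inequality to force the representatives to coincide. The only cosmetic difference is that the paper first invokes Lemma~\ref{lemma:unique_pre} to name unique representatives $z_x,z_y$, whereas you take arbitrary representatives and directly show the $G_n$-shift $(an,bn)$ between them must vanish; your version is thus slightly more self-contained but otherwise identical in content.
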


\begin{proof}
The assumptions imply $n>2k$.
Let $B[x,k]=B_{\Z^2}[x,k]$.
Let $[z]\in \pi_n(B[x,k]\cap B[y,k])$.
Then there exists some $z'\in[z]$ with $z'\in B[x,k]\cap B[y,k]$.
Since $z'\in B[x,k]$, then $[z]\in \pi_n(B[x,k])$.
Similarly, $[z]\in \pi_n(B[y,k])$.
So $\pi_n(B[x,k]\cap B[y,k])\subseteq\pi_n(B[x,k]) \cap \pi_n(B[y,k])$.

Next, let $[z]\in\pi_n(B[x,k]) \cap \pi_n(B[y,k])=B_{\pi_n(\Z^2)}[[x],k]\cap B_{\pi_n(\Z^2)}[[y],k]$.
By Lemma~\ref{lemma:unique_pre}, there exist unique elements $z_x, z_y\in[z]$ such that $d(z_x, x)\leq k$ and $d(z_y,y)\leq k$.
The horizontal coordinates of $z_x$ and $z_y$ must be equal, since otherwise they would differ by at least $n$, which would give that the horizontal coordinates of $x$ and $y$ would differ by at least $n-2k$, a contradiction.
Similarly, the vertical coordinates of $z_x$ and $z_y$ must be equal.
Hence $z_x=z_y\in B[x,k]\cap B[y,k]$, giving $[z]\in \pi_n(B[x,k]\cap B[y,k])$.
So $\pi_n(B[x,k]) \cap \pi_n(B[y,k])\subseteq \pi_n (B[x,k]\cap B[y,k])$, as desired.
\end{proof}

For a simplicial complex $K$, we let $M(K)$ denote the collection of facets in $K$.
By Lemma~\ref{lemma:prefacets}, every facet $\tau\in\vr{\pi_n(\Z^2)}{k}$ is either the image of a facet in $\vr{\Z^2}{k}$, or there exists a pair of distinct vertices $x=(x_1,x_2)$, $y=(y_1,y_2)\in\tau$ such that
\begin{equation}
\label{eq:intersection}
\pi_n(B_{\Z^2}[x, k]\cap B_{\Z^2}[y, k]) \neq \pi_n(B_{\Z^2}[x, k])\cap \pi_n(B_{\Z^2}[y, k]),
\end{equation}
or every pair of vertices in $\tau$ has distance $\leq 3k-n$.
By Lemma~\ref{lemma:emptyintersection}, the equation~\eqref{eq:intersection} implies that $n-2k\leq |x_i-y_i|\leq k$ for either $i=1$ or $i=2$.

When $n=3k$, this forces $|x_i-y_i|=k$, meaning that two vertices must differ by $k$ in one coordinate.
It is easy to verify that the only simplices in this case are those consisting of three vertices equally spaced by $k$ along a row or column of $T_{3k,3k}$; explicitly, these are of the form 
\[\{([a], [b]), ([a+k], [b]), ([a+2k], [b])\}\text{ or }\{([a], [b]), ([a], [b+k]), ([a], [b+2k])\},\]
with all coordinates taken modulo $3k$.
These simplices are maximal in $\vr{T_{3k,3k}}{k}$ and their projections on the non-constant coordinate are maximal in $\vr{C_{3k}}{k}$.

Similarly, when $n=3k-1$, the inequality becomes $k-1\leq |x_i-y_i|\leq k$, and the non-liftable facets in this case are the tetrahedra
\begin{center}
$\{([a], [b]), ([a+k], [b]), ([a+2k-1], [b]), ([a+2k], [b])\}$\\\text{ or }\\$\{([a], [b]), ([a], [b+k]), ([a], [b+2k-1]), ([a], [b+2k])\}$.
\end{center}
These tetrahedra are also maximal in $\vr{T_{3k-1,3k-1}}{k}$ and their projections on the non-constant coordinate are maximal in $\vr{C_{3k-1}}{k}$.
For a proof of these results, we direct the reader to Lemma~\ref{lemma:appendix}.

Let $M_{n, k}$ denote the collection of maximal simplices in $\vr{T_{n, n}}{k}$ inherited from $\vr{\Z^2}{k}$, i.e.\ 
\[M_{n, k} = \{\pi_n (\sigma): \sigma \text{ is a facet in }\vr{\Z^2}{k}\}.\] 

Additionally, let $N_{3k,k}$ be the collection of triangles in the quotient space, i.e.\ 
\[N_{3k,k}=\left\{\substack{\{([a], [b]), ([a+k], [b]), ([a+2k], [b])\} \\ \{([a], [b]), ([a], [b+k]), ([a], [b+2k])\}}~|~0\leq a, b \le 3k-1\right\}.\]

And we let $N_{3k-1,k}$ be the collection of tetrahedra in the quotient space, i.e.\ 
\[N_{3k-1,k}=\left\{\substack{\{([a], [b]), ([a+k], [b]), ([a+2k-1], [b]), ([a+2k], [b])\} \\ \{([a], [b]), ([a], [b+k]), ([a], [b+2k-1]), ([a], [b+2k])\}}~|~0\leq a, b \leq 3k-2\right\}.\]

\begin{figure}[h]
\begin{center}
\begin{tikzpicture}
\def \n {8} 
\def \radius {1.6cm} 

\foreach \s in {1,...,\n}
{
  \node[draw, circle, fill=black, inner sep=2pt, minimum size=8pt] 
    (v\s) at ({360/\n * (\s - 1)}:\radius) {};

  \node[font=\small] at ({360/\n * (\s - 1)}:\radius + 0.5cm) {$v_{\s}$};
 }

 \foreach \s in {1,...,\n}
 {
  \pgfmathtruncatemacro{\next}{mod(\s, \n) + 1}
  \draw (v\s) -- (v\next);
 }

 \draw[blue, thick] (v6) -- (v7);
 \draw[blue, thick] (v7) -- (v1);
 \draw[blue, thick] (v4) -- (v1);
 \draw[blue, thick] (v4) -- (v6);
 \draw[blue, thick] (v4) -- (v7);
 \draw[blue, thick] (v1) -- (v6);

\begin{scope}[xshift=5.0cm]
\def \n {9} 
\def \radius {1.6cm} 

\foreach \s in {1,...,\n}
{
  \node[draw, circle, fill=black, inner sep=2pt, minimum size=8pt] 
    (v\s) at ({360/\n * (\s - 1)}:\radius) {};

  \node[font=\small] at ({360/\n * (\s - 1)}:\radius + 0.5cm) {$v_{\s}$};
 }

 \foreach \s in {1,...,\n}
 {
  \pgfmathtruncatemacro{\next}{mod(\s, \n) + 1}
  \draw (v\s) -- (v\next);
 }

\draw[blue, thick] (v1) -- (v7);
 \draw[blue, thick] (v4) -- (v1);
 \draw[blue, thick] (v4) -- (v7);

\end{scope}

\end{tikzpicture}
\caption{Example simplices in $N_{8,3}$ and $N_{9,3}$}
\end{center}
\end{figure}

\begin{lemma}
\label{lem:facets_3kor3k-1}
For any $k\geq 2$, the collection of facets in $\vr{T_{3k,3k}}{k}$ is $M_{3k,k}\cup N_{3k,k}$.
Also, for any $k\geq 3$, the collection of facets in $\vr{T_{3k-1,3k-1}}{k}$ is $M_{3k-1,k}\cup N_{3k-1,k}$.
\end{lemma}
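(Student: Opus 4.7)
The plan is to establish the two containments $M_{n,k} \cup N_{n,k} \subseteq M(\vr{T_{n,n}}{k})$ and $M(\vr{T_{n,n}}{k}) \subseteq M_{n,k} \cup N_{n,k}$, and then apply them in each of the two regimes.

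For the forward containment, I would first observe that $M_{n,k} \subseteq M(\vr{T_{n,n}}{k})$ is immediate from Lemma~\ref{lemma:imageoffacets}, whose hypothesis $n > 2k+1$ reduces to $k \ge 2$ when $n = 3k$ and to $k \ge 3$ when $n = 3k - 1$. For the triangles and tetrahedra in $N_{n,k}$, a direct computation shows that each has diameter exactly $k$ in the quotient metric, so it is indeed a simplex of $\vr{T_{n,n}}{k}$. To check maximality, I would argue by contradiction: if a vertex $[u,v]$ could be added, Lemma~\ref{lemma:unique_pre} provides a representative in $\Z^2$ compatible with any chosen vertex of the triangle or tetrahedron, and a short case analysis on admissible grid positions forces $[u,v]$ to already lie in the listed set.

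For the reverse containment, the plan is to appeal to the trichotomy stated just before the lemma, which is a consequence of Lemma~\ref{lemma:prefacets}: every facet $\tau \in \vr{T_{n,n}}{k}$ is either (a) the image of a facet in $\vr{\Z^2}{k}$, (b) contains a pair $x,y$ violating the intersection identity~\eqref{eq:intersection}, or (c) has all pairwise distances bounded by $3k - n$. Case (a) places $\tau$ directly in $M_{n,k}$. Case (c) is ruled out in each regime: for $n = 3k$ it forces $\tau$ to be a single vertex, while for $n = 3k - 1$ it forces $\tau$ to sit inside an edge (since the bipartite graph $T_{n,n}$ has no triangle whose vertices are pairwise at distance $1$), and neither is maximal at scale $k$ because any edge extends to a triangle once $k \ge 2$.

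The heart of the argument is case (b). By Lemma~\ref{lemma:emptyintersection}, the offending pair $x,y \in \tau$ satisfies $|x_i - y_i| \ge n - 2k$ in some coordinate $i$; combined with $d(x,y) \le k$, this pins the pair down to $|x_i - y_i| = k$ (when $n = 3k$) or $|x_i - y_i| \in \{k-1, k\}$ (when $n = 3k - 1$), with the transverse coordinate differing by $0$ or $1$. From here I would run a combinatorial classification: starting from such a long pair, I would show that every additional vertex of $\tau$ must share the same row or column, and then use the structure of maximal simplices in the cyclic complex $\vr{C_n}{k}$ to pin $\tau$ down as one of the elements of $N_{n,k}$. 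I expect this classification to be the main obstacle, since it requires ruling out hybrid configurations mixing horizontal and vertical long pairs simultaneously; the detailed verification is carried out in Lemma~\ref{lemma:appendix}.
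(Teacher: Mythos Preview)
Your forward containment and your handling of case~(c) are fine, and routing through the trichotomy is a reasonable idea. The gap is in case~(b). You claim that once a single pair $x,y\in\tau$ violates the intersection identity, ``every additional vertex of $\tau$ must share the same row or column''. This is false. For $n=3k$ take $x=([0],[0])$, $y=([k],[0])$; the identity fails for this pair (e.g.\ $([2k],[0])$ lies in $\pi_n(B[x,k])\cap\pi_n(B[y,k])$ but not in $\pi_n(B[x,k]\cap B[y,k])$). Now set $z=([1],[1])$: the set $\{x,y,z\}$ has diameter $k$ and hence extends to a facet $\tau$ that is \emph{not} contained in any row or column. That facet actually lies in $M_{3k,k}$, detected via the \emph{different} pair $x,z$ (both coordinate differences are $<k=n-2k$, so Lemmas~\ref{lemma:emptyintersection} and~\ref{lemma:prefacets} apply). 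The trichotomy cases overlap; knowing that one pair is bad does not stop $\tau$ from belonging to $M_{n,k}$, so your case~(b) conclusion cannot be $\tau\in N_{n,k}$.

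The paper sidesteps this by using a different dichotomy: either $\tau$ contains two vertices differing in \emph{both} coordinates, or it does not. In the first case one picks such a pair and shows the identity \emph{holds} for it (Lemma~\ref{lemma:emptyintersection} covers both differences $<n-2k$; for $n=3k-1$ the borderline sub-case with one difference equal to $k-1$ and the other equal to $1$ is handled by a short direct check that the identity still holds), whence Lemma~\ref{lemma:prefacets} gives $\tau\in M_{n,k}$. In the second case $\tau$ sits in a single row or column and Lemma~\ref{lemma:appendix} finishes. Your approach can be repaired by explicitly assuming $\tau\notin M_{n,k}$ before entering case~(b), which forces \emph{every} pair at distance $>3k-n$ to violate the identity; but even then, for $n=3k-1$ you still need the paper's extra verification to exclude pairs whose transverse coordinate differs by $1$, a configuration your outline lists as possible without indicating how it is eliminated.
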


\begin{proof}
We have $M_{3k,k}\subseteq M(\vr{T_{3k,3k}}{k})$ by Lemma~\ref{lemma:imageoffacets} (with $3k=n>2k+1$ since $k\ge 2$).
Also, we have the containment $N_{3k,k}\subseteq M(\vr{T_{3k,3k}}{k})$.
Similarly, we have $M_{3k-1,k}\subseteq M(\vr{T_{3k-1,3k-1}}{k})$ by Lemma~\ref{lemma:imageoffacets} (with $3k-1=n>2k+1$ since $k\ge 3$), along with the containment $N_{3k-1,k}\subseteq M(\vr{T_{3k-1,3k-1}}{k})$.

Next we pick a facet $\tau$ in the complex $\vr{T_{3k,3k}}{k}$.
We'll show that $\tau$ is in $M_{3k,k}\cup N_{3k,k}$.

First we suppose that there exist $[x] =([a], [b])$ and $[y]=([c], [d])$ in $\tau$ with $[a]\neq [c]$ and $[b]\neq [d]$.
Without loss of generality, we assume that $x=(a, b)$ and $y=(c, d)$ and $d(x, y)=d([x], [y])\le k$.
Clearly $a\neq c$ and $b\neq d$; hence $1\leq d(a, c)\leq k-1$ and $1\leq d(b, d)\leq k-1$.
By Lemma~\ref{lemma:emptyintersection} (with $n-2k=k$), we have $\pi_n (B_{\Z^2}[x, k]\cap B_{\Z^2}[y, k]) =\pi_n(B_{\Z^2}[x, k]) \cap \pi_n(B_{\Z^2}[y, k])$.
Then by Lemma~\ref{lemma:prefacets}, we have $\tau=\pi_n(\sigma)$ for some facet $\sigma$ in $\vr{\Z^2}{k}$.
Therefore $\tau\in M_{3k,k}$.


Otherwise, all elements in $\tau$ have the same first coordinate or second coordinate.
Without loss of generality we assume that the first coordinate of all elements in $\tau$ is $[a]$.
The full subcomplex of $\vr{T_{3k,3k}}{k}$ whose vertices have first coordinate $[a]$ is isomorphic to the clique complex $\vr{C_{3k}}{k}$, where $C_{3k}$ is the cyclic graph with $3k$ vertices.
Of the maximal simplices in this copy of $\vr{C_{3k}}{k}$, only the ones of the form $\{([a], [b]), ([a], [b+k]), ([a], [b+2k])\}$ for some $0\leq b\leq 3k-1$ are maximal in $\vr{T_{3k,3k}}{k}$.
Hence $\tau\in N_{3k,k}$.
This completes the proof of the first statement of the lemma. 

\medskip 

For the second statement, it remains to show that a facet $\tau$ in $\vr{T_{3k-1,3k-1}}{k}$ is in $M_{3k-1,k}\cup N_{3k-1,k}$.
Consider the case where there exist $[x] =([a], [b])$ and $[y]=([c], [d])$ in $\tau$ with $[a]\neq [c]$ and $[b]\neq [d]$.
Then the same argument as before applies, \emph{except} we cannot apply Lemma~\ref{lemma:emptyintersection} when $d(x, y)=k$ with either $d(a, c)=1$ or $d(b, d)=1$.
Without loss of generality, assume $d(a, c)=1$ and $d(b, d)=k-1$.
We show that still in this case $\pi_n (B_{\Z^2}[x, k]\cap B_{\Z^2}[y, k]) =\pi_n(B_{\Z^2}[x, k]) \cap \pi_n(B_{\Z^2}[y, k])$, in which case $\tau$ is in $M_{3k-1,k}$ by Lemma~\ref{lemma:prefacets}.
Moreover, we assume that $a<c$ and $b<d$, and then $c=a+1$ and $d=b+k-1$.
Other cases can be proved similarly.
Pick an arbitrary $[z]\in B_{\Z^2}[y, k]$ with $d((z_1, z_2), y)=|z_1-c|+|z_2-d|\leq k$. 
It is sufficient to show that $d(z', x)>k$ for any $z'\in [z]$ with $z'\neq (z_1, z_2)$.
We show this holds for $z'=(z_1+3k-1, z_2)$, and similar approach can be applied for $z'$ in different forms.
Notice that \[d(z', x)=|z_1+3k-1-a|+|z_2-b|=|z_1-c+3k|+|z_2-d+k-1|\geq 2k.\]   

In the remaining case when all elements of $\tau$ have the same first coordinate or second coordinate, assume again that the first coordinate of all elements in $\tau$ is $[a]$.
The full subcomplex of $\vr{T_{3k-1,3k-1}}{k}$ whose vertices have first coordinate $[a]$ is isomorphic to the clique complex $\vr{C_{3k-1}}{k}$.
Of the maximal simplices in this copy of $\vr{C_{3k}}{k}$, only the ones in $N_{3k-1,k}$ are maximal in $\vr{T_{3k-1,3k-1}}{k}$.
\end{proof}




\begin{lemma}
\label{lem:facets_n>3k}
For $n> 3k$, the collection of facets in $\vr{T_{n, n}}{k}$ is $M_{n, k}$.
\end{lemma}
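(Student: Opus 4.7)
The plan is to mimic the first half of the argument in Lemma~\ref{lem:facets_3kor3k-1}, but exploit the strict inequality $n>3k$ to eliminate entirely the exceptional (non-liftable) case that produced the extra simplices $N_{3k,k}$ and $N_{3k-1,k}$ in the boundary cases. Concretely, one containment $M_{n,k}\subseteq M(\vr{T_{n,n}}{k})$ is immediate from Lemma~\ref{lemma:imageoffacets}, since $n>3k$ implies $n>2k+1$ (for $k\ge 1$; the case $k=0$ is trivial). So the real task is the reverse containment: every facet in $\vr{T_{n,n}}{k}$ is the image under $\pi_n$ of a facet of $\vr{\Z^2}{k}$.

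Let $\tau$ be a facet of $\vr{T_{n,n}}{k}$. Since every vertex of $T_{n,n}$ has neighbors at distance $\le k$, the facet $\tau$ has at least two distinct vertices $[x],[y]$. By Lemma~\ref{lemma:unique_pre} (applicable since $n>3k>2k$), I may choose representatives $x=(x_1,x_2)$ and $y=(y_1,y_2)$ in $\Z^2$ with $d(x,y)=d([x],[y])\le k$, and in particular $|x_i-y_i|\le k$ for $i=1,2$. The strict inequality $n>3k$ gives $k<n-2k$, so $|x_i-y_i|<n-2k$ for both coordinates. Lemma~\ref{lemma:emptyintersection} then yields
\[
\pi_n\!\left(B_{\Z^2}[x,k]\cap B_{\Z^2}[y,k]\right)=\pi_n(B_{\Z^2}[x,k])\cap \pi_n(B_{\Z^2}[y,k]).
\]

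Since $x\neq y$, we have $1\le d(x,y)\le k$, and $3k-n<0<d(x,y)$ because $n>3k$. Therefore the hypotheses of Lemma~\ref{lemma:prefacets} are met, and $\tau=\pi_n(\sigma)$ for some facet $\sigma$ of $\vr{\Z^2}{k}$. This places $\tau$ in $M_{n,k}$, completing the proof.

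The main (and only mild) obstacle is checking that the quantitative hypotheses of the two lemmas from the previous subsection line up correctly; once $n>3k$ is used to force both $|x_i-y_i|<n-2k$ and $3k-n<d(x,y)$, there is no residual case analysis of the ``same-row/same-column'' type that produced $N_{n,k}$ at the critical scales $n\in\{3k-1,3k\}$. Thus the proof is essentially a direct application of Lemmas~\ref{lemma:unique_pre}, \ref{lemma:imageoffacets}, \ref{lemma:prefacets}, and~\ref{lemma:emptyintersection}, with the strict inequality $n>3k$ doing all of the extra work.
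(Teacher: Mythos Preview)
Your argument is correct. The paper actually states Lemma~\ref{lem:facets_n>3k} without proof, evidently regarding it as the easy case implicit in the discussion preceding Lemma~\ref{lem:facets_3kor3k-1}: when $n>3k$, the exceptional alternative in equation~\eqref{eq:intersection} is ruled out (since it would force $n-2k\le |x_i-y_i|\le k$, impossible when $n-2k>k$), and the remaining alternative ``every pair at distance $\le 3k-n$'' is vacuous because $3k-n<0$. Your write-up makes this explicit via Lemmas~\ref{lemma:imageoffacets}, \ref{lemma:emptyintersection}, and~\ref{lemma:prefacets}, and is in fact a bit cleaner than the proof of Lemma~\ref{lem:facets_3kor3k-1} because you need no separate treatment of the ``same row/column'' case: any two distinct vertices of $\tau$ already satisfy the hypotheses of Lemma~\ref{lemma:prefacets} once $n>3k$.
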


\subsection{Homotopy types}
\label{ssec:homotopy}

In Theorem~\ref{thm:homotopy_3k} below we will prove that for any $k\geq 2$, we have the homotopy equivalence $\vr{T_{3k,3k}}{k}\simeq \bigvee_{6k^2-1} S^2$.
The geometric flavor of our proof is as follows.
Recall from Lemma~\ref{lem:facets_3kor3k-1} that the set of facets in $\vr{T_{3k,3k}}{k}$ is $M_{3k,k}\cup N_{3k,k}$.
By Theorem~\ref{thm:torus_homotopy}, the subcomplex generated by $M_{3k,k}$ is homotopy equivalent to a torus.
To this torus, we glue on the $6k$ triangles in $N_{3k,k}$ along their boundaries.
Of these, $3k$ triangles have their boundaries wrapped once around the longitudinal circle, and the remaining $3k$ triangles have their boundaries wrapped once around the meridional circle.
To understand the resulting homotopy type, we note (see Lemma~\ref{lemma:subcomplex_3k}) that gluing on only one longitudinal triangle and only one meridional triangle produces the $2$-sphere.
Then, gluing on each of the remaining $6k-2$ triangles adds an additional $2$-sphere to the wedge summand, producing $\bigvee_{6k^2-1} S^2$ in total.

In order to complete the inductive format of this proof, we will need the following result describing the homotopy type of a complex by splitting it into two subcomplexes, with a proof in~\cite{GSS22}.

\begin{lemma}
\label{cup_simp}
Suppose a simplicial complex $K=K_1\cup K_2$ satisfies that the inclusion maps $\imath_1: K_1\cap K_2\rightarrow K_1$ and $\imath_2: K_1\cap K_2\rightarrow K_2$ are both null-homotopic.
Then 
\[K\simeq K_1\vee K_2\vee\Sigma(K_1\cap K_2).\]
\end{lemma}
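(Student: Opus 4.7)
The plan is to realize $K$ as a homotopy pushout and then exploit the two null-homotopies to collapse each half to a cone.

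First I would observe that since $K_1$, $K_2$ are subcomplexes of a simplicial complex with $K=K_1\cup K_2$ and $A := K_1 \cap K_2$ a subcomplex of each, the inclusions $A\hookrightarrow K_i$ are cofibrations. Consequently, the ordinary pushout $K_1\cup_A K_2 = K$ is a homotopy pushout, and therefore $K$ is homotopy equivalent to the double mapping cylinder
\[
D \;:=\; K_1 \,\sqcup\, (A\times[0,1]) \,\sqcup\, K_2 \;\Big/\; \bigl((a,0)\sim \imath_1(a),\ (a,1)\sim \imath_2(a)\bigr).
\]
This is the standard replacement of a CW pushout by its homotopy pushout.

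Next I would use the hypothesis that $\imath_1$ and $\imath_2$ are null-homotopic. Pick null-homotopies $H_i\colon A\times[0,1]\to K_i$ with $H_i(a,0)=\imath_i(a)$ and $H_i(a,1)=x_i$ constant. A standard result (homotopic attaching maps yield homotopy equivalent adjunction spaces, provided the domain inclusions are cofibrations) lets me replace $\imath_1$ and $\imath_2$ in the definition of $D$ by the constant maps at $x_1$ and $x_2$, without changing the homotopy type. Concretely, one can build the homotopy equivalence by using $H_1$ and $H_2$ to absorb the ends of the cylinder $A\times[0,1]$ into telescopes that terminate at the constant maps. The resulting space is
\[
D' \;=\; K_1 \,\sqcup\, (A\times[0,1]) \,\sqcup\, K_2 \;\Big/\; \bigl((a,0)\sim x_1,\ (a,1)\sim x_2\bigr).
\]

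Finally I would recognize $D'$ geometrically. Collapsing $A\times\{0\}$ to $x_1$ and $A\times\{1\}$ to $x_2$ turns $A\times[0,1]$ into the unreduced suspension $\Sigma A$ with cone points $x_1$ and $x_2$, glued to $K_1$ and $K_2$ respectively at single points. Choosing a path $\gamma$ in $\Sigma A$ from $x_1$ to $x_2$ (which exists as soon as $A\neq\emptyset$) and collapsing $\gamma$, which is a contractible subcomplex of a CW complex, gives a further homotopy equivalence $\Sigma A/\gamma \simeq \Sigma A$ and simultaneously identifies $x_1$ with $x_2$, producing a genuine wedge sum:
\[
D' \;\simeq\; K_1 \,\vee\, \Sigma A \,\vee\, K_2 \;=\; K_1 \,\vee\, K_2 \,\vee\, \Sigma(K_1\cap K_2).
\]
Combining the three homotopy equivalences proves the lemma.

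The main obstacle, and the step that most needs care, is the middle one: rigorously passing from $D$ to $D'$ using the null-homotopies. It is intuitive but requires the cofibration hypothesis and a careful cylinder-telescope argument; this is precisely the content that the paper cites from \cite{GSS22}, so in the write-up I would invoke that reference rather than reprove the general gluing lemma. The first and third steps are essentially formal once the cofibration property and the convention for unreduced suspension are in place.
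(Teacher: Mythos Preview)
The paper does not actually prove this lemma; it simply states the result and cites \cite{GSS22} for a proof. Your proposal supplies the standard homotopy-pushout argument that presumably underlies that citation, and the argument is correct: the three ingredients---that a union of subcomplexes along their intersection is a homotopy pushout because subcomplex inclusions are cofibrations, that homotopy pushouts are invariant under homotopy of the two attaching maps, and that the double mapping cylinder on two constant maps is the unreduced suspension $\Sigma A$ wedged to $K_1$ and $K_2$ at its cone points---are all standard. The only spots requiring a word of care are the implicit assumption $A\neq\emptyset$ (needed for the path $\gamma$ you collapse) and the well-pointedness used when passing from unreduced to reduced wedges and suspensions; both are automatic for the simplicial complexes appearing in the paper's applications, so nothing is missing.
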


\begin{theorem}
\label{thm:torus_homotopy}
Assume that $k\geq 2$.
For any $n> 2k$, let $K$ be the complex with $M_{n, k}$ being the collection of maximal simplices.
Then $K$ is homotopy equivalent to the torus $\T^2$.

Hence when $k\ge 2$ and $n>3k$, $\vr{T_{n, n}}{k}$ is homotopy equivalent to the torus $\T^2$.
\end{theorem}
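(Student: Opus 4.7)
The plan is to realize $K$ as the nerve of a natural open cover of the torus $\T^2 \cong \R^2/G_n$ and then invoke the nerve theorem. For each vertex $[v]\in T_{n,n}$ and any lift $v'\in\Z^2$ of $[v]$, define $U_{[v]} := \pi_n\bigl(B_{\R^2}(v',\tfrac{k+1}{2})\bigr)$; this coincides with the open $l^1$-ball of radius $\frac{k+1}{2}$ around $[v]$ in the quotient metric on $\T^2$. Let $\cU = \{U_{[v]}\}_{[v]\in T_{n,n}}$. Since every point of $\R^2$ lies within $l^1$-distance $\frac{k+1}{2}\ge 1$ of some integer lattice point, $\cU$ is an open cover of $\T^2$.

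First I would show that $N(\cU)=K$. By Corollary~\ref{Corollary:NerveZ2}, a finite set $\sigma\subseteq\Z^2$ is a simplex of $\vr{\Z^2}{k}$ if and only if $\bigcap_{v\in\sigma}B_{\R^2}(v,\tfrac{k+1}{2})\neq\emptyset$. A short unfolding shows that $\bigcap_{i}U_{[v_i]}$ is nonempty in $\T^2$ if and only if some choice of lifts $v_i^*\in[v_i]$ yields $\bigcap_{i}B_{\R^2}(v_i^*,\tfrac{k+1}{2})\neq\emptyset$ in $\R^2$. Combining these two statements, $\{[v_1],\ldots,[v_m]\}\in N(\cU)$ exactly when this set is the $\pi_n$-image of a simplex of $\vr{\Z^2}{k}$, which is the same as being a face of some facet in $M_{n,k}$. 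Thus $N(\cU)=K$.

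The main obstacle will be verifying that every nonempty finite intersection $\bigcap_{i}U_{[v_i]}$ is contractible. For this I would fix lifts $v_1^*,\ldots,v_m^*$ so that $V:=\bigcap_{i}B_{\R^2}(v_i^*,\tfrac{k+1}{2})$ is nonempty (a convex, hence contractible, $l^1$-polytope in $\R^2$) and argue that $\pi_n|_V : V\to\bigcap_{i}U_{[v_i]}$ is a homeomorphism. The key lemma is that, under the hypothesis $n>2k$, whenever $q\in\R^2$ satisfies $\pi_n(q)\in\bigcap_{i}U_{[v_i]}$ and $d(q,v_1^*)<\tfrac{k+1}{2}$, one necessarily has $d(q,v_i^*)<\tfrac{k+1}{2}$ for every $i$. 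The proof uses that $d(v_i^*,v_j^*)\le k$ (from $V$ being nonempty together with integrality of $l^1$-distances on $\Z^2$), so that any alternative lift $v_i^\dagger\in[v_i]$ with $d(q,v_i^\dagger)<\tfrac{k+1}{2}$ would satisfy $d(v_i^\dagger,v_i^*)\le 2k<n$, forcing $v_i^\dagger=v_i^*$. Surjectivity onto $\bigcap_{i}U_{[v_i]}$ then follows by translating any chosen lift of a point in the intersection into $B_{\R^2}(v_1^*,\tfrac{k+1}{2})$ via $G_n$, and injectivity holds since $\diam(V)<k+1\le n$.

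With contractibility of all nonempty intersections established, the nerve theorem (Theorem~\ref{thm:nerve}) yields $K=N(\cU)\simeq\T^2$. When $n>3k$, Lemma~\ref{lem:facets_n>3k} tells us $M_{n,k}$ is the complete set of facets of $\vr{T_{n,n}}{k}$, so $K=\vr{T_{n,n}}{k}$ and the second statement is immediate.
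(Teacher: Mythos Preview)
Your proposal is correct and follows essentially the same approach as the paper: both use the nerve of the cover $\cU=\{\pi_n(B_{\R^2}(v,\tfrac{k+1}{2}))\}_{v\in\Z^2}$ of $\R^2/G_n\cong\T^2$, identify $N(\cU)$ with $K$, and invoke the nerve theorem together with Lemma~\ref{lem:facets_n>3k} for the $n>3k$ case. Your treatment of the contractibility of nonempty intersections (via the homeomorphism $\pi_n|_V$ and the lift-uniqueness argument using $n>2k$) is in fact more explicit than the paper's, which simply cites Lemma~\ref{lemma:emptyintersection}.
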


This theorem therefore gives the torus homotopy types in Table~\ref{table:Torus-Grids} for $n>3k$.

\begin{proof}
The topological space $\R^2/G_n$ is homeomorphic to the torus $\T^2$.
Similarly as in the proof of Theorem~\ref{Theorem:MaximalSimplices}, we define $\cU= \{\pi_n(B_{\R^2}(x,\frac{k+1}{2}))\subseteq \R^2/G_n : x\in\Z^2\}$, where here $\pi_n \colon \R^2 \to \R^2/G_n$.
Since $n>2k$ and since $k\geq 2$, the intersection of any finite subcollection of $\cU$ is either empty or contractible by Lemma~\ref{lemma:emptyintersection}.
Also, the nerve complex $N(\cU)$ is isomorphic to $K$.
By the nerve theorem, $N(\cU)$ is homotopy equivalent to $\T^2$, and hence so is $K$.

When $n>3k$, the collection of facets in the complex $\vr{T_{n, n}}{k}$ is $M_{n, k} $ by Lemma~\ref{lem:facets_n>3k}; hence in this case $\vr{T_{n, n}}{k}$ is homotopy equivalent to $\T^2$.
\end{proof}

\begin{lemma}
\label{lemma:subcomplex_3k}
Assume that $k\geq 2$.
Let $\tau_1=\{([0], [0]), ([k], [0]), ([2k], [0]) \}$ and $\tau_2=\{([0], [0]), ([0], [k]), ([0], [2k])\}$ and let $K$ be the simplicial complex with $M(K) = M_{3k,k}\cup \{\tau_1, \tau_2\}$.
Then $K$ is homotopy equivalent to the sphere $S^2$.
\end{lemma}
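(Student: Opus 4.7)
Let $K_0$ be the subcomplex of $K$ with maximal simplices $M_{3k,k}$; by Theorem~\ref{thm:torus_homotopy}, $K_0\simeq\T^2$. The plan is to recognize $K=K_0\cup\tau_1\cup\tau_2$ as the torus with two $2$-disks attached along loops representing the two standard generators of $\pi_1(\T^2)\cong\Z^2$. First I verify that every edge of $\tau_1$ and $\tau_2$ already lies in $K_0$: any two vertices of $\tau_i$ are at distance exactly $k$ in $T_{3k,3k}$, and any pair $u,v\in\Z^2$ with $d(u,v)=k$ lies in a common facet of $\vr{\Z^2}{k}$, namely the $l^1$-ball $B_{\R^2}[c,k/2]\cap\Z^2$ centered at their midpoint $c$ (a facet by Proposition~\ref{Proposition:MaximalSimplices}). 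Projecting gives a facet in $M_{3k,k}$ containing the edge, so $\partial\tau_1,\partial\tau_2\subseteq K_0$ are genuine triangular $1$-cycles.

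Next I identify the homotopy classes of these loops in $K_0\simeq\T^2$. The three vertices of $\tau_1$ are the equally spaced points $([0],[0]),([k],[0]),([2k],[0])$ along the horizontal circle $S^1\times\{[0]\}$ of $\R^2/G_{3k}$. Under the nerve equivalence $K_0\simeq\T^2$ from the proof of Theorem~\ref{thm:torus_homotopy}, the loop $\partial\tau_1$ corresponds to a loop in $\T^2$ traversing these three equally spaced points via short $l^1$-paths, and so wraps once around the horizontal meridian. Thus $[\partial\tau_1]$ is the horizontal generator $a\in\pi_1(\T^2)$; symmetrically $[\partial\tau_2]=b$ is the vertical generator.

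With this, I attach the triangles in sequence. Let $K_1=K_0\cup\tau_1$; since $|\tau_1|\cong D^2$ is the cone on $\partial\tau_1$, the space $K_1$ is the mapping cone of $\partial\tau_1\hookrightarrow K_0$, which up to homotopy is $\T^2\cup_a D^2$. A standard cellular computation on the CW structure $v\cup a\cup b\cup e^2$ of $\T^2$ (with $\partial e^2=aba^{-1}b^{-1}$) shows $\T^2\cup_a D^2\simeq S^1\vee S^2$, with $S^1$-factor the residual loop $b$. Attaching $\tau_2$ then amounts to gluing a $2$-disk along $b$, filling in this $S^1$: $K=K_1\cup\tau_2\simeq(S^1\vee S^2)\cup_{S^1}D^2\simeq D^2\vee S^2\simeq S^2$. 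The main obstacle is the verification in the second paragraph that $[\partial\tau_i]$ is a generator of $\pi_1(\T^2)$ rather than trivial or non-primitive. A cleaner but less geometric alternative is a homological argument: Mayer--Vietoris for the decomposition $K_0\cup(\tau_1\cup\tau_2)$, whose intersection $\partial\tau_1\cup\partial\tau_2\simeq S^1\vee S^1$ includes into $K_0\simeq\T^2$ as the two $H_1$-generators, gives $H_2(K)=\Z$ and $H_1(K)=0$; a van Kampen computation yields $\pi_1(K)=1$; Hurewicz and Whitehead then force $K\simeq S^2$.
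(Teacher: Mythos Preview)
Your argument is correct and rests on the same geometric picture as the paper's: $K$ is the torus $K_0\simeq\T^2$ with two $2$-cells attached along the meridian and longitude, hence $S^2$. The packaging differs. The paper builds an explicit model space $X=\T^2\cup c_1\cup c_2$ (torus with two disks glued along the coordinate circles), shows $X\simeq S^2$ by collapsing the contractible $X_2=c_1\cup c_2$, and then transfers this to $K$ via a commutative square of homotopy equivalences $X_1\to K_1$, $X_2\to K_2$ restricting to a homeomorphism on $A=X_1\cap X_2\cong S^1\vee S^1\cong L=K_1\cap K_2$, invoking Brown's gluing theorem~[7.5.7]. You instead attach $\tau_1$ and $\tau_2$ one at a time and compute $\T^2\cup_a D^2\simeq S^1\vee S^2$, then $(S^1\vee S^2)\cup_b D^2\simeq S^2$.

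Both arguments hinge on the same point you flag as the ``main obstacle'': that $\partial\tau_i$ represents a primitive generator of $\pi_1(K_0)$. The paper handles this by asserting that the nerve homotopy equivalence $g_1\colon X_1\to K_1$ can be chosen to restrict to a homeomorphism $A\to L$, which is no more (and no less) rigorous than your appeal to the nerve equivalence tracking the boundary loop around the horizontal circle. Your sequential-attachment route is more elementary (no need for Brown's gluing theorem) but requires the intermediate identification $\T^2\cup_a D^2\simeq S^1\vee S^2$; the paper's model-space comparison is cleaner in that it does the whole identification in one step. Your alternative Mayer--Vietoris/van Kampen/Whitehead route is also sound, though as you implicitly note it still requires the same $H_1$-generator identification.
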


\begin{proof}
Let $X$ be the cell complex obtained from $\R^2/G_{3k}$ by gluing two disks $c_1$ and $c_2$ along the circles $(\R\times \{0\})/G^1_{3k}$ and and $(\{0\}\times \R)/G^2_{3k}$,  where $G^1_{3k} = 3k\Z\times\{0\}$ and $G^2_{3k}= \{0\}\times 3k\Z$.
For convenience, let $X_1$ be the torus $\R^2/G_{3k}$ and let $X_2$ be the image of $c_1, c_2$ under the gluing map; hence, $X=X_1\cup X_2$.
Since $X_2$ is contractible, $X$ is homotopy equivalent to its quotient over $X_2$ which is a cell complex generated by one vertex and one disk; and hence it is homotopy equivalent to $S^2$.

Let $K_1$ be the complex whose collection of facets is $M_{3k,k}$ and let $K_2$ be the complex whose collection of facets is $\{\tau_1, \tau_2\}$.
Clearly the geometric realization of $K_2$ is homeomorphic to $X_2$.
By Theorem~\ref{thm:torus_homotopy}, $K_1$ is homotopy equivalent to $X_1$ because both are homotopy equivalent to $\T^2$.
Denote $L= K_1\cap K_2$ and $A = X_1\cap X_2$.
Both $L$ and $A$ are homotopy equivalent to $S^1\vee S^1$.
Fix a homotopy equivalence $g_1$ from $X_1$ to $K_1$ and a homeomorphism $g_2$ from $X_2$ to $K_2$ such that the restrictions of these two maps are homeomorphisms from $A$ to $L$.
Therefore, the below diagram (where all $i_1, \ldots, i_4$ are inclusion maps) commutes.

\begin{center}
\begin{tikzpicture}[scale=1.2, every node/.style={scale=0.9}]
    \node (X) at (-3, 2)  {\( X_1 \)};
    \node (A) at (0, 2)  {\( A \)};
    \node (Y) at (3, 2) {\( X_2 \)};
    
    \draw[->, thick] (A) -- (X) node[midway, above] {\(i_1\)};
    \draw[<-, thick] (Y) -- (A) node[midway, above] {\(i_2\)};
    
    \node (HX) at (-3, 0.5)  {\( K_1 \)};
    \node (HA) at (0, 0.5) {\(L \)};
    \node (HY) at (3, 0.5) {\( K_2 \)};
    
    
    \draw[<-, thick] (HX) -- (HA) node[midway, below] {\(i_3\)};
    \draw[<-, thick] (HY) -- (HA) node[midway, below] {\(i_4\)};

    \draw[->, thick] (X) -- (HX) node[midway, left] {\(g_1\)};
    \draw[->, thick] (A) -- (HA) node[midway, left] {\(h\)};
     \draw[->, thick] (Y) -- (HY) node[midway, left] {\(g_2\)};
    
\end{tikzpicture}
\end{center}
Because $X=X_1\cup X_2$ is homotopic to $S^2$, by~\cite[7.5.7]{Brown} so is $K=K_1\cup K_2$.
This finishes the proof.
\end{proof}

\begin{theorem}
\label{thm:homotopy_3k}
For $k\geq 2$, we have the homotopy equivalence
\[\vr{T_{3k,3k}}{k}\simeq \bigvee_{6k^2-1} S^2.\]
\end{theorem}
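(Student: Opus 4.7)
The plan is to use the facet classification and the torus subcomplex from Theorem~\ref{thm:torus_homotopy} to realize $\vr{T_{3k,3k}}{k}$ as a CW space built from the torus by attaching $6k^2$ two-cells, and then show that after attaching the first two (via Lemma~\ref{lemma:subcomplex_3k}) every remaining attaching map is null-homotopic, so each subsequent triangle contributes a wedge summand $S^2$.

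First I would set up the decomposition. By Lemma~\ref{lem:facets_3kor3k-1}, the facets of $\vr{T_{3k,3k}}{k}$ are $M_{3k,k}\cup N_{3k,k}$. Let $K_1$ be the subcomplex generated by $M_{3k,k}$, so $K_1\simeq \T^2$ by Theorem~\ref{thm:torus_homotopy}. Count the triangles in $N_{3k,k}$: each horizontal triple $\{([a],[b]),([a+k],[b]),([a+2k],[b])\}$ is unchanged under $a\mapsto a+k$, so there are $k$ distinct values of $a$ modulo $k$ and $3k$ values of $b$, giving $3k^2$ horizontal triangles; symmetrically there are $3k^2$ vertical ones, for $|N_{3k,k}|=6k^2$ triangles total. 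Each such triangle has all three edges of length $k$, so its boundary $1$-cycle already sits inside $K_1$; attaching the triangle amounts to attaching a single $2$-cell along that boundary loop.

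Next I would set up the induction. Pick a distinguished horizontal triangle $\tau_1$ and a distinguished vertical triangle $\tau_2$ as in Lemma~\ref{lemma:subcomplex_3k}, which already tells us that $K_1\cup\tau_1\cup\tau_2\simeq S^2$; in particular this space is simply connected. Enumerate the remaining triangles as $\tau_3,\tau_4,\ldots,\tau_{6k^2}$. For each $i\ge 3$, the boundary $\partial \tau_i$ is a $3$-edge loop wrapping once around either the longitudinal or the meridional circle of the torus $K_1$. Because $K_1\cup\tau_1\cup\tau_2$ is simply connected and attaching further $2$-cells can only kill elements of $\pi_1$, the intermediate complex $L_{i-1}:=K_1\cup\tau_1\cup\cdots\cup\tau_{i-1}$ is simply connected, so the attaching map of $\tau_i$ is null-homotopic in $L_{i-1}$. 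Invoking the standard fact that attaching an $n$-cell along a null-homotopic map $S^{n-1}\to X$ yields a space homotopy equivalent to $X\vee S^n$, I would conclude $L_i\simeq L_{i-1}\vee S^2$ for every $i\ge 3$.

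Combining the base case $L_2\simeq S^2$ with the $6k^2-2$ subsequent wedge-with-$S^2$ steps gives
\[
\vr{T_{3k,3k}}{k} \;=\; L_{6k^2} \;\simeq\; S^2 \vee \underbrace{S^2\vee\cdots\vee S^2}_{6k^2-2} \;=\; \bigvee_{6k^2-1} S^2,
\]
which is the desired conclusion. The one place that requires care is verifying that the successive null-homotopy claim really holds in each $L_{i-1}$ rather than merely in $K_1$ or in the final complex; this is where the fact that $\pi_1(L_2)=0$ (from Lemma~\ref{lemma:subcomplex_3k}) is essential, since attaching further $2$-cells preserves simple connectivity. Besides that, the only bookkeeping obstacle is confirming the count $|N_{3k,k}|=6k^2$ and that the edges of each $\tau_i$ already lie in $K_1$, both of which are immediate from the definitions.
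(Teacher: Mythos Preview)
Your proof is correct and follows essentially the same approach as the paper: the same facet decomposition, the same base case via Lemma~\ref{lemma:subcomplex_3k}, and the same induction adding one triangle at a time. The only cosmetic difference is that where you invoke the elementary fact that attaching a $2$-cell along a null-homotopic map yields a wedge with $S^2$ (using that each $L_{i-1}$ is simply connected), the paper packages the induction step via Lemma~\ref{cup_simp}, but the underlying reason---null-homotopy of the boundary circle in the simply connected intermediate complex---is identical.
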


\begin{proof}
There are $6k^2$ distinct $2$-simplices in $N_{3k,k}$ and the intersection of each pair in $N_{3k,k}$ is at most a vertex.
Let $K$ be the complex whose facets are
\[M(K) = M_{3k,k}\cup \{\{([0], [0]), ([k], [0]), ([2k], [0]) \}, \{([0], [0]), ([0], [k]), ([0], [2k])\}\}.\]
Then by Lemma~\ref{lemma:subcomplex_3k}, $K$ is homotopy equivalent to a sphere.
List the rest of the elements in $N_{3k,k}$ as $\sigma_1, \sigma_2, \ldots, \sigma_{6k^2-2}$.

Let $K_\ell$ be the complex formed by including the additional simplices $\sigma_1, \ldots, \sigma_\ell$.
Then we claim that the complex $K_\ell$ is homotopy equivalent to a $(\ell+1)$-fold wedge sum of $S^2$'s.
This implies that $\vr{T_{3k,3k}}{k}$ is homotopy equivalent to $\bigvee_{6k^2-1} S^2$.

The claim clearly holds for $\ell=0$.
Now we assume it holds for some $\ell<6k^2-2$.
Let $K_{\sigma_{\ell+1}}$ be the complex generated by the simplex $\sigma_{\ell+1}$.
Hence, $K_{\ell+1} =K_\ell\cup K_{\sigma_{\ell+1}}$.
Note that $K_{\sigma_{\ell+1}}$ is a complex formed by a $2$-simplex whose boundary is in $K_\ell$; hence, $K_\ell\cap K_{\sigma_{\ell+1}}$ is homotopy equivalent to $S^1$, and hence null-homotopic in both $K_\ell$ and $K_{\sigma_{\ell+1}}$.
Then by Lemma~\ref{cup_simp}, $K_{\ell+1} =K_\ell\cup K_{\sigma_{\ell+1}}$ is homotopy equivalent to $K_\ell\vee \Sigma S^1 = K_\ell\vee S^2$ since $K_{\sigma_{\ell+1}}$ is contractible.
This induction finishes the proof of the claim.
\end{proof}

This gives the homotopy types of the \colorbox{purple!60}{red} diagonal in Table~\ref{table:Torus-Grids}.
We now prove the homotopy types of the \colorbox{teal!50}{teal} diagonal.

In Theorem~\ref{thm:homotopy_3k-1} below we prove that for any $k\geq 3$, we have the homotopy equivalence $\vr{T_{3k-1,3k-1}}{k}\simeq \bigvee_{6k-3} S^2\vee \bigvee_{6k-2}S^3$.
Recall from Lemma~\ref{lem:facets_3kor3k-1} that the facets in $\vr{T_{3k-1,3k-1}}{k}$ are $M_{3k-1,k}\cup N_{3k-1,k}$.
By Theorem~\ref{thm:torus_homotopy}, the subcomplex generated by $M_{3k-1,k}$ is homotopy equivalent to a torus.
To this torus, we glue on the remaining tetrahedra in $N_{3k-1,k}$, which as we show, have the effect of gluing on $3k-1$ $3$-spheres attached along longitudinal circles, and $3k-1$ $3$-spheres attached along meridional circles.
The resulting homotopy type, as we show, will be $\bigvee_{6k-3} S^2\vee \bigvee_{6k-2}S^3$.

We omit the proof of the following lemma, which is analogous to Lemma~\ref{lemma:subcomplex_3k}.

\begin{lemma}
\label{lemma:subcomplex_3k-1}
Assume that $k\geq 3$.
Let $\tau_1=\{([0], [0]), ([k], [0]), ([2k-1], [0]), ([2k], [0]) \}$ and $\tau_2=\{([0], [0]), ([0], [k]), ([0], [2k-1]), ([0], [2k])\}$ and let $K$ be the simplicial complex with $M(K) = M_{3k-1,k}\cup \{\tau_1, \tau_2\}$.
Then $K$ is homotopy equivalent to the sphere $S^2$.
\end{lemma}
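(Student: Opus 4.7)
The plan is to mirror the proof of Lemma~\ref{lemma:subcomplex_3k}. Let $X_1 = \R^2/G_{3k-1}$, and let $X_2$ be formed by attaching two $3$-balls $c_1, c_2$ to $X_1$ along equatorial circles in their boundaries, identified respectively with the horizontal circle $(\R\times\{0\})/G^1_{3k-1}$ and the vertical circle $(\{0\}\times\R)/G^2_{3k-1}$, where $G^1_{3k-1}=(3k-1)\Z\times\{0\}$ and $G^2_{3k-1}=\{0\}\times(3k-1)\Z$. Since $X_2$ is contractible and its attachment to $X_1$ kills both generating loops of $\pi_1(\T^2)$, the space $X = X_1 \cup X_2$ is homotopy equivalent to $\T^2$ with both generators collapsed, namely $S^2$.

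Let $K_1$ be the subcomplex of $K$ with $M(K_1) = M_{3k-1,k}$, and let $K_2 = \tau_1\cup\tau_2$. By Theorem~\ref{thm:torus_homotopy}, $K_1\simeq\T^2$, and $K_2$ is a wedge of two $3$-simplices at $([0],[0])$, hence contractible. The critical step is identifying the homotopy type of $K_1\cap K_2 = (K_1\cap\tau_1)\cup(K_1\cap\tau_2)$, where the two pieces meet only at $([0],[0])$. I claim each $K_1\cap\tau_j\simeq S^1$, realizing one generator of $\pi_1(K_1)=\Z^2$. For $\tau_2$, all six edges lie in $K_1$, but of the four triangular faces only $\{([0],[0]),([0],[2k-1]),([0],[2k])\}$ and $\{([0],[k]),([0],[2k-1]),([0],[2k])\}$ admit lifts to $\vr{\Z^2}{k}$ of diameter at most $k$; the other two have minimal diameter $2k-1 > k$. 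The resulting subcomplex is a topological disk (two filled triangles glued along the shared edge $\{([0],[2k-1]),([0],[2k])\}$) together with one extra edge $\{([0],[0]),([0],[k])\}$ joining opposite boundary vertices, hence homotopy equivalent to $S^1$. Tracing the three edges of this loop through $C_{3k-1}$ shows that they collectively wrap once around the second coordinate circle of $\T^2$. The analogous analysis for $\tau_1$ yields a loop wrapping once around the first coordinate circle.

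With these identifications, one concludes by choosing compatible homotopy equivalences $g_1\colon X_1\to K_1$ and $g_2\colon X_2\to K_2$ whose restrictions to $X_1\cap X_2\simeq S^1\vee S^1$ and $K_1\cap K_2\simeq S^1\vee S^1$ agree up to homotopy, and then invoking~\cite[7.5.7]{Brown} to deduce $K\simeq X\simeq S^2$. The main obstacle is the identification of each $K_1\cap\tau_j$ with a generator-realizing circle; once this is verified, the gluing argument is formally identical to that of Lemma~\ref{lemma:subcomplex_3k}. A secondary subtlety absent from the earlier lemma is that each $\tau_j$ is a $3$-simplex, so part of $\partial\tau_j$ already lies in $K_1$ while part does not; careful bookkeeping of which triangular faces persist, and which lifts realize them, is where the computational work sits.
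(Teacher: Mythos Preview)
Your proof is correct and follows exactly the route the paper indicates (the paper omits the proof, pointing to Lemma~\ref{lemma:subcomplex_3k} as the template). The new work you had to supply---identifying which faces of each tetrahedron $\tau_j$ survive in $K_1$, so that $K_1\cap\tau_j$ is two triangles sharing an edge plus one chord, hence $\simeq S^1$ representing a generator of $\pi_1(K_1)$---is carried out correctly and is precisely the computation that distinguishes this case from the $3k$ case.

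One technical point deserves tightening. In Lemma~\ref{lemma:subcomplex_3k} the map $g_2\colon X_2\to K_2$ is a homeomorphism whose restriction is a homeomorphism $A\to L$; this works because there $A=\partial\tau_j$ and $L=\partial\tau_j$ coincide. In your setting $L=K_1\cap K_2$ is \emph{not} the full boundary of the tetrahedra, so a homeomorphism $D^3\vee D^3\to \tau_1\cup\tau_2$ will not send your equatorial circles into $L$ automatically, and ``agree up to homotopy'' is weaker than what Brown~7.5.7 asks for. The fix is easy: either (i) choose $h\colon A\to L$ first (sending each circle onto the triangle $0\to k\to 2k\to 0$, say) and then extend to $g_1,g_2$ so the squares commute strictly---possible because $K_2$ is contractible and $K_1\simeq\T^2$ with the right $\pi_1$ identification; or (ii) bypass the model space entirely by noting that $K$ is the homotopy pushout of $K_1\hookleftarrow L\hookrightarrow K_2$, and since $K_2\simeq *$ this is the mapping cone of the inclusion $S^1\vee S^1\hookrightarrow\T^2$ of the two generators, i.e.\ $\T^2/(S^1\vee S^1)\simeq S^2$. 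Your $3$-ball model is not wrong, but it creates exactly this bookkeeping issue without buying anything over the $2$-disk model of Lemma~\ref{lemma:subcomplex_3k}.
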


In Theorem~\ref{thm:homotopy_3k-1}, we use a result of Adamaszek~\cite[Corollary 6.7]{Adamaszek2013} to determine the homotopy types of the cross-sectional full sub-complexes in $\vr{T_{3k-1,3k-1}}{k}$.
This result characterizes the homotopy type of the Vietoris-Rips complex of a cycle:
\begin{align*}
\vr{C_n}{k} \cong 
\begin{cases}
\bigvee^{n-2k-1}S^{2\ell} & \text{if } k =\frac{\ell}{2\ell+1}n \\
S^{2\ell+1} &\text{if } \frac{\ell}{2\ell+1}n<k<\frac{\ell+1}{2\ell+3}n.
\end{cases}
\end{align*}
In particular, when $k\geq 3$, $\vr{C_{3k-1}}{k}$ is homotopy equivalent to $S^3$.

\begin{theorem}
\label{thm:homotopy_3k-1}
For $k\geq 3$, we have the homotopy equivalence
\[\vr{T_{3k-1,3k-1}}{k}\simeq \bigvee_{6k-3} S^2\vee \bigvee_{6k-2}S^3.\] 

Also, \[\vr{T_{5,5}}{2} \simeq \bigvee_{9}S^2.\] 
\label{thm:S2S3}
\end{theorem}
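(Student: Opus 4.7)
The plan is to mirror the proof of Theorem~\ref{thm:homotopy_3k}. By Lemma~\ref{lem:facets_3kor3k-1}, the facets of $\vr{T_{3k-1,3k-1}}{k}$ are $M_{3k-1,k}\cup N_{3k-1,k}$, and by Lemma~\ref{lemma:subcomplex_3k-1}, the complex $K_0$ with facets $M_{3k-1,k}\cup\{\tau_1,\tau_2\}$ is homotopy equivalent to $S^2$. I would build $\vr{T_{3k-1,3k-1}}{k}$ from $K_0$ by attaching the remaining $2(3k-1)^2-2$ tetrahedra of $N_{3k-1,k}$ one at a time, in an order organized row-by-row and then column-by-column. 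Writing $K_\ell$ for the complex after $\ell$ attachments and using Lemma~\ref{cup_simp} (together with the fact that each tetrahedron-generated complex $K_{\sigma_{\ell+1}}$ is contractible and the growing complex remains simply-connected after $K_0$), each step gives $K_{\ell+1}\simeq K_\ell\vee\Sigma(K_\ell\cap K_{\sigma_{\ell+1}})$.

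The heart of the argument is the combinatorial analysis of the intersection $K_\ell\cap K_{\sigma_{\ell+1}}$ at each step. For a horizontal tetrahedron $\sigma=\{([a],[b]),([a+k],[b]),([a+2k-1],[b]),([a+2k],[b])\}$, its intersection with $L$ contains all $4$ vertices, all $6$ edges, and exactly $2$ of its $4$ triangle faces (those fitting in a $k$-arc of $C_{3k-1}$); this intersection deformation retracts onto $S^1$. The two ``non-arc'' triangles are each shared with exactly one other horizontal tetrahedron in the same row, namely the ones at positions $a+k$ and $a+2k-1$ modulo $3k-1$, respectively. Enumerating the row-$b$ tetrahedra cyclically and tracking which of these shared triangles are already present, one finds that each intersection $K_\ell\cap K_{\sigma_{\ell+1}}$ takes one of three forms: homotopy equivalent to $S^1$ (contributing an $S^2$ via $\Sigma S^1$), contractible (contributing nothing), or the full boundary $\partial\sigma_{\ell+1}\cong S^2$ (contributing an $S^3$ via $\Sigma S^2$). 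Accounting for these contributions across all rows and all columns, and making the appropriate adjustments for the row and column of $\tau_1$ and $\tau_2$, one arrives at exactly $6k-3$ copies of $S^2$ and $6k-2$ copies of $S^3$.

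For the special case $k=2$, $n=5$, note that $\vr{C_5}{2}$ is the full $4$-simplex on $5$ vertices (hence contractible), since $\diam(C_5)=2$. The row- and column-subcomplexes are therefore contractible rather than homotopy equivalent to $S^3$, so no $S^3$ contributions arise; a direct count using the same framework yields $\vr{T_{5,5}}{2}\simeq\bigvee_9 S^2$. The principal obstacle in the general case is the precise bookkeeping of the shared triangular faces as the induction proceeds, together with the verification of the null-homotopy hypotheses of Lemma~\ref{cup_simp}: in particular, one must confirm that the attaching class $[\partial\sigma_{\ell+1}]$ vanishes in $H_2(K_\ell)$ whenever we attach a tetrahedron whose boundary $2$-sphere lies entirely in the already-built $2$-skeleton, which (by Hurewicz applied to the simply-connected $K_\ell$) guarantees the required null-homotopy.
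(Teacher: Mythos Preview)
Your tetrahedron-by-tetrahedron strategy is different from the paper's, and it contains a genuine gap at exactly the point you flagged as ``the principal obstacle.''

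The paper does \emph{not} attach the elements of $N_{3k-1,k}$ one at a time. Instead it groups them: for each fixed first coordinate $[a]$ (resp.\ second coordinate $[b]$) it attaches the entire full subcomplex $L_a^1$ (resp.\ $L_b^2$), which is isomorphic to $\vr{C_{3k-1}}{k}$ and hence, by Adamaszek's theorem, homotopy equivalent to $S^3$. The intersection of $L_a^1$ with the already-built complex is then analyzed once (it is $\simeq S^1$ for $a\neq 0$ and contractible for $a=0$), and a single application of Lemma~\ref{cup_simp} yields a contribution of $S^2\vee S^3$ (or just $S^3$) per line. This is what produces the counts $6k-3$ and $6k-2$.

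In your approach, consider a single row $b\neq 0$ with its $3k-1$ tetrahedra added in cyclic order. Your own bookkeeping shows that the first $k$ tetrahedra have intersection $\simeq S^1$ (contributing $S^2$'s), the next $k-1$ have contractible intersection, and the last $k$ have intersection equal to the full boundary $\partial\tau\simeq S^2$. If Lemma~\ref{cup_simp} applied at every one of those last $k$ steps you would get $k$ copies of $S^3$ from this row alone, and hence at least $2k(3k-2)$ copies of $S^3$ in total---far more than $6k-2$. What actually happens is that for $k-1$ of those $k$ tetrahedra the boundary $2$-sphere represents a \emph{nonzero} class in $H_2(K_\ell)$; attaching the $3$-cell then kills an $S^2$ summand rather than creating an $S^3$. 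This is forced by the fact that the full row subcomplex is $\vr{C_{3k-1}}{k}\simeq S^3$, which has $H_2=0$.

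Your last sentence is where the argument breaks: Hurewicz for the simply-connected $K_\ell$ tells you that $[\partial\sigma_{\ell+1}]\in\pi_2(K_\ell)$ vanishes \emph{if and only if} its image in $H_2(K_\ell)$ vanishes, but it does not show that this image is zero---and in most cases it is not. So the hypothesis of Lemma~\ref{cup_simp} fails at precisely those steps, and the inductive formula $K_{\ell+1}\simeq K_\ell\vee\Sigma(K_\ell\cap K_{\sigma_{\ell+1}})$ is invalid there. The remedy is the paper's: attach the whole line $L_a^i\simeq S^3$ at once, so that Lemma~\ref{cup_simp} is invoked with an intersection that is only $S^1$ (manifestly null-homotopic in both pieces), and Adamaszek's identification of $\vr{C_{3k-1}}{k}$ supplies the $S^3$.
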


\begin{proof}
For each $a\in \Z$, we let $L_a^1$ be the full subcomplex of $\vr{T_{3k-1,3k-1}}{k}$ with the vertices whose first entry is $[a]$, and we let $L_b^2$ be full subcomplex of $\vr{T_{3k-1,3k-1}}{k}$ with all vertices whose second entry is $[b]$.
Notice that both $L_a^1$ and $L_b^2$ are isomorphic to the Vietoris--Rips complex on the cyclic graph $C_{3k-1}$ with scale $k$.

First fix $k\geq 3$.
Then, by Adamaszek's result, both of $L_a^1$ and $L_b^2$  are homotopy equivalent to $S^3$.
Notice that the pairwise intersection of these subcomplexes contains at most one vertex.
We let $K$ be the simplicial complex whose collection of facets is $M_{3k-1,k}\cup \{\sigma_1, \sigma_2\}$, where $\sigma_1=\{([0], [0]), ([0], [k]), ([0], [2k-1]), ([0], [2k])\}$ and $\sigma_2=\{([0], [0]), ([k], [0]), ([2k-1], 0), ([2k], [0]) \}$.
By Lemma~\ref{lemma:subcomplex_3k-1}, $K$ is homotopy equivalent to $S^2$.

Notice that
\[\vr{T_{3k-1,3k-1}}{k}= K\cup \bigcup_{a\in \Z} L_a^1\cup \bigcup_{b\in \Z} L_b^2.\]

If $a\neq 0$, then $K\cap L_a^1$ is the subcomplex whose facets are $\{\{([a], [b]), ([a], [b+2k-1]), ([a], [b+2k])\}$, $\{([a], [b+k]), ([a], [b+2k-1]), ([a], [b+2k])\}$, $\{([a], [b]), ([a], [b+k])\}\}$, which is homotopic to the circle $S^1$.
Therefore, $K\cup L_a^1$ is homotopy equivalent to $K\vee L_a^1\vee\Sigma(S^1) \simeq S^2\vee S^3\vee S^2$ by Lemma~\ref{cup_simp}.
For $a=0$, $K\cap L_a^1$ is contractible and hence $K\cup L_a^1$ is homotopy equivalent to $S^2\vee S^3$.
The same results hold for $K\cup L_b^2$.
Then an inductive argument yields the first result in the theorem.

\medskip

Now we consider the complex $\vr{T_{5,5}}{2}$.
The Vietoris-Rips complex of the cyclic graph $C_5$ with scale $2$ is generated by a $5$-simplex and hence contractible.
Therefore, when $k=2$, both $L_a^1$ and $L_b^2$ are contractible.
Let $L$ be the complex with vertex set $T_{5, 5}$ and facets $M_{5, 2}$.
Similar as Lemma~\ref{lem:facets_3kor3k-1}, a facet in $\vr{T_{5,5}}{2}$ is either in $M_{5, 2}$ or $L_a^1$, $L_b^2$ for some $a$, or  $b$.
A similar approach as Lemma~\ref{lemma:subcomplex_3k} shows that the complex $L\cup L_0^1\cup L_0^2$ is homotopy equivalent to $S^2$.
And the intersection of each of the $8$ remaining $L_a^1$ or $L_b^2$ with $L\cup L_0^1\cup L_0^2$ is homotopy equivalent to $S^1$.
Hence by induction and Lemma~\ref{cup_simp} the complex $\vr{T_{5,5}}{2}$ is homotopy equivalent to $\bigvee_{9}S^2$.
\end{proof}

\section{Connection between torus grid graphs and cross-polytopes}
\label{sec:cross-polytopes}

In this section we explore the connection between the torus grid graphs and cross-polytopes as noted in Proposition~\ref{Proposition:croo-antipode}.
We state first some notation and results.

An \emph{$n$-dimensional cross-polytope} is a ball in $\mathbb R^n$ with the $\ell_1$ norm, that is, it is a set of the form $\{ x\in \mathbb R^n : \| x - c\|_1 \leq \varepsilon \}$, for some $\varepsilon >0$ and some $c\in \mathbb R^n$.
All $n$-dimensional cross-polytopes are isomorphic equivalent, and hence we only focus on the case where $\varepsilon =1$ and $c = \textbf{0}$ (the origin of $\mathbb R^n$); we refer to this case as ``the'' $n$-dimensional cross-polytope.
Equivalently, if $\{ \hat e_k$ : $1\leq k \leq n\}$ is the standard basis in $\mathbb R^n$, then the $n$-dimensional cross-polytope is the convex hull of the vertex set $\{ \pm \ \hat e_k : 1\leq k \leq n \}$.
The \emph{boundary} of the $n$-dimensional cross-polytope is the set $\{ x\in \mathbb R^n : \|x||_1 = 1 \}$, which is homeomorphic to the unit sphere $S^{n-1}$ in $\mathbb R^n$.

Recall that the diameter of the torus grid graph $T_{n,n}$ is $n$ if $n$ is even, and $n-1$ if $n$ is odd.
For scale $r=\diam(T_{n,n})-1$, the Vietoris--Rips complex $\vr{T_{n,n}}{r}$ will have all possible edges \emph{except} that each vertex will not be connected to its ``antipodes'' (the vertices at distance $r$):

\begin{lemma}
\label{Lemma:VR-antipode}
Let $n$ be a positive integer and $r=\diam(T_{n,n})-1$.
Then in the 1-skeleton of $\vr{T_{n,n}}{r}$, each vertex is connected to all but
\begin{itemize}
    \item exactly one vertex, if $n$ is even,
    \item exactly four vertices, if $n$ is odd.
\end{itemize}
\end{lemma}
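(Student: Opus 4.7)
The plan is to reduce the adjacency question in $\vr{T_{n,n}}{r}$ to a counting question about maximally distant pairs in the cycle factor $C_n$. First I would observe that in the $1$-skeleton of $\vr{T_{n,n}}{r}$, two vertices $v$ and $w$ fail to be connected by an edge exactly when $d(v,w) > r = \diam(T_{n,n}) - 1$; since no pair of vertices in $T_{n,n}$ can exceed diameter, the non-neighbors of $v$ are precisely the vertices at distance exactly $\diam(T_{n,n})$ from $v$. The lemma is therefore equivalent to computing, for each $v \in T_{n,n}$, the number of vertices at maximal distance from $v$.

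Next I would invoke the $l^1$ product structure of the torus grid metric recalled in Section~\ref{ssec:metric-spaces}, namely
\[d\bigl((i,j),(i',j')\bigr) = d_{C_n}(i,i') + d_{C_n}(j,j').\]
Since $\diam(T_{n,n}) = 2\,\diam(C_n)$ (which equals $n$ when $n$ is even and $n-1$ when $n$ is odd), the sum on the right-hand side equals $\diam(T_{n,n})$ if and only if both coordinate distances simultaneously attain the maximum value $\diam(C_n)$. So the number of non-neighbors of $v=(i,j)$ factors as the product of the number of $i' \in C_n$ with $d_{C_n}(i,i')=\diam(C_n)$ and the corresponding number in the second coordinate.

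Finally I would carry out the straightforward cycle-antipode count: if $n$ is even, then for each $i\in C_n$ there is a unique vertex at distance $n/2$ from $i$, namely $i + \tfrac{n}{2}\pmod n$; if $n$ is odd, then there are exactly two vertices at distance $\tfrac{n-1}{2}$ from $i$, namely $i + \tfrac{n-1}{2}$ and $i + \tfrac{n+1}{2}\pmod n$. Multiplying coordinate counts gives $1\cdot 1 = 1$ non-neighbor in the even case and $2\cdot 2 = 4$ non-neighbors in the odd case, which is exactly the assertion of the lemma.

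No serious obstacle is expected; the argument is essentially the observation that the $l^1$-diameter decomposes coordinatewise. The only minor care needed is to verify the antipode structure of $C_n$ in both parities, and to note that distances in $T_{n,n}$ cannot exceed $\diam(T_{n,n})$ so that ``$d(v,w) > r$'' forces equality with the diameter rather than a strictly larger value.
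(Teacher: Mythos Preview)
Your proposal is correct and follows essentially the same approach as the paper: both proofs reduce to counting antipodes in the cycle $C_n$ (one if $n$ is even, two if $n$ is odd) and then use the $l^1$ product structure of $T_{n,n}$ to conclude that the number of diameter-realizing vertices is the product of the coordinate counts. Your write-up is in fact more explicit than the paper's, spelling out why non-neighbors are exactly the diameter-distance vertices and why the sum $d_{C_n}(i,i')+d_{C_n}(j,j')$ attains $\diam(T_{n,n})=2\,\diam(C_n)$ only when both summands are maximal.
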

\begin{proof}
First note that in the cycle graph $C_n$, given a vertex $u$, there is exactly one antipode of $u$ in $C_n$, if $n$ is even, and exactly two antipodes of $u$ in $C_n$ is $n$ is odd.
This implies that given any vertex $(u,u')$ in $T_{n,n}$ there is exactly one (four) other vertex (vertices) at distance $\diam(T_{n,n})$ from $(u,u')$ if $n$ is even (odd).
\end{proof}

The previous lemma motivates the following definition.
We say that a graph is \emph{antipode} if every vertex is connected to all but exactly one vertex.
For example, the 1-skeleton of $\vr{T_{n,n}}{\diam(T_{n,n})-1}$ is an antipode graph when $n$ is even.
Note that from the definition, the size of the vertex set of an antipode graph must be even.

Antipode graphs are a special case of the \emph{Tur\'an} graphs $T(m, l)$ (see~\cite{turan1954theory}): a vertex set of size $m$ is partitioned into $l$ many subsets, with sizes as equal as possible, and then connecting two vertices by an edge if and only if they belong to different subsets.
Hence, an antipode graph is isomorphic equivalent to $T(2m, m)$ for some $m$.

We recall that the \emph{clique complex} $\cl(G)$ of a graph $G$ is the simplicial complex with vertex set $V(G)$, and with a finite subset $\sigma$ as a simplex $\sigma$ if and only if $\sigma$ induces a complete subgraph of $G$.

\begin{proposition}
\label{Proposition:croo-antipode}
The 1-skeleton of an $n$-dimensional cross-polytope is an antipode graph, in fact of type $T(2n,n)$.
Conversely, the clique complex of an antipode graph of $2n$ vertices is toplogically homeomorphic to the boundary of the $n$-dimensional cross-polytope.
\end{proposition}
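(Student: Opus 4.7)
The plan is to prove both directions by carefully identifying the combinatorial structure on both sides and then upgrading a simplicial isomorphism to a homeomorphism.

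For the forward direction, I would begin with the explicit vertex set $\{\pm \hat{e}_k : 1\le k\le n\}$ of the $n$-dimensional cross-polytope. The key observation is that two distinct vertices $v,w$ span a $1$-face (edge) of the cross-polytope if and only if $\{v,w\}\ne\{\hat{e}_k,-\hat{e}_k\}$ for any $k$: indeed, if $v=\hat{e}_i$ and $w=\pm\hat{e}_j$ with $\{i,j\}$ distinct or the signs not opposite, the midpoint lies on the boundary sphere $\|x\|_1=1$ and hence the segment is a face, whereas the segment from $\hat{e}_k$ to $-\hat{e}_k$ passes through the origin and lies in the interior. Thus each of the $2n$ vertices is connected to all but one vertex (its antipode), so the $1$-skeleton is an antipode graph. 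The natural partition $\bigl\{\{\hat{e}_k,-\hat{e}_k\}\bigr\}_{k=1}^n$ into $n$ pairs exhibits this graph as the Tur\'an graph $T(2n,n)$.

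For the converse direction, let $G$ be an antipode graph on $2n$ vertices. Since each vertex has a unique non-neighbor, the complement of $G$ is a perfect matching, giving a partition of $V(G)$ into $n$ antipodal pairs $\{v_k,v_k'\}$. A finite subset $\sigma\subseteq V(G)$ induces a complete subgraph precisely when it contains at most one vertex from each pair, so the facets of $\mathrm{Cl}(G)$ are exactly the $2^n$ transversals of the matching, each of size $n$. I would then define the simplicial map $\varphi\dv\mathrm{Cl}(G)\to\partial P_n$ on vertices by $\varphi(v_k)=\hat{e}_k$, $\varphi(v_k')=-\hat{e}_k$, where $P_n$ is the $n$-dimensional cross-polytope. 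The analysis of the forward direction shows that the facets of $\partial P_n$ (as a simplicial complex under its standard triangulation) are in bijection with transversals of the pairing $\{\hat{e}_k,-\hat{e}_k\}$, so $\varphi$ induces a simplicial isomorphism.

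The remaining step is to promote this simplicial isomorphism to a topological homeomorphism. This is routine: the boundary of an $n$-dimensional cross-polytope is already the geometric realization of the simplicial complex whose simplices are the subsets of $\{\pm\hat{e}_k\}$ not containing any antipodal pair, since each face of $\partial P_n$ is the convex hull of such a transversal and these convex hulls meet along lower-dimensional faces of the same type. Thus $|\mathrm{Cl}(G)|$ and $\partial P_n$ are geometric realizations of isomorphic abstract simplicial complexes, hence homeomorphic. The one point to verify carefully (and the only possible obstacle, though a minor one) is that no additional incidences beyond the transversal structure are introduced by the convex-geometric realization of $\partial P_n$, which follows immediately from the fact that the collection of transversals forms an abstract simplicial complex and that $\partial P_n$ is the join $S^0*\cdots*S^0$ of $n$ copies of $S^0$.
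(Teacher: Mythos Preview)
Your proposal is correct and follows essentially the same approach as the paper: both arguments characterize the simplices of the cross-polytope boundary and of the clique complex as precisely the vertex subsets avoiding antipodal pairs, set up the obvious vertex bijection $v_k\leftrightarrow\hat{e}_k$, $v_k'\leftrightarrow -\hat{e}_k$, and extend it linearly to a simplicial isomorphism and hence a homeomorphism. Your write-up is somewhat more detailed (the midpoint argument for edges, the join description $S^0*\cdots*S^0$), but the underlying strategy is identical.
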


\begin{proof}
For the first part, since the $n$-dimensional cross-polytope is the convex hull of the vertex set $\{ \pm \ \hat e_k \in \mathbb R^n~|~1\leq k \leq n \}$, this implies that in its 1-skeleton, each vertex $\hat e_k$ is connected to all vertices except $-\hat e_k$.

Now we prove the second part.
Note that the boundary of the cross-polytope is a simplicial complex that contains a subset $\sigma$ of $\{ \pm \ \hat e_k \in \mathbb R^n~|~1\leq k \leq n \}$ as a simplex if and only if $\{\hat e_k,-\hat e_k\}\nsubseteq \sigma$ for all $k$.
The same is true for the clique complex of an antipode graph with $2n$ vertices (equivalently, for the Tur\'an graph $T(2n,n)$).
Indeed, if we let $\{\pm v_1,\ldots,\pm v_n\}$ be the vertex set of an antipode graph with $2n$ vertices, where each vertex $v_k$ is connected to all other vertices besides $-v_k$, then a subset $\sigma$ of $\{\pm v_1,\ldots,\pm v_n\}$ is a simplex in the clique complex of the antipode graph if and only if $\{v_k,-v_k\}\nsubseteq \sigma$ for all $k$.
So, we can obtain a homeomorphism from the boundary of the $n$-dimesnional cross-polytope to the clique complex of an antipode graph with $2n$ vertices by mapping each vertex $\hat e_k$ to $v_k$, and then extending linearly to simplices.
\end{proof}

\begin{corollary}
\label{cor:cross-polytopal}
Let $n$ be even.
Then $\vr{T_{n,n}}{\diam(T_{n,n})-1} \simeq S^{\frac{n^2}{2} -1}$.
\end{corollary}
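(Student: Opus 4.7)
The plan is to chain together the three preceding results in this section: Lemma~\ref{Lemma:VR-antipode}, Proposition~\ref{Proposition:croo-antipode}, and the definition of the Vietoris--Rips complex as a clique complex. The statement for even $n$ is really just an assembly of the machinery that has already been developed.

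First I would note that since $n$ is even, $\diam(T_{n,n})=n$, so the scale in question is $r=n-1$. Since the Vietoris--Rips complex is a clique complex, $\vr{T_{n,n}}{n-1}$ is isomorphic to the clique complex of its $1$-skeleton $G$. By Lemma~\ref{Lemma:VR-antipode}, in $G$ every vertex is adjacent to all but exactly one other vertex (its unique antipode), so $G$ is an antipode graph on $n^2$ vertices. Since $n$ is even, $n^2$ is even, matching the parity requirement for an antipode graph.

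Next I would apply Proposition~\ref{Proposition:croo-antipode}: the clique complex of an antipode graph on $2m$ vertices is homeomorphic to the boundary of the $m$-dimensional cross-polytope. Taking $m=\tfrac{n^2}{2}$, we conclude that $\vr{T_{n,n}}{n-1}$ is homeomorphic to the boundary of the $\tfrac{n^2}{2}$-dimensional cross-polytope, which in turn is homeomorphic to $S^{\frac{n^2}{2}-1}$. This upgrades the conclusion from homotopy equivalence ($\simeq$) to a homeomorphism, matching the stronger assertion made in the introduction for this family.

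There is essentially no obstacle here; the only subtle point worth double-checking is that all simplices of $\vr{T_{n,n}}{n-1}$ are indeed determined by the $1$-skeleton (which is immediate from the clique-complex characterization of Vietoris--Rips complexes) and that Lemma~\ref{Lemma:VR-antipode} leaves each vertex with exactly one non-neighbor so that $G$ falls under the definition of an antipode graph. Once these two observations are in place, the corollary is just the concatenation of Lemma~\ref{Lemma:VR-antipode} and Proposition~\ref{Proposition:croo-antipode}.
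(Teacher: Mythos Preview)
Your proposal is correct and follows essentially the same approach as the paper: apply Lemma~\ref{Lemma:VR-antipode} to see that the $1$-skeleton is an antipode graph on $n^2$ vertices, note that the Vietoris--Rips complex is the clique complex of this $1$-skeleton, and then invoke Proposition~\ref{Proposition:croo-antipode} with $m=\tfrac{n^2}{2}$ to identify it with the boundary of the cross-polytope, hence with $S^{\frac{n^2}{2}-1}$.
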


\begin{proof}
By Lemma~\ref{Lemma:VR-antipode}, the 1-skeleton of $\vr{T_{n,n}}{\diam(T_{n,n})-1}$ is an antipode graph, indeed of type $T(n^2, \frac{n^2}{2})$.
Also note that $\vr{T_{n,n}}{\diam(T_{n,n})-1}$ coincides with the clique complex of its 1-skeleton.
Hence by Proposition~\ref{Proposition:croo-antipode}, $\vr{T_{n,n}}{\diam(T_{n,n})-1}$ is homeomorphic to the boundary of the $\frac{n^2}{2}$-dimensional cross-polytope, which is homeomorphic to  $S^{\frac{n^2}{2} -1}$.
\end{proof}

\section{The homotopy types of $\vr{T_{5,5}}{3}$ and $\vr{T_{7,7}}{4}$}
\label{sec:homotopy-through-homology}

In this section, we use homology computations (with integer coefficients) and the Hurewicz and Whitehead theorems to determine the homotopy types $\vr{T_{5,5}}{3}\simeq \bigvee_9 S^4$ and $\vr{T_{7,7}}{4}\simeq S^3$.

The homology groups for $\vr{T_{n,n}}{r}$ in Table~\ref{table:Torus-Grids} were computed with $\Z/2\Z$ coefficients, using Ripser software~\cite{bauer2021ripser}.
However, we also computed the homology groups for $\vr{T_{n,n}}{r}$ with $r\le 4$ with $\Z$ coefficients, using Polymake software~\cite{polymake:2000}.
All of the homology groups were free in this range.
So, while entry $i\dv d$ in Table~\ref{table:Torus-Grids} is shorthand for $H_i(\vr{T_{n,n}}{k};\Z/2) \cong (\Z/2)^d$, it is moreover true that if $r\le 4$, then entry $i\dv d$ in Table~\ref{table:Torus-Grids} means $H_i(\vr{T_{n,n}}{k};\Z) \cong \Z^d$ (and all other positive dimensions have trivial homology).
In particular, Polymake computations show that the integral homology of $\vr{T_{5,5}}{3}$ is the same as that of $\bigvee_9 S^4$, and the integral homology of $\vr{T_{7,7}}{4}$ is the same as that of $S^3$.
We now show that these are homotopy equivalences.

Let $Y$ be a simply connected CW complex such that the only nonzero reduced homology group is $\tilde{H}_n(Y)\cong\Z^a$.
It follows from the Hurewicz and Whitehead theorems that we have a homotopy equivalence $Y\simeq \bigvee_a S^n$.
This is given by Proposition~4C.1 of~\cite{Hatcher} and stated clearly in Theorem~1 of~\cite{carnero2024homotopy}; see Theorem~2 of~\cite{carnero2024homotopy} for an interesting generalization.
Since we have used Polymake to determine the integral homology of $\vr{T_{5,5}}{3}$ is the same as that of $\bigvee_9 S^4$, and the integral homology of $\vr{T_{7,7}}{4}$ is the same as that of $S^3$, it suffices to show that these Vietoris--Rips complexes are simply connected.

We prove simple connectedness using the appendix by Barmak in Farber's paper~\cite{farber2023large}, which consolidates prior work~\cite{meshulam2001clique,meshulam2003domination,chudnovsky2000systems,Kahle2009} lower bounding the connectivity of clique complexes of graphs.
Recall that a nonempty topological space $Y$ is \emph{$k$-connected} if the homotopy groups $\pi_i(Y)$ are trivial for all $i\le k$.
Barmak proves that if a simplicial complex is $(2k+2)$-conic (meaning every subcomplex with at most $(2k+2)$ vertices is contained in a simplicial cone), then the complex is $k$-connected~\cite[Appendix, Theorem~4]{farber2023large}.
If any $2k+2$ closed balls of radius $r$ in the metric space $X$ intersect, then each such intersection point is the apex of a simplicial cone in $\vr{X}{r}$ containing any subcomplex on those ball centers, showing that $\vr{X}{r}$ is $(2k+2)$-conic and hence $k$-connected.\footnote{See~\cite[Corollary~4.4]{adams2024connectivity} for the case using \emph{open} Vietoris--Rips complexes with the $\diam(\sigma)<r$ convention (and hence open balls in $X$), instead of the \emph{closed} Vietoris--Rips complexes with the $\diam(\sigma)\le r$ convention (and hence closed balls in $X$) as we use here.}
Taking $k=1$ and $X=T_{n,n}$, we see that if any $4$ closed balls in $T_{n,n}$ of radius $r$ intersect, then $\vr{T_{n,n}}{r}$ is simply connected.

The graph $T_{5,5}$ has $5^2=25$ vertices, and each closed ball of radius $r=3$ in $T_{5,5}$ contains $21$ vertices.
Therefore, the intersection of any $4$ closed balls of radius $3$ in $T_{5,5}$ contains at least $25-4(25-21)=9$ vertices, and is hence nonempty.
This shows $\vr{T_{5,5}}{3}$ is simply connected, and hence $\vr{T_{5,5}}{3}\simeq \bigvee_9 S^4$.

The graph $T_{7,7}$ has $7^2=49$ vertices, and each closed ball of radius $r=4$ in $T_{7,7}$ contains $37$ vertices.
Therefore, the intersection of any $4$ closed balls of radius $4$ in $T_{7,7}$ contains at least $49-4(49-37)=1$ vertex, and is hence nonempty.
This shows $\vr{T_{7,7}}{4}$ is simply connected, and hence $\vr{T_{7,7}}{4}\simeq S^3$.
We have proven the following theorem.

\begin{theorem}
\label{thm:homotopy-through-homology}
There are homotopy equivalences $\vr{T_{5,5}}{3}\simeq \bigvee_9 S^4$ and $\vr{T_{7,7}}{4}\simeq S^3$.
\end{theorem}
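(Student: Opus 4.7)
The plan is to deduce both homotopy equivalences from integral homology computations together with a simple connectivity check, invoking the standard Hurewicz--Whitehead consequence that a simply connected CW complex whose only nontrivial reduced homology group is free and concentrated in a single degree $n$, say $\tilde H_n(Y)\cong \Z^a$, must be homotopy equivalent to $\bigvee_a S^n$ (cf.\ \cite[Proposition~4C.1]{Hatcher}).

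First I would carry out (or, in the writeup, cite) the integral homology computation with Polymake, verifying that $H_*(\vr{T_{5,5}}{3};\Z)$ agrees with that of $\bigvee_9 S^4$ (i.e.\ $\Z^9$ in degree $4$, zero in all other positive degrees) and that $H_*(\vr{T_{7,7}}{4};\Z)$ agrees with that of $S^3$. Since the $\Z/2$ Betti numbers in Table~\ref{table:Torus-Grids} already match and the integral groups turn out to be torsion-free, this step is purely computational and poses no conceptual difficulty.

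The crux is simple connectedness, which I would establish by a ball-intersection count combined with Barmak's connectivity criterion in the appendix of~\cite{farber2023large}: if every subcomplex on at most $2k+2$ vertices of a simplicial complex $K$ lies in a simplicial cone in $K$, then $K$ is $k$-connected. Applied to a Vietoris--Rips complex $\vr{X}{r}$ with $k=1$, it suffices to show that any four closed balls of radius $r$ in $X$ have a common vertex, since that vertex serves as a cone apex inside $\vr{X}{r}$. For $X=T_{5,5}$ with $r=3$, each closed ball of radius $3$ contains $21$ of the $25$ vertices, so the complement of each ball has $4$ vertices, and any four balls have intersection of size at least $25-4\cdot 4=9>0$. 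For $X=T_{7,7}$ with $r=4$, each closed ball contains $37$ of the $49$ vertices, so any four balls intersect in at least $49-4\cdot 12=1$ vertex. Hence both $\vr{T_{5,5}}{3}$ and $\vr{T_{7,7}}{4}$ are simply connected.

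Finally, combining simple connectedness with the integral homology calculation, the Hurewicz and Whitehead theorems yield the desired homotopy equivalences. The main (and only real) obstacle is the homology computation itself, which has to be done by computer; the topological conclusion, once the ball-intersection bookkeeping is done, is immediate. I would keep the writeup short, emphasizing the ball-counting numbers as the key concrete input.
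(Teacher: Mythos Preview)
Your proposal is correct and follows essentially the same approach as the paper: integral homology computations via Polymake, simple connectedness via Barmak's conic criterion from the appendix of~\cite{farber2023large} applied with the identical ball-counting bounds ($25-4\cdot4=9$ and $49-4\cdot12=1$), and then Hurewicz--Whitehead via \cite[Proposition~4C.1]{Hatcher}. The paper's argument is virtually indistinguishable from what you have written.
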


\section{Conclusion}
\label{sec:conclusion}

We end with a list of open questions.

\begin{question}
The smallest complexes that we have not proven anything about (see Table~\ref{table:Torus-Grids}) are of the form $\vr{T_{n,n}}{3}$, i.e.\ the Vietoris---Rips complexes of $n\times n$ grids on the torus at scale parameter $3$, for $n=6$ and $7$.
Homology computations show that $\vr{T_{6,6}}{3}$ has $\beta_3=1$ and $\beta_5=12$, and that $\vr{T_{7,7}}{3}$ has $\beta_3=1$ and $\beta_4=14$.
What are the homotopy types of these spaces?
\end{question}

\begin{question} What are the homotopy types of $\vr{T_{6,6}}{4}$, $\vr{T_{7,7}}{4}$, $\vr{T_{8,8}}{4}$, $\vr{T_{9,9}}{4}$, and $\vr{T_{10,10}}{4}$?
In particular, is $\vr{T_{7,7}}{4}$ homotopy equivalent to a $3$-sphere?
\end{question}

\begin{question}
Is it possible to determine in what dimensions $i$ the reduced homology groups $\tilde{H}_i(\vr{T_{n,n}}{k})$ are nonzero?
\end{question}

\begin{question}
For $k\ge 3$, is it the case that $\tilde{H}_i(\vr{T_{3k-2,3k-2}}{k})$ is nonzero if and only if $i=3,4$?

For $k\ge 5$, is 
\[\beta_3(\vr{T_{3k-2,3k-2}}{k})=
\begin{cases}
6k-3&\text{if }k\text{ is odd} \\
6k-1&\text{if }k\text{ is even?}
\end{cases}\]

For $k\ge 5$, is 
\[\beta_4(\vr{T_{3k-2,3k-2}}{k})=
\begin{cases}
6k-4&\text{if }k\text{ is odd} \\
6k-2&\text{if }k\text{ is even?}
\end{cases}\]
\end{question}

\begin{question}
Is it true that for $k\geq 7$, the complex $\vr{T_{3k-3,3k-3}}{k}$ has
\[
\beta_3(\vr{T_{3k-3,3k-3}}{k})=3
\quad\text{and}\quad
\beta_4(\vr{T_{3k-3,3k-3}}{k})=2?
\]
\end{question}

\begin{question}
Is it true that for $k\geq 2$, the complex $\vr{T_{2k, 2k}}{k}$ is homotopy equivalent to the wedge sum of $S^3$ with a $4k$-fold wedge sum of $S^{2k-1}$?
\end{question}

\begin{question}
How can Theorem~\ref{thm:homotopy-through-homology} be proven without relying on integral homology computations?
\end{question}

\begin{question}
\label{ques:3sphere}
We conjecture that $\vr{T_{n,n}}{k}$ is homotopy equivalent to a $3$-sphere for a countable family of $(n,k)$ pairs, definitely including $(n,k)=(7,4)$ (see Theorem~\ref{thm:homotopy-through-homology}), and potentially including a subset of
\begin{align*}
(n,k)\in\ &\{(8,5),(9,5)\}\cup\{(11,6),(12,6)\} \\
&\cup\{(n,7)~|~12\le n\le 15\} \\
&\cup\{(n,8)~|~13\le n\le 19\} \\
&\cup\{(n,9)~|~16\le n\le 21\} \\
&\cup\{(n,10)~|~17\le n\le 24\} \\
&\cup\{(n,11)~|~18\le n\le 26\}.
\end{align*}
We conjecture that these complexes are homotopy equivalent to a 3-sphere $S^3$ that is formed from the hollow torus $S^1 \times S^1$ by gluing in two solid tori (as the scale increases), in order to obtain the $3$-sphere as the standard genus-1 Heegaard decomposition $S^3=(S^1\times D^2)\cup_{S^1\times S^1}(D^2 \times S^1)$.
\end{question}

\begin{question}
For any $n$ and for any $k\le k'$, is the rank of the map $H_3(\vr{T_{n,n}}{k})\to H_3(\vr{T_{n,n}}{k'})$ induced by inclusion equal to
\[\min\{\beta_3(\vr{T_{n,n}}{k}),\beta_3(\vr{T_{n,n}}{k'})\}?\]
In other words, are the $3$-dimensional persistent homology groups as large as they can be (given the $3$-dimensional Betti numbers)?
\end{question}

\begin{question}
We note that $H_4(\vr{T_{10,10}}{4})$ has rank $60$, $H_4(\vr{T_{10,10}}{5})$ has rank $0$, and $H_4(\vr{T_{10,10}}{6})$ has rank $39$.
(As a result, the rank of the map $H_4(\vr{T_{10,10}}{4})\to H_4(\vr{T_{10,10}}{6})$ induced by inclusion is zero.)
When is it possible for the rank of homology to be non-monotonic as the scale increases?
\end{question}

\begin{question}
Is it true that for $k\geq 0$, the complex $\vr{T_{7+5k,7+5k}}{3+2k}$ has \\$\beta_4(\vr{T_{7+5k,7+5k}}{3+2k})=14+10k$?

Tables~\ref{table:Torus-Grids} and~\ref{table:cont} confirm this for $0\le k\le 4$.
\end{question}

\begin{question}
Is it true that for $k\geq 3$, the complex $\vr{T_{5k,5k}}{2k}$ has $\beta_3(\vr{T_{5k,5k}}{2k})=1$ and $\beta_4(\vr{T_{5k,5k}}{2k})=10k^2$?

Tables~\ref{table:Torus-Grids} and~\ref{table:cont} confirm this for $3\le k\le 5$.
\end{question}

\begin{question}
Is it true that $\vr{T_{n,n}}{k}$ is not contractible for
\[
k <
\begin{cases}
n & \text{if }n\text{ is even} \\
n-1 & \text{if }n\text{ is odd?}
\end{cases}
\]
\end{question}


\section{Acknowledgments}
We thank Micha{\l} Adamaszek for help with an earlier version of Table~\ref{table:Torus-Grids}, and \v{Z}iga Virk for an idea related to Theorem~\ref{thm:S2S3}.

\bibliographystyle{plain}
\bibliography{references}

\appendix
\section{}

The following result about maximal simplices is referenced in Section~\ref{ssec:maximal}.

\begin{lemma}
\label{lemma:appendix}
We have
\[
M(\vr{C_{n}}{k}) = 
\begin{cases}
\{\sigma_i: i=0, \ldots, n-1\} &\text{for }n>3k, \\
\{\sigma_i:i=0, \ldots, n-1\} \cup \{\sigma_i': i=0, \ldots, n-1\} &\text{for }n=3k, k\geq 2, \\
\{\sigma_i:i=0, \ldots, n-1\} \cup \{\sigma_i'': i=0, \ldots, n-1\} &\text{for }n=3k-1, k\geq 3,
\end{cases}
\]
where  $\sigma_i=\{v_i, v_{i+1},\ldots, v_{i+k}\}$,  $\sigma_i'=\{v_i, v_{i+k}, v_{i+2k}\}$,  and $\sigma_i''=\{v_i, v_{i+k}, v_{i+2k-1}, v_{i+2k}\}$, with all indices taken modulo $n$.
\end{lemma}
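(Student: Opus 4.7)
The strategy is to parametrize each simplex $\sigma = \{v_{i_1}, \ldots, v_{i_s}\} \subseteq V(C_n)$, listed in cyclic order, by its \emph{gap sequence} $(g_1, \ldots, g_s)$ where $g_j \equiv i_{j+1} - i_j \pmod{n}$ (indices of $v$ modulo $n$), so that $\sum_j g_j = n$. The cycle distance between $v_{i_p}$ and $v_{i_q}$ equals the minimum of the two arc sums joining them around the cycle, so $\diam(\sigma) \le k$ if and only if every contiguous gap-sum around the cycle lies in $[1,k] \cup [n-k, n-1]$. First, a direct pairwise-distance check shows $\sigma_i$, $\sigma_i'$, $\sigma_i''$ are simplices of diameter exactly $k$. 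Maximality is also a short verification: for $\sigma_i'$ in $C_{3k}$, any candidate additional vertex $v$ lies strictly in one of the three length-$k$ arcs between consecutive vertices of $\sigma_i'$, and a quick computation shows $v$ is at cycle distance $>k$ from the opposite vertex of $\sigma_i'$; a parallel case check (using $k \ge 3$) handles $\sigma_i''$.

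For the converse direction, fix a maximal simplex $\sigma$ with gap sequence $(g_1, \ldots, g_s)$ and set $g_{\max} = \max_j g_j$. \emph{Case A:} If $g_{\max} \ge n - k$, then $\sigma$ is contained in the arc of length $n - g_{\max} \le k$ complementary to the maximum gap, so $\sigma \subseteq \sigma_j$ for some $j$; maximality forces $\sigma = \sigma_j$. \emph{Case B:} If $g_{\max} < n - k$, then the constraint $\min(g_j, n - g_j) \le k$ from adjacent pairs of $\sigma$, combined with $g_j \le g_{\max} < n - k$, forces $g_j \le k$ for every $j$.

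In Case B, combine $\sum_j g_j = n$, the bound $g_j \le k$, and the diagonal-pair constraints (namely, for every pair $(v_{i_p}, v_{i_q})$ the two complementary arc sums must have minimum $\le k$) to enumerate possibilities. For $n > 3k$: $s \ge 4$ is forced; the pair $(v_{i_1}, v_{i_3})$ yields (WLOG) $g_1 + g_2 \le k$, and the pair $(v_{i_1}, v_{i_4})$ then forces $g_4 + \cdots + g_s \le k$, whence $g_1 + g_2 + g_3 = n - (g_4 + \cdots + g_s) \ge n - k > 2k$, contradicting $g_1 + g_2 + g_3 \le k + k = 2k$. For $n = 3k$: the case $s = 3$ forces $g_1 = g_2 = g_3 = k$, giving $\sigma = \sigma_j'$; $s \ge 4$ is ruled out by an analogous diagonal-pair argument. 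For $n = 3k-1$: the case $s = 3$ produces a simplex of gap-sequence $(k, k, k-1)$ up to cyclic rotation, which is \emph{not} maximal since the adjacent vertex $v_{i_3 + 1}$ may be adjoined to produce a $\sigma_j''$; the case $s = 4$ leaves, after imposing both diagonal constraints, only the gap sequence $(k, k-1, 1, k-1)$ up to cyclic rotation, giving $\sigma = \sigma_j''$; and $s \ge 5$ is ruled out by the same diagonal-pair reasoning.

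The main obstacle will be organizing the $n = 3k-1$ case cleanly: one must enumerate all four-element gap sequences satisfying both diagonal constraints (verifying that they reduce, up to dihedral symmetry of the 4-cycle of gaps, to a single pattern), explicitly verify the extension of each three-element simplex to a $\sigma_j''$ to certify non-maximality, and patiently rule out $s \ge 5$ by a careful multi-case examination of non-adjacent pair distances. The computations are elementary but require bookkeeping to avoid missing sub-cases.
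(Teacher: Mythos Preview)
Your gap-sequence parametrization is a genuinely different and in several respects cleaner route than the paper's argument, which instead fixes a vertex $v_j\in\sigma$, locates the vertex $v_t\in\sigma$ closest to $v_{j-k}$, and argues geometrically that $\sigma$ must lie in the arc of $v_t$. Your combinatorial encoding makes the Case~A/Case~B split transparent and treats all three regimes uniformly; the paper's method is more ad hoc but avoids the small-$s$ enumerations you flag at the end.

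Two concrete repairs are needed. First, your $n>3k$ Case~B argument is garbled: from the pair $(v_{i_1},v_{i_4})$ you do \emph{not} get $g_4+\cdots+g_s\le k$. Since $g_1+g_2\le k$ and $g_3\le k$ give $g_1+g_2+g_3\le 2k$, the complementary arc satisfies $g_4+\cdots+g_s\ge n-2k>k$, which forces the \emph{other} alternative $g_1+g_2+g_3\le k$, and no contradiction has yet been reached. A clean fix: in Case~B the partial sums $0,g_1,g_1+g_2,\ldots,n$ increase in steps of size at most $k$, and since the open interval $(k,n-k)$ has length $n-2k>k$ it cannot be jumped over; any partial sum landing in $(k,n-k)$ exhibits a pair of vertices of $\sigma$ at distance $>k$. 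Second, for $n=3k-1$ and $s=3$, the vertex $v_{i_3+1}$ does \emph{not} extend the triangle with your stated gap sequence $(k,k,k-1)$: it sits at distance $\min(k+1,2k-2)=k+1$ from $v_{i_2}$ when $k\ge 3$. The correct extension is $v_{i_3-1}$ (giving exactly $\sigma_{i_1}''$), or equivalently relabel so that the short gap is $g_2=k-1$ and then $v_{i_3+1}$ does work. With these two fixes, and the bookkeeping for $s\ge 5$ you already anticipate, the proposal goes through.
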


\begin{proof}
When $n>3k$, it is clear that the simplices of type $\sigma_i$ are maximal.
So, we need only to verify that there are no other facets.

Let $\sigma\in M(\vr{C_n}{k})$.
We define the \emph{arc} of a vertex $v_i$ in $C_n$ as the set of $k+1$ consecutive vertices starting at $v_i$, i.e., $\{v_i,v_{i+1},\ldots, v_{i+k}\}$.
Fix a vertex $v_j\in\sigma$, and note that all vertices in $\sigma$ must lie in $B_{C_n}[v_j,k]$.
In this proof, for convenience, we use $B[v,k]$ to denote $B_{C_n}[v,k]$ for any vertex $v\in C_n$ and $k\geq 0$.
Let $v_t\in\sigma$ be such that $d(v_t, v_{j-k})=\min\{d(v_i,v_{j-k}): v_i\in\sigma\}$.
We claim that every vertex in $\sigma$ must be contained in the arc of $v_t$.
To verify this, we have two cases to check: $v_t=v_j$ and $v_t\neq v_j$.

\begin{figure}[h]
\begin{center}
\begin{tikzpicture}
\def \n {10} 
\def \radius {1.6cm} 

\foreach \s in {1,...,\n}
{
  \node[draw, circle, fill=black, inner sep=2pt, minimum size=8pt] 
    (v\s) at ({360/\n * (\s - 1)}:\radius) {};
  
  \foreach \s in {5,6,7,8,9,10,1}{
  \node[draw, circle, fill=red, inner sep=2pt, minimum size=8pt] 
  at ({360/\n * (\s- 1)}:\radius) {};
  }

  \node[font=\small] at ({360/\n * (\s - 1)}:\radius + 0.5cm) {$v_{\s}$};
 }
 \draw[red, thick] ({360/\n * (8 - 1)}:\radius) circle (0.27cm);
 \draw[blue, thick] ({360/\n * (6 - 1)}:\radius) circle (0.27cm);
 \foreach \s in {1,...,\n}
 {
  \pgfmathtruncatemacro{\next}{mod(\s, \n) + 1}
  \draw (v\s) -- (v\next);
 }

\draw[red, thick] (v8) -- (v7);
\draw[red, thick] (v8) -- (v9);
\draw[red, thick] (v9) -- (v10);
\draw[red, thick] (v10) -- (v1);
\draw[red, thick] (v6) -- (v7);
\draw[red, thick] (v5) -- (v6);

\begin{scope}[xshift=5.5cm]
\def \n {10} 
\def \radius {1.6cm} 

\foreach \s in {1,...,\n}
{
  \node[draw, circle, fill=black, inner sep=2pt, minimum size=8pt] 
    (v\s) at ({360/\n * (\s - 1)}:\radius) {};

  \node[font=\small] at ({360/\n * (\s - 1)}:\radius + 0.5cm) {$v_{\s}$};
 }
 \draw[blue, thick] ({360/\n * (6 - 1)}:\radius) circle (0.27cm);
 \foreach \s in {1,...,\n}
 {
  \pgfmathtruncatemacro{\next}{mod(\s, \n) + 1}
  \draw (v\s) -- (v\next);
 }

   \foreach \s in {6,7,8,9}{
  \node[draw, circle, fill=blue, inner sep=2pt, minimum size=8pt] 
  at ({360/\n * (\s- 1)}:\radius) {};
  }

\draw[blue, thick] (v6) -- (v7);
\draw[blue, thick] (v7) -- (v8);
\draw[blue, thick] (v8) -- (v9);

\end{scope}

\end{tikzpicture}
\caption{Visualization of $n=10$ case where $v_6$ is the vertex in $\sigma$ closest to $v_{j-k}=v_{8-3}=v_5$.
The left diagram highlights $B[v_8,3]$ in red, while the right diagram displays the arc of $v_6$ in blue.}
\end{center}
\end{figure}

Suppose $v_t=v_j$.
For the sake of contradiction, assume that $\sigma$ contains a vertex $v_s$ not in the arc of $v_t$.
Then $v_s\in B[v_j,k]$ and $v_s\not\in\{v_{j-k}, v_{j-k+1},\ldots, v_{j-1}\}$ as $v_t$ is the closest point in $\sigma$ to $v_{j-k}$.
Hence, $v_s\not\in B[v_j,k]$ as $v_s\not\in\{v_{j-k}, v_{j-k+1},\ldots, v_{j-1}\}\cup\{v_j,v_{j+1},\ldots, v_{j+k}\}$, contradicting that $v_s\in B[v_j,k]$.
Hence, $\sigma$ must be of the form $\sigma_i$.

Suppose $v_t\neq v_j$.
It is easy to see that $v_t\in\{v_{j-k},\ldots, v_{j-1}\}$ as otherwise $d(v_t, v_{j-k})>d(v_j, v_{j-k})=k$.
Again assume that $\sigma$ contains a vertex $v_s$ not in the arc of $v_t$.
Since $v_t$ is the vertex in $\sigma$ closest to $v_{j-k}$, then $v_s\in B[v_j,k]\backslash\{v_{j-k}, v_{j-k+1}, \ldots, v_{t+k}\}$.
We claim that $d(v_t,v_s)>k$.
Note that $d(v_s,v_t)$ is the minimum of the distances from $v_s$ to $v_t$ measured in the clockwise and counterclockwise directions.
The counterclockwise distance passing through $v_j$ is clearly greater than $k$ since $v_s$ is not in the arc of $v_t$.
Moreover, the clockwise distance is greater than $k$ as the distance between the endpoints of $B[v_j, k]$ is minimally $k+1$.
Thus, in either case $d(v_t, v_s)>k$, contradicting that $v_s\in\sigma$.
Consequently, all $\sigma\in M(\vr{C_n}{k})$ must be of the form $\sigma_i$.

\begin{figure}[h]
\begin{center}
\begin{tikzpicture}
\def \n {10} 
\def \radius {1.6cm} 
\def \arcshift {0.8cm}

\pgfmathsetmacro{\startAngle}{360/\n * (1 - 1)}
\pgfmathsetmacro{\endAngle}{360/\n * (6 - 1)}

\foreach \s in {1,...,\n}
{
  \node[draw, circle, fill=black, inner sep=2pt, minimum size=8pt] 
    (v\s) at ({360/\n * (\s - 1)}:\radius) {};

  \node[font=\small] at ({360/\n * (\s - 1)}:\radius + 0.5cm) {$v_{\s}$};
}

\foreach \s in {5,10,1}
{
  \node[draw, circle, fill=red, inner sep=2pt, minimum size=8pt] 
    at ({360/\n * (\s - 1)}:\radius) {};
}

\foreach \s in {6,7,8,9}
{
  \node[draw, circle, fill=blue, inner sep=2pt, minimum size=8pt] 
    at ({360/\n * (\s - 1)}:\radius) {};
}

\draw[green, thick] ({360/\n * (10 - 1)}:\radius) circle (0.27cm);
\draw[green, thick] ({360/\n * (1 - 1)}:\radius) circle (0.27cm);

\foreach \s in {1,...,\n}
{
  \pgfmathtruncatemacro{\next}{mod(\s, \n) + 1}
  \draw (v\s) -- (v\next);
}

\draw[red, thick] (v9) -- (v10);
\draw[red, thick] (v10) -- (v1);
\draw[red, thick] (v5) -- (v6);

\draw[blue, thick] (v6) -- (v7);
\draw[blue, thick] (v7) -- (v8);
\draw[blue, thick] (v8) -- (v9);

\def \arcshift {0.8cm} 

\pgfmathsetmacro{\midAngle}{(\startAngle+\endAngle)/2}

\draw[thick,|-|]
    ({\endAngle}:\radius + \arcshift) 
    arc[start angle={\endAngle}, 
        end angle={\startAngle}, 
        x radius={\radius + \arcshift}, 
        y radius={\radius + \arcshift}];

\node[font=\tiny, above, rotate={\midAngle-90}] 
    at ({\midAngle}:{\radius + \arcshift + 0.02cm}) 
    {$k+2$};

\begin{scope}[xshift=6cm]
\def \n {10} 
\def \radius {1.6cm} 
\def \arcshift {0.8cm}

\pgfmathsetmacro{\startAngle}{360/\n * (6 - 1)}
\pgfmathsetmacro{\endAngle}{360/\n * (10 - 1)}

\foreach \s in {1,...,\n}
{
  \node[draw, circle, fill=black, inner sep=2pt, minimum size=8pt] 
    (v\s) at ({360/\n * (\s - 1)}:\radius) {};

  \node[font=\small] at ({360/\n * (\s - 1)}:\radius + 0.5cm) {$v_{\s}$};
}

\foreach \s in {5,10,1}
{
  \node[draw, circle, fill=red, inner sep=2pt, minimum size=8pt] 
    at ({360/\n * (\s - 1)}:\radius) {};
}

\foreach \s in {6,7,8,9}
{
  \node[draw, circle, fill=blue, inner sep=2pt, minimum size=8pt] 
    at ({360/\n * (\s - 1)}:\radius) {};
}

\draw[green, thick] ({360/\n * (10 - 1)}:\radius) circle (0.27cm);
\draw[green, thick] ({360/\n * (1 - 1)}:\radius) circle (0.27cm);

\foreach \s in {1,...,\n}
{
  \pgfmathtruncatemacro{\next}{mod(\s, \n) + 1}
  \draw (v\s) -- (v\next);
}

\draw[red, thick] (v9) -- (v10);
\draw[red, thick] (v10) -- (v1);
\draw[red, thick] (v5) -- (v6);

\draw[blue, thick] (v6) -- (v7);
\draw[blue, thick] (v7) -- (v8);
\draw[blue, thick] (v8) -- (v9);

\def \arcshift {0.8cm} 

\pgfmathsetmacro{\midAngle}{(\startAngle+\endAngle)/2}

\draw[thick,|-|]
    ({\endAngle}:\radius + \arcshift) 
    arc[start angle={\endAngle}, 
        end angle={\startAngle}, 
        x radius={\radius + \arcshift}, 
        y radius={\radius + \arcshift}];

\node[font=\tiny, above, rotate={\midAngle+90}] 
    at ({\midAngle}:{\radius + \arcshift + 0.5cm}) 
    {$k+1$};

\end{scope}

\end{tikzpicture}
\caption{Continuation of the above case, illustrating the minimal distances in both the clockwise and counterclockwise directions from $v_6$ to vertices in $B[v_8,3]$ that are not in the arc of $v_6$.
These vertices are highlighted with green outlines for clarity.}
\end{center}
\end{figure}

When $n=3k$, there are two types of facets.
Denote $\sigma_i'=\{v_i, v_{i+k}, v_{i+2k}\}$, with all indices taken modulo $n$.
We claim
$M(\vr{C_{n}}{k})=\{\sigma_i:i=0, 1,\ldots, n-1\} \cup \{\sigma_i': i=0, 1,\ldots, n-1\}$.

Once again, it is evident that each $\sigma_i$ is maximal in this case.

We next show that each $\sigma_i'$ is maximal.
Let $\sigma_i'=\{v_i, v_{i+k}, v_{i+2k}\}$.
Suppose that $\sigma_i'$ is not maximal.
Then it must be contained in a larger simplex, say $\tau$.
Let $v_j\in \tau\backslash\sigma_i'$.
Then $v_j$ is between some pair of vertices in $\sigma_i'$.
Without loss of generality, suppose $v_j$ is contained in the arc between $v_i$ and $v_{i+k}$.
It is clear that $d(v_j,v_{i+2k})\geq k+1$, contradicting that $\tau\in \vr{C_n}{k}$.
Hence, the simplices of the form $\sigma_i'$ are maximal.

Therefore, it suffices to show that any facet not of the form $\sigma_i$ must be of the $\sigma_i'$ type.
Let $\sigma\in M(\vr{C_n}{k})$, and fix a vertex $v_j\in\sigma$.
Next, choose $v_t\in\sigma$ such that $d(v_t, v_{j-k})=\min\{d(v_i,v_{j-k}): v_i\in\sigma\}$.
We claim that if $d(v_{t}, v_j)<k$, then $\sigma$ is of the form $\sigma_i$.
Otherwise, if $d(v_{t}, v_j)=k$, $\sigma$ is either of type $\sigma_i'$ or $\sigma_i$.

For the first assertion, if $0<d(v_{t}, v_j)<k$, the argument is identical to the $n>3k$ case.
If instead $d(v_{t}, v_j)=0$, then $\sigma\subseteq\{v_j, v_{j+1},\ldots, v_{j+k}\}$, and since $\sigma$ is maximal, $\sigma$ must be of the form $\sigma_i$.

For the second assertion, suppose $d(v_{t}, v_j)=k$.
We have the following two cases to check: $v_t=v_{j-k}$ or $v_t=v_{j+k}$.

If $v_t=v_{j+k}$, then $k=\min\{d(v_i,v_{j-k}): v_i\in\sigma\}$.
This forces $\sigma$ to be of the form $\sigma_i$ as $\sigma\subseteq \{v_j, v_{j+1},\ldots, v_{j+k}\}$.

Suppose $\sigma$ is not of the form $\sigma_i$, and let $v_s\in\sigma\backslash\{v_t, v_j\}$.
If $v_t=v_{j-k}$, then we claim $v_s\in\{v_{j+1},\ldots, v_{j+k}\}$.
Suppose instead $v_s\in\{v_{j-k+1},\ldots, v_{j-1}\}$.
If $k=2$, then $v_s=v_{j-1}$, which yields $\sigma=\{v_{j-2},v_{j-1}, v_j\}$, a contradiction.
If $k\geq 3$, then since $\sigma$ is not of the form $\sigma_i$, simplex $\sigma$ must contain some $v_i\in \{v_{j+1},\ldots, v_{j+k}\}$.
The only $v_i\in \{v_{j+1},\ldots, v_{j+k}\}$ that is within $k$ of both $v_j$ and $v_{j-k}$ is $v_{j+k}$.
However, $d(v_s, v_{j+k})>k$, a contradiction.
Hence, $v_s\in \{v_{j+1},\ldots, v_{j+k}\}$.
Moreover, it is clear that $v_s=v_{j+k}$ as otherwise $d(v_{j-k}, v_s)>k$.
Hence, $\sigma$ is of type $\sigma_i'$.

\begin{figure}[h]
\begin{center}
\begin{tikzpicture}
\def \n {9} 
\def \radius {1.6cm} 
\def \arcshift {0.8cm}

\pgfmathsetmacro{\startAngle}{360/\n * (1 - 1)}
\pgfmathsetmacro{\endAngle}{360/\n * (6 - 1)}

\foreach \s in {1,...,\n}
{
  \node[draw, circle, fill=black, inner sep=2pt, minimum size=8pt] 
    (v\s) at ({360/\n * (\s - 1)}:\radius) {};

  \node[font=\small] at ({360/\n * (\s - 1)}:\radius + 0.5cm) {$v_{\s}$};
}

\foreach \s in {4,5,6,7,8,9,1}
{
  \node[draw, circle, fill=red, inner sep=2pt, minimum size=8pt] 
    at ({360/\n * (\s - 1)}:\radius) {};
}

\draw[red, thick] ({360/\n * (7 - 1)}:\radius) circle (0.27cm);
\draw[red, thick] ({360/\n * (4 - 1)}:\radius) circle (0.27cm);
\draw[green, thick] ({360/\n * (8 - 1)}:\radius) circle (0.27cm);
\draw[green, thick] ({360/\n * (9 - 1)}:\radius) circle (0.27cm);
\draw[green, thick] ({360/\n * (0)}:\radius) circle (0.27cm);

\foreach \s in {1,...,\n}
{
  \pgfmathtruncatemacro{\next}{mod(\s, \n) + 1}
  \draw (v\s) -- (v\next);
}

\draw[red, thick] (v9) -- (v1);
\draw[red, thick] (v5) -- (v6);
\draw[red, thick] (v6) -- (v7);
\draw[red, thick] (v7) -- (v8);
\draw[red, thick] (v8) -- (v9);
\draw[red, thick] (v5) -- (v4);

\begin{scope}[xshift=6cm]
\def \n {9} 
\def \radius {1.6cm} 
\def \arcshift {0.8cm}

\pgfmathsetmacro{\startAngle}{360/\n * (1 - 1)}
\pgfmathsetmacro{\endAngle}{360/\n * (4 - 1)}

\foreach \s in {1,...,\n}
{
  \node[draw, circle, fill=black, inner sep=2pt, minimum size=8pt] 
    (v\s) at ({360/\n * (\s - 1)}:\radius) {};

  \node[font=\small] at ({360/\n * (\s - 1)}:\radius + 0.5cm) {$v_{\s}$};
}

\foreach \s in {4,5,6,7,8,9,1}
{
  \node[draw, circle, fill=red, inner sep=2pt, minimum size=8pt] 
    at ({360/\n * (\s - 1)}:\radius) {};
}

\draw[red, thick] ({360/\n * (7 - 1)}:\radius) circle (0.27cm);
\draw[red, thick] ({360/\n * (4 - 1)}:\radius) circle (0.27cm);
\draw[red, thick] ({360/\n * (0)}:\radius) circle (0.27cm);

\foreach \s in {1,...,\n}
{
  \pgfmathtruncatemacro{\next}{mod(\s, \n) + 1}
  \draw (v\s) -- (v\next);
}

\draw[red, thick] (v9) -- (v1);
\draw[red, thick] (v5) -- (v6);
\draw[red, thick] (v6) -- (v7);
\draw[red, thick] (v7) -- (v8);
\draw[red, thick] (v8) -- (v9);
\draw[red, thick] (v5) -- (v4);

\def \arcshift {0.8cm} 

\pgfmathsetmacro{\midAngle}{(\startAngle+\endAngle)/2}

\draw[thick,|-|]
    ({\endAngle}:\radius + \arcshift) 
    arc[start angle={\endAngle}, 
        end angle={\startAngle}, 
        x radius={\radius + \arcshift}, 
        y radius={\radius + \arcshift}];

\node[font=\tiny, above, rotate={\midAngle-90}] 
    at ({\midAngle}:{\radius + \arcshift + 0.02cm}) 
    {$k$};

\draw[blue, thick] (v7) -- (v1);
\draw[blue, thick] (v1) -- (v4);
\draw[blue, thick] (v7) -- (v4);

\end{scope}

\end{tikzpicture}
\caption{Visualization of $n=9$ case where $v_7$ corresponds to $v_j$ and $v_4$ corresponds to $v_{j-k}$.
The left diagram highlights $B[v_7,3]$ in red, along with all potential $v_s\in\{v_{j+1}, \ldots, v_{j+k}\}$, which are highlighted in green.
The right diagram illustrates that $v_1$, which is $v_{j+k}$ in this case, is the only vertex within $k$ of $v_4$.}
\label{fig:9casekball}
\end{center}
\end{figure}

When $n=3k-1$, there are two types of facets.
Denote $\sigma_i''=\{v_i, v_{i+k}, v_{i+2k-1}, v_{i+2k}\}$, with all indices taken modulo $n$.
We claim
$M(\vr{C_{n}}{k})=\{\sigma_i:i=0, 1,\ldots, n-1\} \cup \{\sigma_i'': i=0, 1,\ldots, n-1\}$.

The reasoning for this case follows a similar approach to that outlined in the $n=3k$ case.
So, we only show that the simplices of type $\sigma_i''$ are maximal.

Suppose that some $\sigma_i''$ is not maximal.
Then $\sigma_i''$ must be contained in a larger simplex, say $\tau$.
Let $v_j\in \tau\backslash \sigma_i''$.
Note that $v_j$ must be on an arc between one of the following pairs: $\{v_i,v_{i+k}\}, \{v_{i+k}, v_{i+2k-1}\}$, or $\{v_i,v_{i+2k}\}$.
If $v_j$ is on the arc between $\{v_{i+k}, v_{i+2k-1}\}$, then $d(v_j, v_i)\geq k+1$.
Similarly, if $v_j$ is between $\{v_i,v_{i+2k}\}$, then $d(v_j,v_{i+2k-1})\geq k+1$.
Finally, if $v_j$ is a vertex on the arc between $\{v_i,v_{i+k}\}$, we argue that either $d(v_j, v_{i+2k})>k$ or $d(v_j, v_{i+2k-1})>k$.

Suppose that $d(v_j, v_{i+2k})\leq k$.
Since $d(v_i,v_{i+2k})=k-1$, $d(v_j, v_{i+2k})=k$.
Consequently, $d(v_j, v_{i+2k-1})=k+1$, which contradicts the assumption that $\tau\in\vr{C_n}{k}$.
Therefore, the simplices of the form $\sigma_i''$ are maximal in this case.
\end{proof}

\end{document}